\numberwithin{equation}{section}
\newtheorem{theorem}{theorem}%[section]
\newtheorem{corollary}[theorem]{Corollary}
\newtheorem{lemma}[theorem]{Lemma}
\newtheorem{definition}[theorem]{Definition}
\def\kn{\ceil{\log m_n}}
\numberwithin{equation}{section}
\numberwithin{theorem}{section}
\def\bwn{\beta_W^{(n)}}
\def\bgn{\beta_G^{(n)}}
\def\thetan{\theta^{(n)}}
\def\thetant{\tilde{\theta}^{(n)}}
\def\lw{\lambda_W}
\def\lg{\lambda_G}
\def\muyn{\mu_Y^{(n)}}
\def\e{{\rm e}}
\def\E{{\rm E}}
\def\P{{\rm P}}
\def\zinf{z_{\infty}}
\def\etan{\eta^{(n)}}
\def\tildetn{\tilde{T}^{(n)}}
\def\tildean{\tilde{A}^{(n)}}
\def\barzn{\bar{Z}^{(n)}}
\def\baran{\bar{A}^{(n)}}
\def\bartn{\bar{T}^{(n)}}
\def\tinfn{T^{(n)}_{\infty}}
\def\tildetinfn{\tilde{T}^{(n)}_{\infty}}
\def\tildetinfnep{\tilde{T}^{(n)}_{\infty,\epsilon}}
\def\tildetinfnL{\tilde{T}^{(n,L)}_{\infty}}
\def\tildetinfnU{\tilde{T}^{(n,U)}_{\infty}}
\def\tildetinfnLep{\tilde{T}^{(n,L)}_{\infty,\epsilon_1}}
\def\tildetinfnUep{\tilde{T}^{(n,U)}_{\infty,\epsilon_1}}
\def\checkzn{\check{Z}^{(n)}}
\def\barabn{\bar{A}^{(n)}_{\bullet}}
\def\rnlam{R^{(n)}_{\lambda}}
\def\rnlami{R^{(n)}_{\lambda i}}
\def\rnlamone{R^{(n)}_{\lambda 1}}
\def\rnlamtwo{R^{(n)}_{\lambda 2}}
\def\rnlammn{R^{(n)}_{\lambda m_n}}
\def\bRn{\boldsymbol{R}^{(n)}}
\def\bRbn{\boldsymbol{R}^{(n)}_{\bullet}}
\def\bRbnT{\boldsymbol{R}^{(n)}_{\bullet T}}
\def\brn{\boldsymbol{r}^{(n)}}
\def\brnT{\boldsymbol{r}^{(n)}_T}
\def\bybn{\boldsymbol{Y}^{(n)}_{\bullet}}
\def\br{\boldsymbol{r}}
\def\bb{\boldsymbol{b}}
\def\bW{\boldsymbol{W}}
\def\bY{\boldsymbol{Y}}
\def\bN{\boldsymbol{N}}
\def\png{p^{(n)}_{\gamma}}
\def\Tn{T^{(n)}}
\def\Zn{Z^{(n)}}
\def\zn{z^{(n)}}
\def\zndash{z^{(n)\prime}}
\def\Yn{Y^{(n)}}
\def\Xn{X^{(n)}}
\def\xn{x^{(n)}}
\def\xndash{x^{(n)\prime}}
\def\Wn{W^{(n)}}
\def\Vn{V^{(n)}}
\def\Un{U^{(n)}}
\def\Gn{G^{(n)}}
\def\An{A^{(n)}}
\def\an{a^{(n)}}
\def\andash{a^{(n)\prime}}
\def\Bn{B^{(n)}}
\def\BB{\mathcal{B}}
\def\BBn{\mathcal{B}^{(n)}}
\def\Sn{S^{(n)}}
\def\SSn{\mathcal{S}^{(n)}}
\def\Mn{M^{(n)}}
\def\taun{\tau^{(n)}}
\def\TnL{T^{(n,L)}}
\def\TnU{T^{(n,U)}}
\def\AnU{A^{(n,U)}}
\def\hatZnL{\hat{Z}^{(n,L)}}
\def\checkZnL{\check{Z}^{(n,L)}}
\def\znmt{Z^{(n)}_T}
\def\znma{Z^{(n)}_A}
\def\znmb{Z^{(n)}_B}
\def\znmc{Z^{(n)}_C}
\def\barznmc{\bar{Z}^{(n)}_C}
\def\tznmt{\tilde{Z}^{(n)}_T}
\def\tznma{\tilde{Z}^{(n)}_A}
\def\tznmb{\tilde{Z}^{(n)}_B}
\def\tanm{\tilde{A}^{(n)}}
\def\Enm{\mathcal{E}^{(n,m_n)}}
\def\En{\mathcal{E}^{(n)}}
\def\Po{{\rm Po}}
\def\Bin{{\rm Bin}}
\def\BPo{\BB_{\Po}}
\def\BBin{\BB_{\Bin}}
\def\pipo{\pi_{\Po}}
\def\BBcheck{\check{\BB}}
\def\BBnhat{\hat{\BB}^{(n)}}
\def\BBncheck{\check{\BB}^{(n)}}
\def\Bnhat{\hat{B}^{(n)}}
\def\Ynhat{\hat{Y}^{(n)}}
\def\Wncheck{\check{W}^{(n)}}
\def\Xncheck{\check{X}^{(n)}}
\def\Yncheck{\check{Y}^{(n)}}
\def\ngam{n^{\gamma}}
\def\ngamd{n^{\gamma'}}
\def\ngamdd{n^{\gamma''}}
\def\ngamddd{n^{\gamma'''}}
\def\ngamone{n^{\gamma_1}}
\def\ngamTwo{n^{\gamma_2}}
\def\ngamthree{n^{\gamma_3}}
\def\ngamfour{n^{\gamma_4}}
\def\hatzng{\hat{z}^{(n)}_{\gamma}}
\def\hatwng{\hat{w}^{(n)}_{\gamma}}
\def\qingamt{q^{(n)}(\gamma,t)}
\def\qLgam{q^{(n)}_L(\gamma)}
\def\qUgam{q^{(n)}_U(\gamma)}
\def\chin{\chi^{(n)}}
\def\zbarnc{\bar{Z}^{(n)}(\cdot)}
\def\abarnc{\bar{A}^{(n)}(\cdot)}
\def\zbarnci{\bar{Z}^{(n)}_i(\cdot)}
\def\abarnci{\bar{A}^{(n)}_i(\cdot)}
\def\zbn{Z^{(n)}_{\bullet}}
\def\abn{A^{(n)}_{\bullet}}
\def\xbn{X^{(n)}_{\bullet}}
\def\rbnone{R^{(n)}_{\bullet 1}}
\def\rbntwo{R^{(n)}_{\bullet 2}}
\def\rbnthree{R^{(n)}_{\bullet 3}}
\def\rbnthreeT{R^{(n)}_{\bullet 3T}}
\def\rbni{R^{(n)}_{\bullet i}}
\def\rbniT{R^{(n)}_{\bullet i T}}
\def\rbnj{R^{(n)}_{\bullet j}}
\def\Psib{\Psi_{\bullet}}
\def\barzbn{\bar{Z}^{(n)}_{\bullet}}
\def\barzan{\bar{A}^{(n)}_{\bullet}}
\def\barxbn{\bar{X}^{(n)}_{\bullet}}
\def\barybnL{\bar{Y}^{(n,L)}_{\bullet}}
\def\tildezbn{\tilde{Z}^{(n)}_{\bullet}}
\def\tildeabn{\tilde{A}^{(n)}_{\bullet}}
\def\hatznc{\hat{Z}^{(n)}_C}
\def\convp{\stackrel{{\rm p}}{\longrightarrow}}
\def\convD{\stackrel{{\rm D}}{\longrightarrow}}
\def\convw{\stackrel{{\rm w}}{\longrightarrow}}
\def\eqdist{\stackrel{{\rm D}}{=}}
\def\stlt{\stackrel{{\rm st}}{\le}}
\def\var{{\rm Var}}
\DeclarePairedDelimiter{\ceil}{\lceil}{\rceil}
\DeclarePairedDelimiter{\floor}{\lfloor}{\rfloor}
\newcommand\bzero{\boldsymbol{0}}
\def\kn{\ceil{\log m_n}}
\newcommand{\hfigwidth}{7.1cm}
\begin{document}

\begin{frontmatter}
%%%%%%%%%%%%%%%%%%%%%%%%%%%%%%%%%%%%%%%%%%%%%%
%%                                          %%
%% Enter the title of your article here     %%
%%                                          %%
%%%%%%%%%%%%%%%%%%%%%%%%%%%%%%%%%%%%%%%%%%%%%%
\title{SIR epidemics in populations with large sub-communities}
%\title{A sample article title with some additional note\thanksref{T1}}
\runtitle{Epidemics in populations with large communities}
%\thankstext{T1}{A sample of additional note to the title.}

\begin{aug}
%%%%%%%%%%%%%%%%%%%%%%%%%%%%%%%%%%%%%%%%%%%%%%%
%% Only one address is permitted per author. %%
%% Only division, organization and e-mail is %%
%% included in the address.                  %%
%% Additional information can be included in %%
%% the Acknowledgments section if necessary. %%
%%%%%%%%%%%%%%%%%%%%%%%%%%%%%%%%%%%%%%%%%%%%%%%

\author[A]{\fnms{Frank} \snm{Ball}\ead[label=e1]{frank.ball@nottingham.ac.uk}},
\author[A]{\fnms{David} \snm{Sirl}\ead[label=e2,mark]{david.sirl@nottingham.ac.uk}}
\and
\author[B]{\fnms{Pieter} \snm{Trapman}\ead[label=e3,mark]{j.p.trapman@rug.nl}}
%%%%%%%%%%%%%%%%%%%%%%%%%%%%%%%%%%%%%%%%%%%%%%
%% Addresses                                %%
%%%%%%%%%%%%%%%%%%%%%%%%%%%%%%%%%%%%%%%%%%%%%%
\address[A]{School of Mathematical Sciences, University of Nottingham, \printead{e1,e2}}

\address[B]{Department of Mathematics, Stockholm University;  current affiliation: Bernoulli Institute,
University of Groningen \printead{e3}}
\end{aug}

\begin{abstract}
We investigate final outcome properties of an SIR (susceptible $\to$ infective $\to$ recovered) epidemic model defined on a population of large sub-communities in which there is stronger disease transmission within the communities than between them. Our analysis involves approximation of the epidemic process by a chain of within-community large outbreaks spreading between the communities. We derive law of large numbers and central limit type results for the number of individuals and the number of communities affected and the so-called severity of the outbreak. These results are valid as the size of communities tends to infinity, with the number of communities either fixed or also tending to infinity. The weaker between-community connections lead to randomness even in the law of large numbers type limit. As part of our proofs we also obtain a new result concerning the rate of convergence of the expected fraction infected in a standard SIR epidemic to its large-population limit.
\end{abstract}

\begin{keyword}[class=MSC]
\kwd[Primary ]{92D30}
%\kwd{???}
\kwd[; secondary ]{60F99}
\kwd{60K99}
\end{keyword}

\begin{keyword}
\kwd{Stochastic epidemic}
\kwd{Final size}
\kwd{Basic reproduction number}
\kwd{Branching process}
\kwd{Coupling}
\kwd{Embedded process}
\kwd{Weak convergence}
\kwd{Central limit theorem}
\end{keyword}

\end{frontmatter}
%%%%%%%%%%%%%%%%%%%%%%%%%%%%%%%%%%%%%%%%%%%%%%
%% Please use \tableofcontents for articles %%
%% with 50 pages and more                   %%
%%%%%%%%%%%%%%%%%%%%%%%%%%%%%%%%%%%%%%%%%%%%%%
%\tableofcontents

%%%%%%%%%%%%%%%%%%%%%%%%%%%%%%%%%%%%%%%%%%%%%%
%%%% Main text entry area:
\section{Introduction}
\label{intro}

Stochastic epidemic models are mathematically interesting as well as being important tools for understanding past outbreaks, and both predicting and mitigating future outbreaks of infectious disease. Of course any mathematical model contains simplifications and assumptions.  In epidemic models which admit any significant mathematical tractability some of the less realistic assumptions revolve around the way the population is structured. Such structure can be introduced into epidemic models in myriad ways, for example by including households (or similar small groups) within which individuals transmit infection more readily than within the population at large, multiple `types' (e.g.\ age classes or other risk groups) of individuals which mix with each other with strengths depending upon the types involved, network models which explicitly model the (social) network structure of the population under consideration, and spatial models, to name but some of the possible structures that may be included.

In this paper we consider a stochastic SIR (susceptible $\to$ infective $\to$ recovered) epidemic within a population consisting of several large communities of individuals. Individuals are relatively strongly connected within the communities and more weakly connected to individuals in different communities. It strongly resembles the `multi-type' SIR epidemic (e.g.\ \citet[Sections~6.1--6.2]{AndBri2000}), but assumes weaker transmission of infection between types (i.e.\ between `communities' in our language). In particular, we assume scalings for the infection parameters such that a `large outbreak' within one community `seeds' outbreaks in other communities with probability strictly between 0 and 1. These outbreaks may be minor and play no significant role in the population at large or they may be major and possibly seed outbreaks in further communities. We investigate final outcome properties of this epidemic model and present results that are in some sense analogous to law of large numbers (LLN) and central limit theorem (CLT) results for simpler epidemic models, but differ on account of the weaker between-community transmission leading to more randomness in the limit random quantities. The method we use to go from the within-community level to the between-community level resembles renormalization methods used in long-range percolation theory \cite[e.g.][]{AizNew1986,DawGor2013}.  %As discussed in more detail when we describe the infection dynamics of
%our model in Section~\ref{sec:modelAndAsymp}, although our model and analysis is in an SIR
%framework it readily extends to the SEIR framework which also includes
%an exposed/latent phase.
For ease of presentation, we present our results within the framework of an SIR model, though, as our focus is on the final outcome of an epidemic, the analysis applies also to SEIR (susceptible $\to$ exposed $\to$ infective $\to$ recovered) models; see Section~\ref{sec:modelAndAsymp}, where this and other extensions are discussed.

Our model may be seen as modelling another version of `clumping' as discussed by~\cite{BlaHouKeeRos2014} which, like the models they investigate, allows for more variability in outcomes than standard homogeneously mixing models.
The model is also closely related to the `epidemic among giants' of~\citet[Section~2.4]{Ball97} and the discussion at the end of section 4.3 of that paper; but that model considers only Reed-Frost epidemic dynamics (see Section \ref{sec:multiComm}, 3rd paragraph) and here we provide much more detailed and complete results. The stochastic multi-type model with weaker transmission between types than within types goes back at least to~\cite{Watson1972}, who cites related models in publications from the late 1950s. The idea of a population of communities with relatively strong within-community links and weaker between-community links is similar in spirit to some motivations for the Stochastic Block Model or planted partition model (see e.g.\ \citet[Example~4.3]{BolJanRio2007} in the probabilistic literature or \citet{Abbe2018} in the networks and community detection literature); though in that context the strength of between- and within-community connections are usually, but not always, assumed to scale with population size in the same way as each other (as is the case in the usual multi-type epidemic model).

There is a substantial literature on CLTs for the final outcome of stochastic SIR epidemics, going back to \cite{NagStartsev68,NagStartsev70}, which consider the final size of the so-called general stochastic epidemic (a single-population model with exponentially distributed infectious periods -- see \citet[Chapter 6]{Bailey75}) when the initial numbers of susceptibles $n$ and infectives $m$ both tend to infinity.  In the practically more relevant situation, when $m$ is held fixed and $n \to \infty$, a major outbreak which infects a strictly positive fraction of the population occurs with non-zero probability only if the basic reproduction number $R_0$ (see~\eqref{R0def} in Section~\ref{sec:singleComm}) is strictly greater than one.  The first CLT to cover this case, conditional on the occurrence of a major outbreak, was given in \cite{Bahr80}. A method of proving CLTs for the final outcome of an SIR epidemic based on an embedding representation was introduced in \cite{Scal85,Scal90}.  This method is very powerful and has been extended to, for example, multitype epidemics \citep{BallClancy93} and both single and multitype epidemics among a community of households (\cite{Ball97} and \cite{BallLyne01}, respectively).  We use a novel extension of this embedding technique to analyse the asymptotic behaviour of our model.  A key difference between our asymptotic analysis and previous CLTs for multitype epidemics is that under the scaling we consider, in the event of a major outbreak, the asymptotic number of communities that experience a major outbreak is \emph{random}, whereas in the previous CLTs it is non-random (in almost all cases \emph{every} type experiences a major outbreak, though \cite{Scal86} considers situations, such as reducible mixing between types, in which a \emph{fixed} number of types do not experience a major outbreak).

An assumption of our model is that every person in a community can infect any other person in that community, and indeed any other person in the population.  Although such an assumption does not seem realistic from the point of view of applications, it is made in the vast majority of epidemic models including most of those used to inform public health policy.
One class of models in which that assumption is not made consists of epidemics on random networks, in which individuals are represented by nodes in a random graph and a person can infect only their neighbours in the graph \cite[e.g.][]{Newman02}.  Rigorous proof of the asymptotic behaviour of epidemics on random graphs as the population size tends to infinity is more difficult than for standard epidemic models, though CLTs have been developed for the final outcome of SIR epidemics on a few random graph models, see e.g.~\cite{Neal06} and \cite{Ball21} for epidemics on multitype Bernoulli random graphs and configuration model random graphs, respectively.  These results are qualitatively similar to corresponding results for standard SIR epidemics and
%SIR epidemics on random graphs display qualitatively similar asymptotic behaviour to standard SIR epidemics under usual `equal-strength' scaling regimes.
we conjecture that similar results to those proved in this paper will hold for epidemics on suitably-defined multi-community random graphs.

The paper is organised as follows. In Section~\ref{model} we specify the underlying model, outline our method of analysis and state our main results with some heuristic arguments supporting their conclusions. In Section~\ref{numerics} we present some numerical examples demonstrating the applicability of our large-population limit theorems to approximating properties of epidemics in finite populations. In Section~\ref{rigorous} we present proofs of our results, and in Section~\ref{conclusions} we make some concluding comments and discuss directions for future work. Appendices~\ref{appA} and~\ref{appB} contain two proofs that we defer from Section~\ref{rigorous} on account of their length.

\subsection{Notation}
\label{notation}

 For a non-negative random variable $X$, we write $Y\sim\Po(X)$ when $\P(Y=k) = \E_X[X^k \e^{-X} / k!]$ for $k=0,1,2,\cdots$; i.e.\ when $Y$ has a Mixed-Poisson distribution with parameter distributed as $X$. For a scalar $\lambda\geq0$ we write $\Po(\lambda)$ for the standard Poisson distribution and for $n \in \mathbb{N}$ and $p \in [0,1]$ we write $\Bin(n,p)$ for the usual binomial distribution. Furthermore, ${\rm N}(\mu,\sigma^2)$ denotes a normal distribution with mean $\mu$ and variance $\sigma^2$, ${\rm N}(\bzero,\Sigma)$ denotes
a zero-mean multivariate normal distribution with variance-covariance matrix $\Sigma$, whose dimension is obvious from the context, and ${\rm Exp}(\theta)$ denotes an exponential distribution with rate $\theta>0$ (and hence mean $\theta^{-1}$).

Throughout the paper we use the standard notation $\stlt$ for the usual stochastic ordering, $\convp$ for convergence in probability, $\convD$ for convergence in distribution and $\eqdist$ for equality in distribution. We also let $\BB(X)$ denote a Galton-Watson branching process with one ancestor (initial individual) and offspring distribution that is $\Po(X)$; for $\lambda\geq0$, $\BB(\lambda)$ is such a process with a fixed rather than random mean for the Poisson offspring distribution.

\section{Model, main results and heuristics}
\label{model}

In this section we define the model that we study and present our main results with heuristic supporting arguments. In Section~\ref{sec:modelAndAsymp} we specify our model for the spread of an SIR epidemic in a population of weakly-connected communities and detail the two asymptotic regimes under which we shall study it (roughly speaking, when the size of each community diverges and the number of communities is either fixed or also diverges). Then in Section~\ref{sec:singleComm} we present results on single-community (or local) outbreaks and in Section~\ref{sec:multiComm} describe how we can view the multi-community epidemic as a sequence of single-community epidemics. In Section~\ref{sec:fixedm} we justify heuristically and present our main result (Theorem~\ref{fixedmasymmain}) in the regime where the number of communities is fixed; and in Section~\ref{sec:divergingm} we similarly motivate and present our main results (Theorems~\ref{manymearlymain}, ~\ref{multifinalmain} and~\ref{multifinalmain1}) in the case of a diverging number of communities.

\subsection{Model and asymptotic analysis regimes}
\label{sec:modelAndAsymp}
Consider a population partitioned into $m+1$ communities, labelled $0,1,\cdots,m$, each having size $n$.  The epidemic is initiated at time $t=0$ by one of the individuals in community $0$ becoming infected, with all other individuals in the population assumed to be susceptible.  Infected individuals have independent infectious periods, each distributed according to a non-negative random variable $I$ having Laplace transform
\begin{equation*}
\phi_I(\theta)=\E\left[\e^{-\theta I}\right] \in (0,\infty] \qquad (\theta \in \mathbb{R}).
\end{equation*}
We assume throughout the manuscript that $E[I^{2+\alpha}]<\infty$ for some $\alpha>0$ and denote the (necessarily finite) mean and variance of the distribution by $\mu_I$ and $\sigma^2_I$ respectively.  Throughout its infectious period, a given infective makes infectious contacts with any given susceptible in its own community at the points of a homogeneous Poisson process having rate $\bwn$ and, additionally, with any given susceptible in the population at the points of a homogeneous Poisson process having rate $\bgn$.  The former are called \emph{local} or \emph{within-community} contacts and the latter are called \emph{global} contacts.  All the Poisson processes describing infectious contacts, whether or not either or both of the individuals involved are the same, as well as the random variables describing infectious periods, are assumed to be mutually independent.  A susceptible individual becomes infective as soon as they are contacted by an infective and recovers at the end of their infectious period.  Recovered individuals are permanently immune to further infection, so they play no further role in the epidemic.  The epidemic ceases as soon as there is no infective present in the population.
We denote this epidemic model with $m+1$ communities, each of size $n$,  by $\mathcal{E}^{(n,m)}$.
%\annote{}{Changed with slight variation on FB's suggested wording.  FGB Further change, and some restructuring and slight modification below as equation (2.1) came too early. }

We note here that the model above does not include a latent period, however since the results in this paper relate to the final outcome of an SIR epidemic they are insensitive to quite general assumptions concerning a latent period \cite[see e.g.][]{Ball86}. Moreover, the results of the present paper (suitably modified) carry over to a model in which very general two-type point processes, representing the times they make global and local contacts, are assigned to infectious individuals; so long as each of the contacts is made with an individual chosen uniformly from the population or community. In particular, all results in this paper apply without change to corresponding SEIR epidemics.  Adjusting our results to allow for
%initial conditions different from those described above (multiple
a fixed number of initial infectives spread in some specified way through the communities is also straightforward, but it is rather lengthy to describe. %\annote{}{Parenthetical comment as FB suggests.} For the sake of brevity we do not give any details.

We are interested in properties of the final size (the total number of individuals infected during the epidemic process) in the communities and in the total population. We obtain asymptotic results for the model where the community size $n$ tends to infinity and the number of communities $m+1$ is either fixed or also tends to infinity.  More formally, we consider sequences of epidemics $\Enm$, indexed by $n$, in which either (i) $m_n=m$ for all $n$, or (ii) $m_n \to \infty$ as $n \to \infty$.  We usually use the notation $m$ or $m_n$ for the number of communities to indicate which of these cases is being considered.

Throughout we assume that as $n \to \infty$,
\begin{equation}
\label{infectionrates}
\bwn n \to \lw \in (0,\infty) \qquad \mbox{ and }  \qquad \bgn n^2 m_n \to \lg \in (0,\infty).
\end{equation}
This implies that as $n \to \infty$, the total rate of within-community contacts of a given individual converges to $\lambda_W$ and the total rate of global contacts made by a given community, if every individual in that community is infected, converges to $\lambda_G$.
A key consequence of this scaling is that a `small' group of infected individuals within a community will with high probability (i.e.\ with probability that tends to 1 as $n \to \infty$) not make any global contacts during their infectious period. Asymptotically, only large groups of infected individuals within a community (i.e.\ groups with final size proportional to the size of the community) have a strictly positive probability of infecting other communities.

We prove some laws of large numbers and central limit theorems for the total number of infected individuals (the \emph{final size}), the number of communities that experience a large outbreak and the sum of all infectious periods of individuals infected during the epidemic (the \emph{severity}).  Under asymptotic regime (i), there is randomness in the law of large numbers limit and the limiting random variable in the central limit theorem has a mixture distribution.
We prove two central limit theorems under asymptotic regime (ii): Theorem~\ref{multifinalmain}(b), in which the mean vector both depends on $n$ and fails to have a readily available useful expression, and Theorem~\ref{multifinalmain1}, in which the mean vector is independent of $n$ and admits a simple form. The latter requires further assumptions, see (C1)--(C5) near the end of Section~\ref{sec:divergingm}; in particular that $m_n$ tends to infinity slower than $n^2$.

\subsection{Single-community epidemics}
\label{sec:singleComm}
The basis for our analysis is the spread of an SIR epidemic in a large homogeneously mixing population, without community structure.
So suppose that $m=0$ and thus that the epidemic only involves community $0$.
The model then reduces to a standard single-population SIR epidemic, which we denote by $\En(\bwn, I)$.  Let $Z_n$ be the total number of individuals infected by the epidemic, including the initial infective, and let $A_n$ denote the severity of the epidemic, i.e.~the sum of the infectious periods of all infectives in the epidemic.  Let $\bar{Z}_n=n^{-1}Z_n$ and $\bar{A}_n=n^{-1}A_n$. The quantity $A_n$ is of interest in its own right, but for our purposes it is an essential ingredient in the \citet{Sell83} like construction which is a key tool in the proofs of all the main results in the paper.
To state the results needed later in the paper we first need to introduce some more terminology and notation.

Let
\begin{equation}
\label{R0def}
R_0=\mu_I\lw
\end{equation}
be the asymptotic expected number of within-community contacts an infected individual makes during their infectious period. So $R_0$ is the limiting basic reproduction number \cite[p.~4]{Diek13}  for the single-community epidemic.
We say that a within-community epidemic is \emph{large}, or that a \emph{major} within-community epidemic occurs, if at least $\log n$ of the individuals in the community are ultimately recovered (i.e.\ are infected at some point during the epidemic).
Now define
\begin{equation}
\pi_W =  \inf\{s >0 : s= \phi_I(\lw(1-s))\}
\label{piw}
\end{equation}
and
\begin{equation}
\zinf =  \sup\{z\geq 0 : z=1-\e^{-R_0 z}\},
\label{zinf}
\end{equation}
noting that $\zinf$ is a function of $(\lw, \mu_I)$ as $R_0=\mu_I \lw$.
We also have $\zinf = 1 + R_0^{-1} W_0(-R_0 {\e}^{-R_0})$, where $W_0(\cdot)$ is the principal branch of the Lambert W function \cite[see e.g.][p.~337]{CorlessEtal1996}.  Note that $\zinf>0$ if and only if $R_0>1$.

The following theorem tells us that, as $n\to\infty$, if $R_0 \leq 1$ then an epidemic stays small with probability $\pi_W=1$. If $R_0>1$, then with probability $1-\pi_W>0$ a major outbreak occurs and the epidemic infects a positive fraction $\zinf$ of the community, while with probability $\pi_W$ the epidemic stays small. Furthermore, a central limit theorem is given for the number of individuals infected during a major outbreak.%; one for the severity of a major outbreak is also available~\citep{BallClancy93} but is not required for this paper.
\begin{theorem}
\label{singleSIR}
\begin{enumerate}
\item Suppose that $R_0 \le 1$.  Then
\begin{equation}
\label{singleminor}
\lim_{n \to \infty} \P(Z_n \ge \log n)=\lim_{n \to \infty} \P(A_n \ge \mu_I\log n)=0.
\end{equation}
\item Suppose that $R_0 > 1$.  Then
\begin{enumerate}
\item with $\pi_W$ as in \eqref{piw},
\begin{equation}
\label{singlemajor}
\lim_{n \to \infty} \P(Z_n \ge \log n)=\lim_{n \to \infty} \P(A_n \ge \mu_I\log n)=1-\pi_W;
\end{equation}

\item furthermore, as $n \to \infty$,
\begin{equation}
\label{singleWLLN}
\bar{Z}_n \mid Z_n \ge \log n \convp z_\infty \qquad \bar{A}_n \mid Z_n \ge \log n \convp \zinf \mu_I;
\end{equation}

\item if we also have $\displaystyle \lim_{n \to \infty}\sqrt{n}(n\bwn-\lw)=0$
then
\begin{equation}
\label{singleCLT}
\sqrt{n}
\left(\begin{array}{c}
\bar{Z}_n-\zinf\\
\bar{A}_n-\zinf \mu_I\\
\end{array}
\right) \mid Z_n \ge \log n \convD \bY \quad \mbox{as }n \to \infty,
\end{equation}
where $\bY \sim {\rm N}(\bzero,\Sigma_{\bY})$ with
\begin{equation}
\label{singleCLTvar}
\Sigma_{\bY}=\frac{\zinf}{\left[1-R_0(1-\zinf)\right]^2}\left\{
(1-\zinf)\left[\begin{array}{cc}
1&\mu_I\\
\mu_I & \mu_I^2
\end{array}
\right]
+
\sigma^2_I \left[\begin{array}{cc}
\lw^2(1-\zinf)^2&\lw(1-\zinf)\\
\lw(1-\zinf)&1
\end{array}
\right]
\right\}.
\end{equation}
\end{enumerate}
\end{enumerate}
\end{theorem}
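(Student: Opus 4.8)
The four assertions are all read off the Sellke construction of the single-community epidemic $\En(\bwn,I)$. Give each of the $n-1$ initially susceptible individuals an i.i.d.\ ${\rm Exp}(1)$ resistance $Q_i$; give the initial infective the infectious period $I_0$ and the susceptibles, when infected, the i.i.d.\ copies $I_1,I_2,\dots$ of $I$, re-indexed by exchangeability so that the $j$-th susceptible to be infected has period $I_j$. Then, if exactly $k+1$ individuals are ever infected, they are the initial one together with the $k$ susceptibles of smallest resistance, $Z_n=k+1$, $A_n=I_0+I_1+\dots+I_k$, and $k$ is the smallest fixed point of the nondecreasing map $j\mapsto\#\{i:Q_i\le\bwn(I_0+I_1+\dots+I_j)\}$.

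For parts (a) and (b)(i) the plan is to couple the epidemic, while fewer than $\lceil\log n\rceil$ individuals have ever been infected, with Galton--Watson processes: during this phase an infective with period $I$ makes a $\Po(\bwn S I)$ number of contacts, where the current number of susceptibles satisfies $n-\lceil\log n\rceil\le S\le n$, so the number ever infected is sandwiched between the total progenies of two Galton--Watson processes whose offspring laws both converge to $\Po(\lw I)$, and hence whose extinction probabilities both converge to $\pi_W$ (by \eqref{piw}). If $R_0\le1$ the dominating total progeny exceeds $\log n$ with probability $\to0$ (the case $R_0=1$ requiring only standard near-critical total-progeny tail estimates), giving the first display; if $R_0>1$ the event $\{Z_n\ge\log n\}$ lies between the events that the lower and upper coupled processes attain total progeny $\lceil\log n\rceil$, each of probability $\to1-\pi_W$. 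The severity statements follow: on $\{Z_n<\epsilon\log n\}$ we have $A_n\le I_0+I_1+\dots+I_{\lceil\epsilon\log n\rceil}$, of mean $\sim\epsilon\mu_I\log n$, so choosing $\epsilon\in(0,1)$ and using $\E[I^{2+\alpha}]<\infty$, a concentration bound gives $\P(A_n\ge\mu_I\log n)\to0$ in case (a), and in case (b)(i) that $\{A_n\ge\mu_I\log n\}$ and $\{Z_n\ge\log n\}$ coincide up to an event of vanishing probability.

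For part (b)(ii), put $x=j/n$; the functional strong law for the partial sums gives $n^{-1}(I_0+I_1+\dots+I_{\lfloor nx\rfloor})\to\mu_I x$ uniformly on $[0,1]$, and the Glivenko--Cantelli theorem for the resistances (whose relevant argument stays bounded since $\bwn n\to\lw$) gives $n^{-1}\#\{i:Q_i\le\bwn(I_0+\dots+I_{\lfloor nx\rfloor})\}\to1-\e^{-R_0x}$ uniformly, so the normalised fixed-point map converges uniformly to $x\mapsto1-\e^{-R_0x}$. The latter has $0$ as its smallest nonnegative fixed point and, exactly when $R_0>1$, a second fixed point $\zinf$. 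A minor outbreak has $Z_n=O_{\rm p}(1)$ (its coupled process, conditioned to die out, is subcritical), so on $\{Z_n\ge\log n\}$ the epidemic is major, the attained fixed point is the one near $\zinf$, and $\bar{Z}_n\convp\zinf$; then $\bar{A}_n=n^{-1}(I_0+\dots+I_{Z_n-1})\convp\zinf\mu_I$ by the same strong law at the random index $Z_n\sim n\zinf$.

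For part (b)(iii) I would make this quantitative, in the spirit of the classical final-size central limit theorems (von Bahr--Martin-L\"of; Scalia-Tomba; Martin-L\"of; Ball). Conditionally on $A_n$ the number of susceptibles that escape is $\Bin(n-1,\e^{-\bwn A_n})$; writing $D_n=Z_n-1-(n-1)\zinf$ and linearising $a\mapsto1-\e^{-\bwn a}$ about $a=n\mu_I\zinf$ (using $\e^{-R_0\zinf}=1-\zinf$) leads to the approximate identity
\[
\bigl(1-\lw\mu_I(1-\zinf)\bigr)\,D_n\;\approx\;B_n+\lw(1-\zinf)\,\Sigma_n+n\,O(n\bwn-\lw),
\]
where $\Sigma_n=\sum_{j=1}^{\lfloor n\zinf\rfloor}(I_j-\mu_I)$ is the centred severity fluctuation, asymptotically ${\rm N}(0,n\sigma^2_I\zinf)$ and measurable with respect to the infectious periods, while $B_n$, the binomial fluctuation about its conditional mean, is --- conditionally on the infectious periods --- asymptotically ${\rm N}(0,n\zinf(1-\zinf))$ with nonrandom limiting variance, hence asymptotically independent of $\Sigma_n$. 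The coefficient $1-\lw\mu_I(1-\zinf)=1-R_0\e^{-R_0\zinf}$ is the slope of $x-1+\e^{-R_0x}$ at its larger root $\zinf$, which is positive, and the deterministic remainder is $o(\sqrt n)$ precisely by the hypothesis $\sqrt n(n\bwn-\lw)\to0$; dividing through, using $\sqrt n(\bar{Z}_n-\zinf)\approx D_n/\sqrt n$, and adding the two variances gives ${\rm N}(0,\sigma^2_W)$ with $\sigma^2_W$ as in \eqref{singleCLTvar}. The main obstacle is upgrading the ``$\approx$'' to a proof: this needs a joint functional central limit theorem for the partial sums of $\{I_j\}$ and for the resistance empirical process composed with the random severity, uniform control of the linearisation remainder over a $\sqrt n$-window around $\zinf$, and --- via parts (b)(i)--(ii) --- the guarantee that, conditionally on $\{Z_n\ge\log n\}$, the root being tracked is the one near $\zinf$ rather than the spurious one near $0$. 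Everything else is Donsker-type convergence and Slutsky's lemma.
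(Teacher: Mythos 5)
The paper does not prove Theorem~\ref{singleSIR}: immediately after the statement the authors remark that the results concerning $Z_n$ and $\bar Z_n$ follow from Theorem~2 of von Bahr and Martin-L\"of (1980) under a stronger moment hypothesis, and under the present hypothesis $\E[I^{2+\alpha}]<\infty$ from a slight generalisation of Scalia-Tomba (1990). So the ``paper's proof'' is a citation, not an argument, and you have set out to reconstruct the argument that the cited literature supplies.

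Your reconstruction is essentially the standard one underlying those references, and the main lines are sound. Parts (a) and (b)(i) via two-sided Galton--Watson coupling while fewer than $\lceil\log n\rceil$ individuals have been infected is exactly the Whittle/Ball-style threshold argument; the case $R_0=1$ does need the near-critical total-progeny tail estimate you flag, and your reduction of the severity assertions to a weak law for $I_0+\dots+I_{\lceil\epsilon\log n\rceil}$ with $\epsilon\in(0,1)$ is the right move (though note the simpler observation that on $\{Z_n<\log n\}$ one actually has $Z_n=O_{\rm p}(1)$, which already makes $A_n=O_{\rm p}(1)$). Part (b)(ii) via the Sellke fixed-point map, functional strong law for the partial sums of the $I_j$, and Glivenko--Cantelli for the exponential resistances is the argument of Scalia-Tomba and of Ball--Clancy; one small wording quibble is that the final size arises as the first $j$ at which the nondecreasing crossing count drops to $j$ rather than as the ``smallest fixed point'' of the map, though since the map is nondecreasing and integer-valued the two characterisations coincide. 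Part (b)(iii) you correctly reduce to a joint CLT for the infectious-period partial sums and the (conditionally binomial) escape count, linearised about $\zinf$; the slope $1-\lw\mu_I(1-\zinf)$ and the independent variance contributions $n\zinf(1-\zinf)$ and $\lw^2(1-\zinf)^2\cdot n\sigma_I^2\zinf$ reproduce \eqref{singleCLTvar}, and you are right that the hypothesis $\sqrt n(n\bwn-\lw)\to0$ is exactly what kills the deterministic bias term. You honestly label this a sketch: upgrading it to a proof requires the uniform control over a $\sqrt n$-window and the ruling out of the spurious root near $0$ that you list, which is precisely the content of the Scalia-Tomba embedding argument (and is what the paper's own Lemma~\ref{pn_znhat_convlemma} re-develops in a sharper form for its own purposes in Appendix~\ref{appB}). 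In summary: the approach differs from the paper only in that you spell out an argument where the paper cites one, and the argument you spell out is the expected one; no gap in the overall plan, but (b)(iii) as written is a programme rather than a proof, which you yourself acknowledge.
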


We do not provide a proof for Theorem~\ref{singleSIR} as the results are already known.
The results concerning $Z_n$ and $\bar{Z}_n$ can be derived from \citet[Theorem 2]{Bahr80} under the stronger assumption that $\phi_I(\theta)$ is finite in an open interval containing the origin, which we also assume for our central limit results in Theorem \ref{multifinalmain1}, but \emph{not} Theorems \ref{fixedmasymmain}--\ref{multifinalmain}, below.
Under the present condition that $I$ has a finite moment of order $2+\alpha$, the results of Theorem \ref{singleSIR} can be obtained using a slight generalisation of \cite{Scal90}.
We also note that in equations~\eqref{singleminor} and~\eqref{singlemajor} we could replace $\mu_I$ with any positive constant. We use the particular constant $\mu_I$ as an aide-memoire that the quantity relates to the severity of (or infectious pressure generated by) the within-community epidemic.

We also note, for future reference, that if $R_0>1$ then $(1-\zinf)R_0<1$. This is intuitively plausible since $(1-\zinf)R_0$ is the effective reproduction number for the within-community epidemic following a large outbreak; if that reproduction number were larger than one then the large outbreak would not have terminated.  A formal proof may be obtained by noting from~\eqref{zinf} that $1-z>\e^{-R_0 z}$ for $z \in (0, \zinf)$ and $1-z<\e^{-R_0 z}$ for $z \in (\zinf,1]$.  Taking $z=1-\frac{1}{R_0}$ in both $1-z$ and $\e^{-R_0 z}$ gives $\zinf > 1-\frac{1}{R_0}$, since $\frac{1}{R_0}>\e^{-(R_0-1)}$.

\subsection{Multi-community epidemics}
\label{sec:multiComm}
%\annote{}{Reformulated this paragraph as suggested. Please check wording/flow.}
We now turn our attention to a population with multiple communities (both for constant $m_n$ and for $m_n \to \infty$ as $n \to \infty$).
A key observation that pervades our whole analysis is that in order to analyse properties of the final outcome of an epidemic (e.g.\ the final size and the severity) we do not have to keep track of the exact times when infections take place or who infects whom.
It is sufficient to keep track of how much infection each individual is exposed to, in the sense of `exposure to infection' of~\cite{Sell83}. (See also e.g.~\cite{Ludwig1975} and~\cite{PelFerFra2008}.)
The final outcome of our model can therefore be analysed by considering, in turn, (i) local epidemics (or outbreaks), which are epidemics restricted to a community, ignoring global contacts and (ii) the global contacts made by those infected in the local epidemics.  The latter may trigger further local epidemics, so (i) and (ii) are iterated until no further local epidemics occur. %\annote{}{FGB has added extra sentence at end.}

Individuals infected in a local epidemic are those individuals infected through a chain of local contacts, starting at an initial infective, which in community 0 is the initial infected individual and in other infected communities is an individual infected through a global contact. From \eqref{infectionrates} and Theorem \ref{singleSIR}(b)(ii), we obtain that if a major local outbreak occurs in one of the communities, the total number of global contacts the infected individuals in that community make to other individuals in the population as a whole converges in distribution to a $\Po(\lambda_G z_{\infty} \mu_I)$ random variable. This gives rise to non-trivial behaviour of the epidemic in the population with multiple communities since $\lambda_G z_{\infty} \mu_I$ is bounded away from 0 and $\infty$. Intuitively we proceed by only considering major local outbreaks in the communities, ignoring the minor local outbreaks and the possible global contacts made by individuals infected in those minor outbreaks. We justify this approach in Lemma~\ref{initialsev} by showing that the contribution of minor outbreaks and any subsequent global infections to the asymptotic fraction infected is negligible. Furthermore, it follows (from the last paragraph of Section~\ref{sec:singleComm}) that, asymptotically, introduction of infection to a community which has already experienced a major outbreak cannot lead to a further major outbreak. Therefore the number of communities which experience major outbreaks can be analysed by assuming that local outbreaks occur instantaneously. The spread of major outbreaks amongst communities can then be thought of as a Reed-Frost epidemic, where individuals correspond to communities and infectious contacts correspond to between-community contacts which seed a major outbreak.

A \emph{Reed-Frost epidemic} \cite[p.~48]{Diek13} is a discrete time epidemic model in which every infected individual at time $t$ ($t \in \mathbb{N}$) is recovered at time $t+1$. Furthermore, given $I(t)$, a susceptible at time $t$ is infected at time $t+1$ with probability $1-(1-p)^{I(t)}$, independently of the states of other individuals, otherwise the susceptible remains so at time $t+1$. Here $I(t)$ is the number of infected individuals at time $t$ and $p$ is the pairwise infectious contact probability per time unit.
In Theorem~\ref{fixedmasymmain} below, we consider the final size $Z^{(m)}_{\rm RF}= Z^{(m)}_{\rm RF}(p_{\rm RF})$ (i.e.\ the number ultimately recovered individuals, including the initially infective one) of a Reed-Frost epidemic in a population with one initial infected individual, $m$ initial susceptible individuals and pairwise infection probability
\begin{equation}
\label{RFprob}
p_{\rm RF}=1-\exp\left(-\lg(1-\pi_W)\zinf \mu_I/m\right).
\end{equation}
Here $\lg\zinf \mu_I$ is the mean number of between-community contacts emanating from a community that experiences a large outbreak. Each such contact is equally likely to be with each of the $m$ other communities and, so long as it has not previously experienced a large outbreak, the infected community will experience a large outbreak with probability $1-\pi_W$. The final size of this Reed-Frost epidemic thus approximates the final number of within-community major outbreaks.
Note that $Z^{(m)}_{\rm RF}$ is distributed as the size of the cluster of a uniformly chosen vertex in $G(m+1,p_{\rm RF})$, the Erd\H{o}s-R{\'e}nyi graph with $m+1$ vertices and edge probability $p_{\rm RF}$ \citep{BarMol1990}.

For $n=1,2,\cdots,$ let $Z^{(n)}_T$ be the total number of individuals, including the initial infective, infected in $\Enm$, $A^{(n)}_T$ be the sum of the infectious periods of those $Z^{(n)}_T$ infectives (i.e.~the severity of $\Enm$) and $Z^{(n)}_C$ be the number of communities experiencing epidemics of size at least $\log n$ (i.e.\ the number of communities experiencing large epidemics). Each of $Z^{(n)}_T$, $A^{(n)}_T$ and $Z^{(n)}_C$ is counted over all $m+1$ communities. These three quantities (or appropriate functions of them) are ultimately the main objects of study in the paper. The methods and results necessarily take rather different forms depending on whether the number of communities $m_n$ is fixed or diverges as $n\to\infty$.

\subsection{Constant number of communities}
\label{sec:fixedm}
If the number of communities is fixed then the Reed-Frost epidemic of major within-community outbreaks described above can be applied directly to analyse the multi-community epidemic. If the initially infectious individual does not initiate a large outbreak within community 0 (which under our assumption of a single initial infective occurs with probability $\pi_W$), then the probability that there is at least one global infectious contact made by the individuals infected during that minor outbreak tends to 0 as $n \to \infty$. If, on the other hand, the local epidemic in community 0 is large, then in the large population limit $Z^{(m)}_{\rm RF}$ describes the number of communities that experience a major outbreak; and the size of each such outbreak can be approximated using Theorem~\ref{singleSIR}. The next theorem formally states the convergence in distribution of $Z^{(n)}_C$ to the final size of a Reed-Frost epidemic and presents an associated conditional law of large numbers and central limit theorem for $(Z^{(n)}_T, A^{(n)}_T)$, the overall final size and severity of the epidemic. Let $\barzn_T=n^{-1}Z^{(n)}_T$ and $\baran_T=n^{-1}A^{(n)}_T$, so $(m+1)^{-1}\barzn$ is the proportion of the population that is infected in $\Enm$.

\begin{theorem}
\label{fixedmasymmain}
Assume that $m$ is fixed and that $R_0>1$.
Suppose that the epidemic in community $0$ infects at least $\log n$ individuals. Then, as $n \to \infty$,
\begin{enumerate}
\item
\begin{equation}
\label{RFlimitmain}
Z^{(n)}_C \convD Z^{(m)}_{\rm RF};
\end{equation}
\item
\begin{equation}
\label{fixedmLLNmain}
\left(\begin{array}{c}
\barzn_T\\
\baran_T\\
\end{array}
\right)
\convD Z^{(m)}_{\rm RF} \left(\begin{array}{c}
\zinf\\
\zinf\mu_I\\
\end{array}
\right);
\end{equation}
\item
\begin{equation}
\label{fixedmCLTmain}
\sqrt{n}\left(\begin{array}{c}
\barzn_T -Z^{(n)}_C\zinf\\
\baran_T -Z^{(n)}_C\zinf \mu_I\\
\end{array}
\right)
\convD \sum_{i=1}^{Z^{(m)}_{\rm RF}} \bY_i,
\end{equation}
where
$\zinf$ is defined in \eqref{zinf},
$\bY_1,\bY_2,\cdots,\bY_{m+1}$ are independent and identically distributed (i.i.d.) ${\rm N}(\bzero,\Sigma_{\bY})$ random variables that are independent of $Z^{(m)}_{\rm RF}$ and $\Sigma_{\bY}$ is defined in~\eqref{singleCLTvar}.
\end{enumerate}

\end{theorem}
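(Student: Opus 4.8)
The plan is to set up a coupling between the multi-community epidemic $\Enm$ and an idealized process in which (a) only major within-community outbreaks are tracked, and (b) each such outbreak behaves exactly like an independent copy of the single-community epidemic of Theorem~\ref{singleSIR}. Concretely, I would use the Sellke-type construction: assign to each individual an independent ${\rm Exp}(1)$ resistance variable and to each infective an independent infectious period, and realize the epidemic by revealing local epidemics community-by-community together with the global contacts they generate. The key structural fact, justified via \eqref{infectionrates} and Theorem~\ref{singleSIR}, is that a minor local outbreak (fewer than $\log n$ infections) generates a global contact with probability tending to $0$, while a major local outbreak generates a number of global contacts that converges in distribution to $\Po(\lg \zinf \mu_I)$, each landing in a uniformly chosen community. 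On the event that community~$0$ has a major outbreak, this reveals the epidemic among communities as (asymptotically) a Reed-Frost epidemic with infection probability $p_{\rm RF}$ from \eqref{RFprob}: a community that has not yet had a major outbreak receives one with probability $1-\pi_W$ per "seeding" contact, and re-introduction into an already-infected community cannot trigger a further major outbreak (by the last paragraph of Section~\ref{sec:singleComm}). Part~(a) then follows because, with $m$ fixed, the number of communities is bounded and one can make all these finitely many approximations simultaneously; the contribution of minor outbreaks is controlled by Lemma~\ref{initialsev}.

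For part~(b), condition on $\znmc = k$: on this event exactly $k$ communities have major outbreaks, each infecting a fraction that is $\zinf + o_{\rm P}(1)$ by Theorem~\ref{singleSIR}(b)(ii), while the remaining $m+1-k$ communities have $o_{\rm P}(n)$ infected individuals each (again Lemma~\ref{initialsev}). Summing, $\barzn_T = n^{-1}\sum (\text{community final sizes}) = k\zinf + o_{\rm P}(1)$, and since $\znmc \convD Z^{(m)}_{\rm RF}$ from part~(a), the mixing of these conditional limits over $k$ gives $\barzn_T \convD \zinf Z^{(m)}_{\rm RF}$. Care is needed to argue the joint convergence of $(\znmc, \barzn_T)$ rather than just the marginals, but since $\barzn_T$ is determined up to $o_{\rm P}(1)$ by $\znmc$ via the map $k \mapsto k\zinf$, continuous-mapping-type arguments suffice.

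Part~(c) is the central limit refinement and the main technical heart. Again condition on $\znmc = k$ and write
\begin{equation*}
\sqrt{n}\left(\barzn_T - \znmc \zinf\right) = \sum_{i=1}^{k} \sqrt{n}\left(\bar{Z}_{n,i} - \zinf\right) + \sqrt{n}\,R_n,
\end{equation*}
where $\bar{Z}_{n,i}$ is the final fraction infected in the $i$-th community experiencing a major outbreak and $R_n$ collects the (rescaled) contributions of the $m+1-k$ communities with only minor outbreaks plus any initial infectives seeded into already-infected communities. Theorem~\ref{singleSIR}(b)(iii) gives $\sqrt{n}(\bar{Z}_{n,i}-\zinf) \convD {\rm N}(0,\sigma^2_W)$ for each $i$, and one needs (i) asymptotic independence of these $k$ summands and of the index set of infected communities, so that the sum converges to a sum of $k$ i.i.d.\ ${\rm N}(0,\sigma^2_W)$'s, and (ii) $\sqrt{n}\,R_n \convp 0$. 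Claim~(ii) is where the $2+\alpha$ moment assumption on $I$ and the condition $\sqrt{n}(n\bwn - \lw) \to 0$ feed in, and it requires showing that the total number infected in minor outbreaks is $o_{\rm P}(\sqrt{n})$ — this is essentially the content of Lemma~\ref{initialsev} sharpened to the $\sqrt{n}$ scale, and I expect it to be the hardest step. For claim~(i), the coupling construction makes the within-community epidemics conditionally independent given which communities are infected and given the (negligibly many) seeding global contacts; one must verify that the weak dependence introduced by the shared global-contact structure vanishes as $n\to\infty$. Finally, mixing over $k$ with weights $\P(Z^{(m)}_{\rm RF} = k)$ yields the stated random-sum limit $\sum_{i=1}^{Z^{(m)}_{\rm RF}} Y_i$ with the $Y_i$ i.i.d.\ ${\rm N}(0,\sigma^2_W)$ and independent of $Z^{(m)}_{\rm RF}$, using part~(a) together with the fact that the Gaussian fluctuations are (asymptotically) independent of the realized value of $\znmc$.
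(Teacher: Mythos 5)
Your overall scaffolding (Sellke/exposure-to-infection construction, major-vs.-minor dichotomy, Reed--Frost limit for the number of major outbreaks, Gaussian fluctuation per major outbreak) matches the paper's conceptual picture. The genuine gap is in the "condition on $Z^{(n)}_C=k$, sum, then mix over $k$" strategy for parts~(b) and especially~(c). Conditioning on $\{Z^{(n)}_C=k\}$ does \emph{not} leave the within-community final sizes with the unconditional distribution of Theorem~\ref{singleSIR}(b)(iii): whether community $j$ seeds community $j'$ depends on the severity $A^{(n)}_j$, hence on the final size in community $j$, so the event $\{Z^{(n)}_C=k\}$ is correlated with the very fluctuations you then want to treat as unconditional ${\rm N}(0,\sigma^2_W)$. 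You flag this as "claim~(i)" but give no mechanism to establish it, and it is precisely the crux.

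The paper's proof avoids the problem by never conditioning on $Z^{(n)}_C$. It works with the embedded processes $\left(Z^{(n)}_i(\cdot),A^{(n)}_i(\cdot)\right)$, $i=1,\dots,m$, which are i.i.d.\ \emph{as processes in the exposure-time variable $t$}, and represents the final exposure level as the fixed point $\tildetinfn=\min\{t:t=\tildetn_0+\tildeabn(t)\}$ from~\eqref{Tinfinity}. It then augments Lemma~\ref{ZAnweak} to a joint weak convergence
\[
\left(\bar{Z}^{(n)}_i(\cdot),\bar{A}^{(n)}_i(\cdot),X^{(n)}_i(\cdot),Y^{(n)}_i(\cdot)\right)\Rightarrow \Psi_i(\cdot)\left(\zinf,\mu_I\zinf,1,Y_i\right),
\]
in which the jump time $L_i$ of $\Psi_i$ and the fluctuation $Y_i$ are independent \emph{in the limit}. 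Plugging the fixed point into this via the continuous mapping theorem yields $Z^{(n)}_C$, $\barzn_T$ and the centred sum \emph{simultaneously}, with the independence of $(L_i)$ and $(Y_i)$ delivering for free exactly the asymptotic independence you would otherwise have to prove by hand. The identification of the Reed--Frost limit is then made concrete by the order-statistics representation~\eqref{RFsize}, which your proposal also leaves implicit. So: right intuition, but the conditional-independence step is an unclosed gap, and the fixed-point-plus-weak-convergence machinery is not an optional refinement of your plan -- it is what makes the asymptotic independence provable at all.
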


\subsection{Diverging number of communities}
\label{sec:divergingm}
We now consider the case when the number of communities $m_n+1 \to \infty$ as $n \to \infty$.
Before we formulate our results regarding this asymptotic regime, we need some further notation.

For the epidemic $\Enm$, let $\hatznc$ denote the total number of communities, including community $0$, that are infected during its course, i.e.\ $\hatznc \in \{1,2,\cdots, m_n +1\}$ is the number of communities that contain at least one non-susceptible individual at the end of the epidemic.
As above, $\Zn_C$ denotes the number of communities that experience an epidemic having size at least $\log n$, $\Zn_T$ denotes the total number of individuals infected, $\An_T$ denotes the severity of the epidemic, $\barzn_T=n^{-1}\Zn_T$ and $\baran_T=n^{-1}\An_T$.  (Note that $\Zn_T$ and $\An_T$ are counted over all $m_n+1$ communities.)
We say that a global epidemic occurs if at least $\log m_n$ communities are affected by the outbreak and let $\Gn = \{\hatznc \ge \log m_n\}$ denote this event.

During the early stages of the  epidemic $\Enm$, the process of infected communities can be approximated by a branching process which assumes that all global contacts are with individuals in previously uninfected communities. We show that for large $n$ this branching process is close to the  branching process $\BBcheck \sim \BB(\lg \bar{A})$ (recall the notation defined in Section~\ref{notation}), where the random variable $\bar{A}$ satisfies
$$\P(\bar{A}=0)=\pi_W=1-\P(\bar{A}=\zinf\mu_I).$$
(The offspring distribution in the initial generation is the same as in subsequent generations since we assume a single initial infective for the epidemic.)
The offspring mean of this branching process is given by
\begin{equation}
\label{equ:Rstar}
R_*=\lg (1-\pi_W) \zinf\mu_I,
\end{equation}
which is the expected number of large within-community outbreaks initiated by a typical large within-community outbreak.
Let $\check{Z}_C$ and
$\check{\pi}_G$ denote respectively the total size (including the initial individual) and extinction probability of $\BBcheck$, so
\begin{equation*}
\check{\pi}_G = \inf\{s\geq 0 : \pi_W+(1-\pi_W)\e^{-\mu_I\lg\zinf(1-s)}=s\}.
\end{equation*}
Hence $\check{\pi}_G = \min(1,\pi_W
-(1-\pi_W) W_0(-R_*e^{-R_*})/R_*)$, where $W_0(\cdot)$ is the principal branch of the Lambert W function.
Standard branching process theory implies that $\check{\pi}_G <1$ if and only if $R_*>1$.
We write $G$ for the event that $\BBcheck$ does not go extinct.

We note from the definition of $R_*$ in~\eqref{equ:Rstar} that $R_0>1$ is a necessary but not sufficient condition for $R_*>1$ (since $R_0 \leq1$ implies $\pi_W=1$).  This is because  in the limit as $n \to \infty$ under the asymptotic regime \eqref{infectionrates}, an epidemic in one community must infect a strictly positive fraction of that community in order to have non-zero probability of spreading to other communities.

Next, to approximate the process of communities that experience large outbreaks,  let $\check{Z}_{C*}$ be the total number of individuals (including the ancestor) in $\BBcheck$ whose value of $\bar{A}$ is $\zinf \mu_I$.  Then $\P\left(\check{Z}_{C*}=0\right)=\pi_W$ and $\check{Z}_{C*} \mid  \check{Z}_{C*}>0$ is distributed as the total size (including the ancestor) of the branching process $\BB_* = \BB(\lg(1-\pi_W)\zinf\mu_I)$, i.e.\ of a Galton-Watson process having offspring distribution that is Poisson with mean $\lg(1-\pi_W)\zinf\mu_I$.

Now we are ready to formulate the first main result regarding $\Enm$ for $m_n \to \infty$. Theorem \ref{manymearlymain} describes the epidemic if only few communities are infected during the epidemic (i.e.\ if a global epidemic does not occur).
\begin{theorem}
\label{manymearlymain}
Suppose that $R_0>1$. Then, as $n \to \infty$,
\begin{description}
\item(a)\ \
\begin{equation}
\label{pglobalmain}
\P(\Gn) \to 1-\check{\pi}_G;
\end{equation}
\item(b)\ \
\begin{equation}
\label{BTlimitmain}
\hatznc \mid \left(\Gn\right)^C \convD \check{Z}_{C} \mid G^C \quad\mbox{and}\quad
\Zn_C \mid \left(\Gn\right)^C \convD \check{Z}_{C*} \mid G^C;
\end{equation}
\item(c)\ \
\begin{equation}
\label{BTLLNmain}
\left(\begin{array}{c}
\barzn_T\\
\baran_T\\
\end{array}
\right) \mid \left(\Gn\right)^C \convD
\left(\begin{array}{c}
\zinf\\
\zinf \mu_I\\
\end{array}
\right) \check{Z}_{C*} \mid G^C
\end{equation}
and
\begin{equation}
\label{BTCLTmain}
\sqrt{n}\left(\begin{array}{c}
\barzn_T -Z^{(n)}_C\zinf\\
\baran_T -Z^{(n)}_C\zinf \mu_I\\
\end{array}
\right) \mid \left(\Gn\right)^C \convD \left(\sum_{i=1}^{\check{Z}_{C*}} \bY_i\right) \mid G^C,
\end{equation}
where the sum is zero if vacuous, $\bY_1,\bY_2,\cdots$ are i.i.d. ${\rm N}(\bzero,\Sigma_{\bY})$ random variables that are independent of $\check{Z}_{C*}$ and $\Sigma_{\bY}$ is given by~\eqref{singleCLTvar}.

\end{description}

\end{theorem}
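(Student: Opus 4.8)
\emph{Setup and the comparison branching process.} The plan is to run $\Enm$ community by community and to compare the resulting process of infected communities with the ``collision--free'' branching process $\bar{\BB}^{(n)}$ obtained by pretending that every global contact from an infected community falls in a community not previously infected. Using the Sellke--type construction referred to in Section~\ref{sec:multiComm}, if a community is seeded by a single global contact then, conditionally on the severity $A_n$ of the ensuing within--community epidemic (distributed as in $\En(\bwn,I)$), the number of its members that receive a global contact from it is $\Bin\!\bigl(m_n n,\,1-\e^{-\bgn A_n}\bigr)$ (up to an $O(1)$ correction in the number of trials), the trials being conditionally independent because in the Sellke construction an individual is globally infected iff its ${\rm Exp}(1)$ threshold lies below the accumulated global pressure $\bgn A_n$. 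Hence $\bar{\BB}^{(n)}$ is a genuine Galton--Watson process with offspring law $p^{(n)}$ equal to the law of this $\Bin$, the same in the initial as in subsequent generations. Since $\bgn A_n=O_p(\bgn n)=o_p(1)$ by \eqref{infectionrates}, Le Cam's inequality gives $d_{\rm TV}\!\bigl(p^{(n)},\Po(\bgn n^2 m_n\bar A_n)\bigr)\to0$; and conditioning on whether the within--community epidemic is minor or major (Theorem~\ref{singleSIR}(a) and~(b)(ii)) together with $\bgn n^2m_n\to\lg$ shows that $p^{(n)}\convD\Po(\lg\bar A)$, jointly with the ``mark'' recording whether the epidemic had size $\ge\log n$, the limiting joint law of $(\text{mark},\text{offspring})$ being precisely that of $\BBcheck$: $(\text{minor},0)$ with probability $\pi_W$ and $(\text{major},\Po(\lg\zinf\mu_I))$ with probability $1-\pi_W$. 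The bound $\bar A_n\le n^{-1}\sum_{i=1}^n I_i$ supplies uniform integrability, whence the offspring mean of $\bar{\BB}^{(n)}$ converges to $R_*$. Finally, each global contact is (up to $O(1/m_n)$) uniform over the other $m_n$ communities, so while fewer than $\log m_n$ communities are infected a given global contact lands in an already--infected one with probability $O(\log m_n/m_n)$; as the total number of global contacts made up to that point is $O_p(\log m_n)$, a union bound shows that with probability $1-O((\log m_n)^2/m_n)\to1$ the infected--community process of $\Enm$ and $\bar{\BB}^{(n)}$ agree up to the instant $\log m_n$ communities have been infected. (Communities seeded by two or more near--simultaneous global contacts, which occur with probability $O(1/m_n)$, are likewise absorbed into this error.)

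\emph{Part (a).} By the last point, $\P(\Gn)=\P\!\bigl(\bar{\BB}^{(n)}\text{ ever reaches size}\ge\log m_n\bigr)+o(1)$. Let $\check{\pi}_G^{(n)}$ be the extinction probability of $\bar{\BB}^{(n)}$; uniform convergence of probability generating functions (from $p^{(n)}\convD\Po(\lg\bar A)$ together with the convergence of means to $R_*$) gives $\check{\pi}_G^{(n)}\to\check{\pi}_G$ when $R_*\neq1$. For $R_*>1$, conditionally on $\bar{\BB}^{(n)}$ reaching size $s$ each of those $s$ individuals independently founds a line extinct with probability $\check{\pi}_G^{(n)}$, so $\P\bigl(\bar{\BB}^{(n)}\text{ reaches }s\text{ and then dies out}\bigr)\le(\check{\pi}_G^{(n)})^{s}$; taking $s=\log m_n\to\infty$ and using $\check{\pi}_G^{(n)}\le\check{\pi}_G+o(1)<1$, this tends to $0$, so $\P(\bar{\BB}^{(n)}\text{ reaches}\ge\log m_n)\to1-\check{\pi}_G$. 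If $R_*<1$ (in particular if $R_0\le1$, for then $\pi_W=1$ and $R_*=0$) then $\check{\pi}_G^{(n)}\to1$ and $\P(\Gn)\to0=1-\check{\pi}_G$; the boundary case $R_*=1$ requires the extra remark that depletion of uninfected communities renders the process effectively subcritical by the time $\Theta(m_n)$ are infected, again forcing $\P(\Gn)\to0$. Since $\P(G)=1-\check{\pi}_G$ by standard branching--process theory, \eqref{pglobalmain} follows.

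\emph{Parts (b) and (c).} On $(\Gn)^C$ intersected with the $o(1)$--complement coupling event, $\bar{\BB}^{(n)}$ never reaches size $\log m_n$, hence dies out, $\hatznc$ equals its total progeny and $\Zn_C$ equals its number of ``major'' individuals. The total progeny of a Galton--Watson process, and its number of marked individuals, are on extinction determined by finitely many offspring/mark probabilities, so $p^{(n)}\convD\Po(\lg\bar A)$ (jointly with the mark) and $\check{\pi}_G^{(n)}\to\check{\pi}_G$ transfer to these functionals, giving $\hatznc 1_{\{(\Gn)^C\}}\convD\check{Z}_C 1_{\{G^C\}}$ and $\Zn_C 1_{\{(\Gn)^C\}}\convD\check{Z}_{C*}1_{\{G^C\}}$; the tightness needed to pass from finite--dimensional to full convergence comes from dominating $\bar{\BB}^{(n)}$ above by a fixed, slightly supercritical Galton--Watson process. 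For the final--size statements, condition on the (asymptotically $\BBcheck$--distributed) marked genealogy: the within--community final sizes $\{Z_n^{(v)}\}$ over infected communities $v$ are then conditionally independent, with $Z_n^{(v)}<\log n$ when $v$ is minor and $Z_n^{(v)}\mid Z_n^{(v)}\ge\log n$ distributed as in $\En(\bwn,I)$ when $v$ is major. Writing $\barzn_T=n^{-1}\sum_v Z_n^{(v)}$, the $O_p(1)$ minor communities contribute $O_p(\log n/n)$, while each of the $\Zn_C$ major ones contributes $\bar Z_n^{(v)}\convp\zinf$ by Theorem~\ref{singleSIR}(b)(ii); since $\Zn_C\convD\check{Z}_{C*}$, a random--sum argument yields \eqref{BTLLNmain}. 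For the central limit theorem, $\sqrt n(\barzn_T-\Zn_C\zinf)=\sum_{v\ \mathrm{minor}}\sqrt n\,\bar Z_n^{(v)}+\sum_{v\ \mathrm{major}}\sqrt n(\bar Z_n^{(v)}-\zinf)$; the first sum is $O_p(\log n/\sqrt n)=o_p(1)$, and the second is a sum over the $\Zn_C\to\check{Z}_{C*}$ major communities of conditionally independent terms each converging to ${\rm N}(0,\sigma^2_W)$ by Theorem~\ref{singleSIR}(c), so conditioning on the value of $\check{Z}_{C*}$ and assembling gives \eqref{BTCLTmain}.

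\emph{Main obstacle.} The crux is the first step: showing $p^{(n)}\convD\Po(\lg\bar A)$ jointly with the ``major'' mark and with convergence of the offspring mean. This needs a severity--level strengthening of Theorem~\ref{singleSIR} --- controlling $A_n$ (not merely $Z_n$), ruling out the vanishingly unlikely ``intermediate'' outbreaks of size between $\log n$ and $\epsilon n$, and the uniform integrability of $\bar A_n$ --- combined with careful Le Cam/Poisson--coupling bookkeeping. A secondary technical nuisance is the boundary case $R_*=1$ in part~(a), where survival of $\bar{\BB}^{(n)}$ must be reconciled with the depletion--driven subcriticality that eventually takes over.
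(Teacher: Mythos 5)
Your overall strategy — compare the infected-community process with a collision-free branching process whose offspring law concentrates on $\Po(\lg\bar A)$, control the coupling error via a birthday-problem bound of order $(\log m_n)^2/m_n$, and then transfer total-progeny functionals and within-community CLTs across the coupling — is essentially the route taken in the paper. The paper defines $\BBncheck=\BB(m_nn\bgn A_n)$ directly with mixed-Poisson offspring and couples community choices via i.i.d.\ uniforms, whereas you pass through a $\Bin(m_nn,1-\e^{-\bgn A_n})$ offspring law and invoke Le Cam; this is slightly more roundabout but lands in the same place.

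There is, however, a genuine gap in your argument for part (a). You bound $\P(\bar{\BB}^{(n)}\text{ reaches size }\ge\log m_n\text{ and then dies out})$ by $(\check\pi_G^{(n)})^{\log m_n}$ on the grounds that ``each of those $s$ individuals independently founds a line extinct with probability $\check\pi_G^{(n)}$''. That bound is valid if ``reaches size $s$'' means the number of individuals \emph{alive simultaneously} reaches $s$; but the event $\Gn$ is $\hatznc\ge\log m_n$, i.e.\ the \emph{total cumulative progeny} reaches $\log m_n$, which does not force the population ever to be large at a single instant (a near-linear chain has total progeny $\log m_n$ with at most $O(1)$ alive at any time). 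So the step $\P(\text{total progeny}\ge\log m_n,\text{ extinct})\le(\check\pi_G^{(n)})^{\log m_n}$ is unjustified. The paper sidesteps this entirely with a sandwich: for fixed $k$,
$\P(\checkzn_C\le k)\le\P(\checkzn_C<\log m_n)\le\P(\checkzn_C<\infty)$ once $\log m_n>k$, and letting $n\to\infty$ then $k\to\infty$ gives $\P(\checkzn_C<\log m_n)\to\check\pi_G$ without ever needing a quantitative decay of the ``large but finite'' tail. You should replace your step with this (or with the observation that $\lim_k\P(k<\check Z_C<\infty)=0$); your remark about the boundary case $R_*=1$ then also becomes unnecessary, since the sandwich works for all $R_*$. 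The rest — parts (b) and (c), the uniform integrability via $\bar A_n\le n^{-1}\sum I_i$, and the conditional-independence decomposition into minor/major communities — matches the paper and is sound.
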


The probability mass function of $\check{Z}_{C*}$, the total size of $\BB_*$, is easily obtained using Theorem 2.11.2 of \citet{Jage75} and has a Borel distribution with parameter $R_*$.  Specifically,
\begin{equation*}
\P\left(\check{Z}_{C*}=k\right)=
\begin{cases}
   \pi_W&\quad \text{if }  k=0,\\
   (1-\pi_W)\frac{\left(kR_*\right)^{k-1}\e^{-kR_*}}{k!}& \quad \text{if } k=1,2,\cdots .
  \end{cases}
\end{equation*}
Theorem~\ref{manymearlymain} implies that, for large $n$,  if $R_* \le 1$ then $\check{Z}_{C}$ approximates the total number of infected communities $\hatznc$ in $\Enm$, and the total number of individuals infected is approximately distributed as a random sum of $\check{Z}_{C*}$ independent normal random variables, each having mean $n\zinf$ and variance $n\sigma^2_W$, where
\[
\sigma^2_W=\sigma^2_W(\lw,\mu_I,\sigma^2_I)=\frac{\zinf(1-\zinf)+\lw^2\sigma^2_I (1-\zinf)^2 \zinf}{(1-\lw\mu_I (1-\zinf))^2}.
\]
If $R_*>1$ and a global epidemic does not occur then these approximations hold with the distributions of $\check{Z}_{C}$ and $\check{Z}_{C*} \mid \check{Z}_{C*}>0$ being conditioned on extinction of $\BBcheck$ and $\BB_*$, respectively.

In order to describe the size of an epidemic in which many communities are infected during the course of the epidemic we need some further notation.
Let $\barznmc =m_n^{-1}\znmc$, $\tznmt=(nm_n)^{-1}\znmt$ and $\tanm=(nm_n)^{-1}A^{(n)}_T$. Thus $m_n(m_n+1)^{-1} \barznmc$ is the fraction of communities that experience large outbreaks (i.e.\ with size at least $\log n$), $m_n(m_n+1)^{-1} \tznmt=(n(m_n+1))^{-1}\znmt$ is the fraction of individuals in the population that are infected, while $m_n(m_n+1)^{-1} \tanm$ is the average severity per individual (infected or not) in the population.  (We scale by $m_n$, rather than perhaps the more natural $m_{n}+1$, as it fits better with our proofs, which involve analysing a slightly modified epidemic model defined on the $m_n$ communities not containing the initial infective.)

Define the functions $x(\cdot)$, $z(\cdot)$ and $a(\cdot)$ on $[0, \infty)$ by
\begin{equation}
\label{xzadefs}
x(t)=1-\e^{-\lg(1-\pi_W)t}, \quad
z(t)=\zinf x(t) \quad
\mbox{and}
\quad
a(t)=\mu_I \zinf x(t).
\end{equation}
Furthermore, define
\begin{equation}
\label{taudef}
\tau = \sup\{t \geq 0 : t=a(t)\}.
\end{equation}
Thus $\tau=\mu_I\zinf\left(1+W_0(-R_*e^{-R_*})/R_*\right)$,
where $W_0(\cdot)$ is the principal branch of the Lambert W function.  Note that $\tau>0$ if and only if $R_*>1$.

The following epidemic model is used repeatedly in our analysis.
\begin{definition}
\label{defEpsnt}
For $t \ge 0$, let the single-population epidemic $\En(t)$ be defined similarly to $\En(\bwn, I)$ in Section~\ref{sec:singleComm}, except initially there are $S_n \sim \Bin(n, \pi_n(t))$ susceptibles, where $\pi_n(t)=\e^{-\bgn n m_n t}$, and $n-S_n$ infectives.
\end{definition}
Note that in Definition~\ref{defEpsnt}, $t$ is not real time but relates to cumulative time units of external infectious pressure (see Section~\ref{embedding}).  Let $\Zn(t)$ and $\An(t)$ be, respectively, the size and severity of $\En(t)$.  (In Section~\ref{embedding}, an epidemic process $\left\{\left(Z^{(n)}(t),A^{(n)}(t)\right):t \ge 0\right\}$ is defined, with slight abuse of notation, which for each $t \ge 0$ has the same marginal joint distribution of $(\Zn(t), \An(t))$ as the size and severity of $\En(t)$.)
For $t \ge 0$, let
\begin{equation}
\label{xzandefs}
\xn(t) = \P(\Zn(t) \ge \log n), \quad
\zn(t) = n^{-1} \E[\Zn(t)], \quad
\an(t) = n^{-1} \E[\An(t)]
\end{equation}
and
\begin{equation*}
\taun = \inf \{ t>0 : t=\an(t) \},
\end{equation*}
with $\taun=0$ if the set $\{t>0:t=\an(t)\}$ is empty (cf.~equation~\eqref{taudef}).  It also holds that the mean size and severity satisfy $\an(t) = \mu_I \zn(t)$ (see the discussion just after\ \eqref{zn1}; cf.\ the final two equations in~\eqref{xzadefs}). The functions $\xn(\cdot), \zn(\cdot)$ and $\an(\cdot)$ do not have useful explicit forms but, under suitable conditions, $\lim_{n \to \infty}\xn(\cdot)=x(\cdot)$ , $\lim_{n \to \infty}\zn(\cdot)=z(\cdot)$,
$\lim_{n \to \infty}\an(\cdot)=a(\cdot)$ and $\lim_{n \to \infty}\taun = \tau$
(see Lemma~\ref{GC}(a) and Lemma~\ref{tauCgce}).

Finally, let  $\bN$ be a three-dimensional zero-mean normal random vector with variance-covariance matrix given by
\begin{equation}
\label{finalSigmaZ}
\Sigma_{\bN}
%=\frac{x(\tau)(1-x(\tau))}{\left[1-\zinf \mu_I \lg (1-\pi_W)(1-x(\tau))\right]^2}
%\, \bb \bb^\top
=\frac{x(\tau)(1-x(\tau))}{\left[1-R_*(1-x(\tau))\right]^2}
\begin{bmatrix}
1 & \zinf & mu_I \zinf \\
\zinf & \zinf^2 & \mu_I \zinf^2 \\
\mu_I \zinf & \mu_U \zinf^2 & \mu_I^2 \zinf^2
\end{bmatrix}
=\frac{x(\tau)(1-x(\tau))}{\left[1-R_*(1-x(\tau))\right]^2}
\, \bb \bb^\top,
\end{equation}
where
\begin{equation}
\label{bbDef}
\bb=(1,\zinf,\mu_I \zinf)^\top.
\end{equation}

It is easily checked that $\tau=0$ if $R_* \leq 1$, while $\tau \in (0,\infty)$ if $R_*>1$.
In Theorem~\ref{multifinalmain} below we see that $\tau$ is the asymptotic severity per individual in the case when many communities are infected during the epidemic. This theorem provides a law of large numbers and a central limit theorem for $\barznmc $, $\tznmt$ and $\tanm$, conditioned on $\Gn$, the event that at least $\log m_n$ communities are infected during the course of an epidemic.

\begin{theorem}
\label{multifinalmain}
Suppose that $R_* > 1$.
\begin{description}
\item(a)\ \ As $n \to \infty$,
\begin{equation*}
\label{finalWLeq1main}
\barznmc \mid \Gn \convp x(\tau),\quad \tznmt \mid \Gn \convp z(\tau) \quad \mbox{and } \quad \tanm \mid \Gn
\convp \tau.
\end{equation*}
\item(b)\ \ As $n \to \infty$,
\begin{equation*}
\sqrt{m_n}
\left(\begin{array}{c}
\barznmc -\xn(\taun)\\
\tznmt -\zn(\taun)\\
\tanm -\an(\taun)\\
\end{array}
\right) \mid \Gn
\convD \bN.
\end{equation*}
\end{description}
\end{theorem}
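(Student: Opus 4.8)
The plan is to run a Sellke-type construction at the level of communities. Since community~$0$ adds only $O(1)$ to the unscaled counts, we may work with the $m_n$ communities not containing the initial infective, community~$0$ entering merely as an external source of global contacts (the ``slightly modified'' model discussed in Section~\ref{sec:divergingm}). By the discussion in Section~\ref{sec:multiComm} the final outcome is obtained by iterating local outbreaks and the global contacts they generate, and by Lemma~\ref{initialsev} the minor outbreaks, together with any global contacts emanating from them, may be discarded at no asymptotic cost. The driving quantity is the scaled total severity $\tanm=(nm_n)^{-1}A^{(n)}_T$: when the scaled severity accumulated so far equals $t$, the global infectious pressure on any given not-yet-affected individual is $\bgn n m_n t$, so such a community behaves like the single-population epidemic $\En(t)$ with $\Bin(n,\pi_n(t))$ initial susceptibles. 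Assigning to each community~$i$ an independent copy of the randomness needed to run $\En(\cdot)$, together with an exponential threshold governing the order in which communities are activated, gives a construction monotone in the ambient severity and in which $\tanm$ is, up to an error that is $o_p(1)$ (and which must be controlled at the $o_p(m_n^{-1/2})$ level for part~(b)), a fixed point of the random map $t\mapsto m_n^{-1}\sum_{i=1}^{m_n}\Xi^{(n)}_i(t)$, where $\Xi^{(n)}_i(t)$ is the scaled severity (distributed as $n^{-1}\An(t)$) contributed by community~$i$; for fixed $t$ these are i.i.d.\ across~$i$, up to asymptotically negligible dependence, with mean $\an(t)$.

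For part~(a) I would sandwich the true epidemic between a lower- and an upper-bounding modification (the two-sided constructions underlying the quantities $\tildetinfnL$ and $\tildetinfnU$): the upper one permits re-seeding of communities that have already experienced an outbreak, which decouples the contributions; the lower one truncates and is built up by bootstrapping the severity level. Both satisfy the fixed-point relation with vanishing error, so by the law of large numbers for the i.i.d.\ contributions together with the convergence $\an\to a$ uniformly on compacts (Lemma~\ref{GC}(a)) and $\taun\to\tau$ (Lemma~\ref{tauCgce}), their scaled severities converge to a root of $t=a(t)$; for $R_*>1$ the only roots are $0$ and $\tau$. The early phase is governed by Theorem~\ref{manymearlymain}: on $(\Gn)^C$ the between-community process stays bounded, while on $\Gn$ it reaches at least $\log m_n$ communities, and a further comparison with the supercritical branching process $\BBcheck$ shows that, conditionally on $\Gn$, a positive fraction of the $m_n$ communities become major with high probability, which through the fixed-point relation pins $\tanm$ to the larger root $\tau$. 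Given $\tanm\mid\Gn\convp\tau$, the conditional law of large numbers for the community contributions together with Theorem~\ref{singleSIR}(b) yields $\barznmc\mid\Gn\convp x(\tau)$ and $\tznmt\mid\Gn\convp z(\tau)$, while $\tanm\mid\Gn\convp\tau=a(\tau)$ by consistency. (Alternatively, once part~(b) is established, (a) follows from it and $\xn(\taun)\to x(\tau)$, $\zn(\taun)\to z(\tau)$, $\an(\taun)\to\tau$.)

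For part~(b) I would linearise the fixed-point equation at its finite-$n$ root $\taun$, working throughout conditionally on $\Gn$. The key reduction is that, to leading order, the fluctuations of all three quantities are affine images of a single centred empirical sum $S_n=m_n^{-1/2}\sum_{i=1}^{m_n}\bigl(1\{\text{community }i\text{ major}\}-\xn(\taun)\bigr)$, the fluctuation of the number of major communities when the ambient scaled severity is held at $\taun$: a community that experiences a major outbreak contributes, at scale $n$, one to the community count, $\zinf+O(n^{-1/2})$ to the infected proportion and $\mu_I\zinf+O(n^{-1/2})$ to the severity, and the $O(n^{-1/2})$ within-community central-limit fluctuations of Theorem~\ref{singleSIR}(c) wash out after the $m_n^{-1/2}$ normalisation since they enter only at order $(nm_n)^{-1/2}\to0$. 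Differentiability of $\an$, with $(\an)'(\taun)\to a'(\tau)=\zinf\mu_I\lg(1-\pi_W)(1-x(\tau))<1$, turns the fixed-point self-consistency into the amplification factor $(1-a'(\tau))^{-1}$; propagating $S_n$ through the linear relations gives
\begin{equation*}
\sqrt{m_n}\left(\begin{array}{c}\barznmc-\xn(\taun)\\ \tznmt-\zn(\taun)\\ \tanm-\an(\taun)\end{array}\right)\;=\;\frac{1}{1-a'(\tau)}\,\bb\,S_n+o_p(1),
\end{equation*}
where $\bb=(1,\zinf,\mu_I\zinf)^\top$ and $S_n\convD{\rm N}(0,x(\tau)(1-x(\tau)))$ by the Lindeberg central limit theorem (the moment hypothesis $\E[I^{2+\alpha}]<\infty$ supplying the uniform integrability needed to pass to this limiting variance). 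Since $1-a'(\tau)$ equals the bracket in~\eqref{finalSigmaZ}, the right-hand side converges in distribution to a zero-mean normal vector with variance-covariance matrix $\Sigma_{\bN}$, i.e.\ to $\bN$; the Cram\'er--Wold device makes the (rank-one) joint convergence formal.

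I expect two points to carry the real weight. The first is stitching the early stochastic phase --- where Theorem~\ref{manymearlymain} and a branching-process comparison apply but only $o_p(m_n)$ communities are affected --- to the fixed-point/law-of-large-numbers phase: one must show that conditioning on $\Gn$ drives the process into the basin of the larger root and that the randomness accumulated during the early phase is $o_p(\sqrt{m_n})$ after normalisation, so that it does not perturb the limit. The second, and the reason the centring in~(b) is by $\xn(\taun),\zn(\taun),\an(\taun)$ rather than by their limits $x(\tau),z(\tau),\tau$, is that the error in approximating the size and severity of a single large within-community outbreak --- equivalently in replacing $\xn,\zn,\an,\taun$ by $x,z,a,\tau$ --- need not be $o(m_n^{-1/2})$ when $m_n$ grows fast relative to~$n$; the rate-of-convergence estimate for $\zn(\cdot)\to z(\cdot)$ (the auxiliary SIR result highlighted in the abstract) must be carried through the argument, and it only suffices to pass to the $n$-free centring under the additional hypotheses (C1)--(C5), i.e.\ in Theorem~\ref{multifinalmain1}. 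Bounding the coupling errors between the true epidemic and its sandwiching modifications at the $o_p(m_n^{-1/2})$ level is the routine-but-lengthy part that I would expect to be deferred to an appendix.
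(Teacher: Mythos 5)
Your proposal follows essentially the same route as the paper: a two‑phase argument in which the early stage (up to $\log m_n$ infected communities) is controlled by the branching‑process comparisons of Theorem~\ref{manymearlymain}, after which a Sellke‑type embedding on the remaining $m_n'$ communities is sandwiched between lower and upper modifications ($\tildetinfnL$, $\tildetinfnU$) and the fixed‑point relation $t=\tildetn_0+\tildeabn(t)$ is linearised at $\taun$, with the within‑community CLT noise washing out and the amplification factor $(1-a'(\tau))^{-1}$ producing the rank‑one covariance~\eqref{finalSigmaZ}. The one place where the paper is more explicit than your sketch is the justification of evaluating the centred empirical process at the random solution time: rather than a pointwise CLT for $S_n$ at $\taun$ plus a modulus‑of‑continuity argument, the paper proves a full functional CLT (Theorem~\ref{weakconvr}, with tightness via bracketing in Appendix~\ref{appA}) and then combines it with $\tildetinfnep\convp\tau$, Slutsky and the continuous mapping theorem before applying the mean value theorem; this is precisely the ``routine‑but‑lengthy'' appendix material you anticipated, and your identification of the two genuinely delicate points (the stitching of phases, and the reason the centring in~(b) must be $n$‑dependent while the $n$‑free centring is deferred to Theorem~\ref{multifinalmain1} under (C1)--(C5)) is accurate.
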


The central limit theorem in Theorem~\ref{multifinalmain}(b) is not immediately applicable since, as noted above, useful expressions for the mean vector
$(\xn(\taun), \zn(\taun), \an(\taun))^\top$ are unavailable.  We need to impose further conditions to obtain a central limit theorem in which
the mean vector is replaced by its limit $(x(\tau), z(\tau), a(\tau))^\top$.

Let $f(s)=\phi_I(\lw(1-s))$ $(s \ge 0)$.  Then $f$ is the offspring PGF (probability-generating function) for the Galton-Watson process $\BB=\BB(\lambda_W I)$ that approximates the initial phase of
the single-population epidemic $\En(\bwn, I)$, defined in Section~\ref{sec:singleComm}, and $\pi_W$ is the extinction probability of $\BB$. Note from~\eqref{equ:Rstar} that $R_*>1$ only if $\pi_W<1$, so a necessary condition for $R_*>1$ is that $\BB$ is supercritical.  Let $\hat{f}(s)=
\pi_W^{-1}f(s\pi_W)$ $(s \ge 0)$ be the offspring PGF of the subcritical Galton-Watson process which describes $\BB$ conditioned on extinction (see e.g.~\cite{Daly79}) and define
\begin{equation*}
s_0=\max\{s>1:h=s\hat{f}(h)\mbox{ for some } h \in (1,\infty)\}.
\end{equation*}
Such an $s_0$ exists as $\hat{f}$ is convex and $\hat{f}'(1)<1$; it is the value of $s$ such that the line $g(h)=s^{-1}h$ is tangential to $\hat{f}(h)$.

For our next result we need to impose the following further conditions:\label{conditionsC}
\begin{enumerate}
\item[(C1)] there exists $\delta \in (0,2)$ such that $\lim_{n \to \infty}m_n n^{\delta-2}=0$,
\item[(C2)] $\lim_{n \to \infty} \sqrt{m_n} (n^2 m_n \bgn-\lg)=0$,
\item[(C3)] $\lim_{n \to \infty} \sqrt{m_n}(n\bwn-\lw) = 0$,
\item[(C4)] there exists $\theta<0$ such that $\phi_I(\theta)<\infty$ and
\item[(C5)] there exists $\theta' \in (0,2\log s_0)$ such that $\lim_{n \to \infty}m_n n^{-\theta'}=0$.
\end{enumerate}

\begin{theorem}
\label{multifinalmain1}
Suppose that $R_* > 1$ and conditions (C1)--(C5) are satisfied.  Then
\begin{equation*}
\sqrt{m_n}
\left(\begin{array}{c}
\barznmc -x(\tau)\\
\tznmt -z(\tau)\\
\tanm -a(\tau)\\
\end{array}
\right)\left|\right. \Gn
\convD \bN \qquad \mbox{as } n \to \infty.
\end{equation*}
\end{theorem}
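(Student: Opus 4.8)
The plan is to derive Theorem~\ref{multifinalmain1} from Theorem~\ref{multifinalmain}(b) by showing that, under conditions (C1)--(C5), one may replace the $n$-dependent centering vector $(\xn(\taun),\zn(\taun),\an(\taun))^\top$ by its limit $(x(\tau),z(\tau),a(\tau))^\top$ without affecting the limiting distribution. By Slutsky's theorem it suffices to prove that
\begin{equation*}
\sqrt{m_n}\left(\xn(\taun)-x(\tau)\right)\to 0,\quad
\sqrt{m_n}\left(\zn(\taun)-z(\tau)\right)\to 0,\quad
\sqrt{m_n}\left(\an(\taun)-a(\tau)\right)\to 0
\end{equation*}
as $n\to\infty$. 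Since $\an(t)=\mu_I\zn(t)$ and $a(t)=\mu_I z(t)$ and $z(t)=\zinf x(t)$, the third statement follows from the second and the second follows from the first together with a matching rate-of-convergence statement for $\sqrt{m_n}(\zn(t)-\zinf\xn(t))$ at $t=\taun$; so the crux is a quantitative version of $\xn(\taun)\to x(\tau)$ and $\zn(\taun)\to \zinf\xn(\taun)$, sharp enough to beat the $\sqrt{m_n}$ factor.

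First I would establish the rate of convergence of $\taun$ to $\tau$. Recall $\taun$ is the first positive fixed point of $\an(\cdot)$ and $\tau$ that of $a(\cdot)$, and that $\tau\in(0,\infty)$ since $R_*>1$. Because $a'(\tau)<1$ (the fixed point is stable, equivalently $\zinf\mu_I\lg(1-\pi_W)(1-x(\tau))<1$, which is exactly the quantity appearing in the denominator of $\Sigma_{\bN}$ in~\eqref{finalSigmaZ}, and it is positive there), the implicit-function/contraction argument gives $|\taun-\tau|=O(\sup_{t}|\an(t)-a(t)|)$ on a neighbourhood of $\tau$, uniformly. So everything reduces to bounding $\sup_t|\an(t)-a(t)|$, or more precisely $\sup_t|\xn(t)-x(t)|$ and $\sup_t|\zn(t)-\zinf\xn(t)|$, and then using Lipschitz continuity of $x(\cdot)$ and $\zn(\cdot)$ near $\tau$ to transfer the bound at $\taun$ to a bound at $\tau$.

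The main work, and the main obstacle, is thus the rate of convergence of the expected fraction infected in the near-critical-seeding single-population epidemic: for $\En(t)$ with $S_n\sim\Bin(n,\pi_n(t))$ initial susceptibles and $\pi_n(t)=\e^{-\bgn n m_n t}\to\e^{-\lg t}$, one must show $\sqrt{m_n}(\zn(t)-z(t))\to 0$ and $\sqrt{m_n}(\xn(t)-x(t))\to 0$, uniformly for $t$ near $\tau$. This is precisely the ``new result concerning the rate of convergence of the expected fraction infected in a standard SIR epidemic'' advertised in the abstract, and it is where (C1)--(C5) enter: (C3) controls the discrepancy $n\bwn-\lw$ in the within-community rate, (C2) the discrepancy $n^2 m_n\bgn-\lg$ feeding into $\pi_n(t)$, (C4) (finiteness of $\phi_I$ for some negative argument) gives exponential tails needed for a sharp comparison of the epidemic with its approximating branching process $\BB=\BB(\lambda_W I)$, (C5) (with $s_0$ defined via the tangency condition on $\hat f$) quantifies how fast the probability of a ``medium'' outbreak---one larger than $O(1)$ but smaller than $\epsilon n$---decays, using large-deviation bounds for the subcritical conditioned process, and (C1) ($m_n n^{\delta-2}\to0$) controls the remaining polynomial error terms coming from the diffusion/fluctuation corrections to the deterministic limit, i.e.\ the gap between $\E[\bar Z_n]$ and its $n\to\infty$ value is $O(n^{-1})$ while we only need it to be $o(m_n^{-1/2})$. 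I expect this step to require a careful Sellke-construction coupling: writing $\Zn(t)$ via exposure-to-infection variables, comparing to a branching process on a $[0,\epsilon n]\cup[\epsilon n, n]$ dichotomy, estimating the ``minor outbreak'' contribution to the mean (negligible at rate governed by (C5)), and estimating the ``major outbreak'' contribution via a CLT-type expansion whose bias term is $O(n^{-1})$ under the moment assumption $\E[I^{2+\alpha}]<\infty$ and the finite-exponential-moment condition (C4); this is presumably carried out in one of Appendices~\ref{appA}--\ref{appB}. Once $\sqrt{m_n}(\xn(\taun)-x(\tau))\to0$ etc.\ are in hand, the conclusion is immediate from Theorem~\ref{multifinalmain}(b) and Slutsky.
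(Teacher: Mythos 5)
Your proposal is correct and follows essentially the same route as the paper: Slutsky plus Theorem~\ref{multifinalmain}(b) reduce the claim to the rate conditions~\eqref{mnxnznanconv}, and the real work is the rate-of-convergence lemmas for $\xn$ and $\zn$ (the paper's Lemmas~\ref{pn_znhat_convlemma} and~\ref{znconvlem}), with the contraction argument from $a'(\tau)<1$ giving the needed rate on $\taun-\tau$. Two small points worth knowing: the paper controls $\zn$ by decomposing it as $(1-\png)\hatwng+\png\hatzng$ with a \emph{different} (larger, $n^\gamma$) threshold than the $\log n$ threshold used in $\xn$, so (C5) is needed only for $\xn$, not for $\zn$; and the comparison is not literally a Sellke construction but a direct coupling of $\En(t)$ to a branching process $\BBn(t)$ together with susceptibility sets for the conditional-mean part.
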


Note from Theorem~\ref{multifinalmain}(b) and Slutsky's theorem that a necessary and sufficient condition for Theorem~\ref{multifinalmain1} to hold is that
\begin{equation}
\label{mnxnznanconv}
 \sqrt{m_n}(\xn(\taun)-x(\tau)) \to 0\mbox{ and }
 \sqrt{m_n}(\zn(\taun)-z(\tau)) \to 0 \mbox{ as } n \to \infty.
\end{equation}
Conditions (C1)--(C4) are sufficient for the second convergence in~\eqref{mnxnznanconv} to hold, and hence for $(\tznmt, \tanm) \mid \Gn$ to obey the corresponding central limit theorem.  Condition (C1) implies condition (C5) if $\log s_0>1$.  Note that if the infectious period distribution $I$ is held fixed, $s_0$ decreases with $\lambda_W$, so the closer the within-community epidemic $\En(\bwn, I)$ is to criticality the more stringent is condition (C5).  Theorem~\ref{multifinalmain1} holds under conditions (C1)--(C4) unless the within-community epidemic $\En(\bwn, I)$ is sufficiently close to criticality. %\annote{}{Added as FB suggested, with slight rewording to include $R_0>1$}(i.e.\ unless $R_0>1$ is sufficiently close to 1, see \eqref{R0def}).

To further illuminate the role/influence of condition (C5) above, we also note that using a more (respectively, less) stringent requirement to define a large within-community outbreak means that (C5) is replaced by a less (respectively, more) stringent condition. For example, if we take $G^{(n)}_W=\{Z_n\geq a \log n\}$, where $a \in (0,\infty)$, then (C5) is replaced by (C5a): there exists $\theta'\in(0,2a \log s_0)$ such that $\lim_{n\to\infty} m_n n^{-\theta'} =0$. Then (C1) and $\log s_0 >a^{-1}$ imply (C5a).
Alternatively, if we take $G^{(n)}_W= \{Z_n \geq  n^\beta\}$ (for $\beta\in(0,1/2)$) then (C1) is sufficient for the argument where we use (C5) in Appendix \ref{appB1}. This is qualitatively consistent with the above, in that the threshold for a large within-community outbreak has been made much more stringent and this results in no (C5)-like condition or condition on $s_0$ being required for Theorem~\ref{multifinalmain1}.
%\annote{}{Added last couple of sentences here. Wording OK? And is 1/2 an appropriate largest value to use for the range of possible $\beta$? FGB Slight change of wording.  We don't \emph{need} to modify (C5) when having a more stringent condition.  The BP approx breaks down if $\beta\ge 1/2$.}

\section{Numerical Examples}
\label{numerics}
In this section we present some numerical examples briefly illustrating our results. Figure~\ref{fig:FSapprox} demonstrates the use of two of our main limit theorems to approximate the distribution of $Z^{(n)}_T$,  the total number of individuals infected by an epidemic. We consider one situation with few large communities ($m=20$, $n=500$) and one with many large communities ($m=n=2000$); in both cases with two choices of infectious period distribution (exponential and constant, both with unit mean). Other model parameters are held fixed: $n\beta^{(n)}_W=\lambda_W=2$, $n^2m\beta^{(n)}_G=\lambda_G=6$.

We approximate the distribution of $Z^{(n)}_T$ in the event of a major outbreak in two different ways: (i) a mixture-of-normals approximation based on Theorem~\ref{fixedmasymmain}(c) and (ii) a normal approximation based on the second component of the vectors in Theorem~\ref{multifinalmain1}.

Note that the two approximations for the final size distribution are valid under different asymptotic regimes and use different characterisations of what a major outbreak is. The details are given in the lead-up to the statements of those theorems but, loosely speaking, the former considers the number of communities to be fixed and characterises a major outbreak as one in which there is a major outbreak within the community which contains the initial infective, whereas the latter considers the number of communities to be large and characterises a major outbreak as one in which the chain of between-community infections takes off and infects a significant fraction of communities. %(one in which the Reed-Frost epidemic of large within-community outbreaks described in Sec xxx takes off and infects a significant fraction of communities.)

This distinction is apparent on the first row of the figure, where we see that the Reed-Frost (RF) mixture-of-normals approximation characterises a range of final sizes, apart from those very close to zero which result from epidemics where the infection dies out in the initially infected community. The RF mixture approximation does capture the part of the distribution with final sizes less than around 3000, where there was a large outbreak in community 0 and a few other communities, but the process of community-to-community infection did not take off. On the other hand, both the RF mixture and the normal approximations do capture the part of the distribution that corresponds to epidemics of sizes roughly 5000--9000, where many communities are infected and experience large within-community epidemics, though obviously the simpler normal approximation captures this at only a very crude level.

Figure~\ref{fig:FSapprox} also illustrates the impact of the shape of the infectious period distribution. The more variable exponential infectious period distribution results in the within-community major outbreak probability $(1-\pi_W)$ being 0.5, compared to 0.8 for the fixed infectious period case. This is evident in the upper plots of the figure through the quite different final outcome distributions for the Reed-Frost `epidemic' of large within-community outbreaks; there is an appreciably larger probability of very few communities being infected and also more variability in the final size within the large outbreaks when $I \sim {\rm Exp}(1)$. The different infectious period distributions also manifest in the lower plots of the figure, which indicate that the quality of approximation derived from the CLT, though good, is affected by the infectious period distribution.

%from which it seems that quality of approximation derived from the CLT can be significantly affected by the infectious period distribution.

%We briefly present numerical examples illustrating some of the above results. Figure~\ref{fig:FSapprox} demonstrates the use of two of our main limit theorems to approximate the distribution of $Z^{(n)}_T$, the total number of individuals infected in the event of a major outbreak, in two different ways: (i) a mixture-of-normals approximation based on Theorem~\ref{fixedmasymmain}(c) (valid in the limit of a fixed number of large communities) and (ii) a normal approximation based on the second component of the vectors in Theorem~\ref{multifinalmain1} (valid in the limit of a large number of large communities).
%The upper plots have few large communities ($m=20$, $n=500$) and the lower plots have many large communities ($m=n=2000$); and the left and right plots use different infectious period distributions (exponential and fixed) with the same (unit) mean. Other model parameters are the same in all plots: $n\beta^{(n)}_W=\lambda_W=2$, $n^2m\beta^{(n)}_G=\lambda_G=6$.

%We see that the relevant asymptotic distributions seem to provide reasonable working approximations for the final size distributions investigated.
%We also see that the infectious period distribution has a substantial effect on the distribution of the outcome of the epidemic; and the lower plots of the figure suggest that the quality of approximation derived from the CLT can be significantly affected by the infectious period distribution.

\begin{figure}
\begin{center}
\begin{tabular}{ccc}
 (a) $I\sim\mbox{Exp}(1)$ &
 (b) $I\equiv1$ \\
 \includegraphics[width=\hfigwidth]{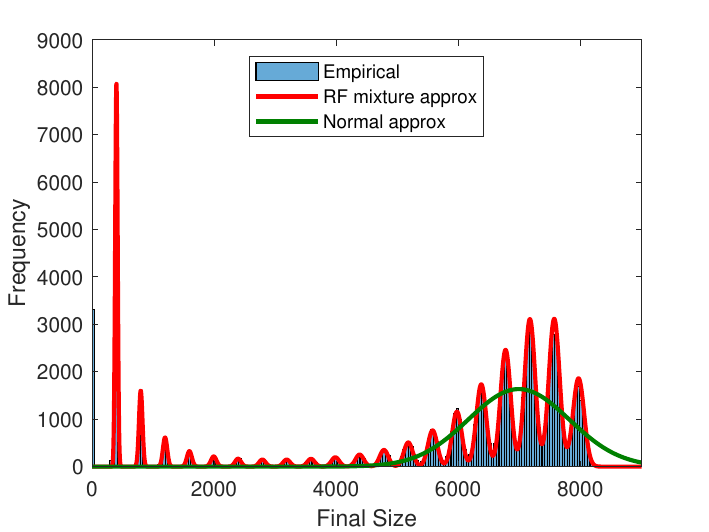} &
 \includegraphics[width=\hfigwidth]{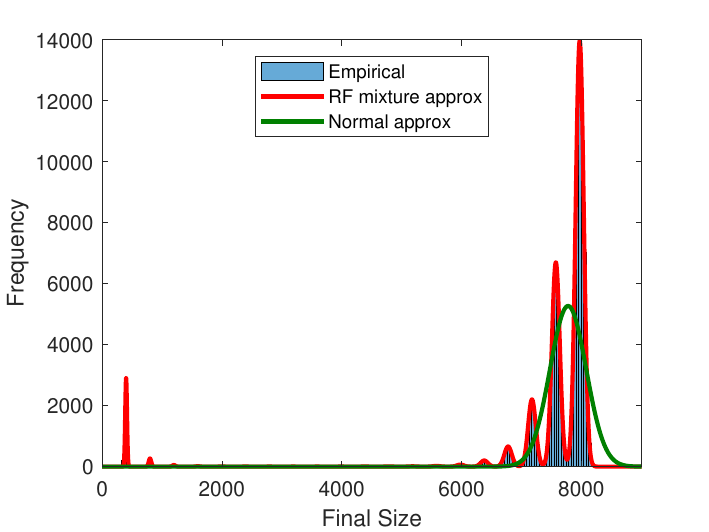} \\
 \includegraphics[width=\hfigwidth]{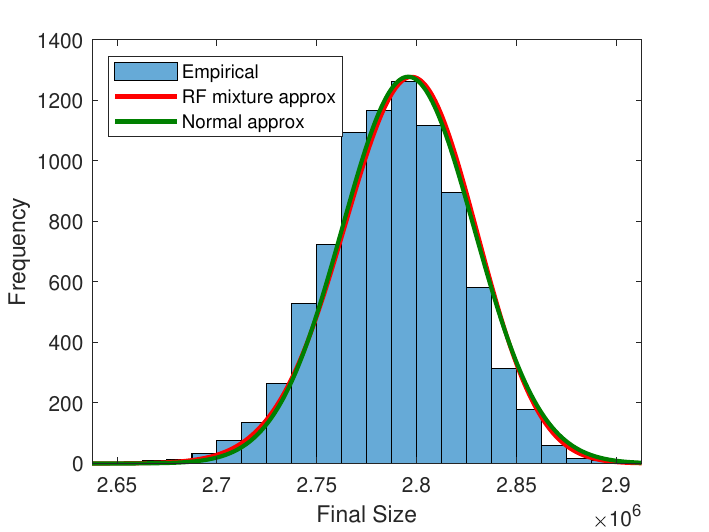} &
 \includegraphics[width=\hfigwidth]{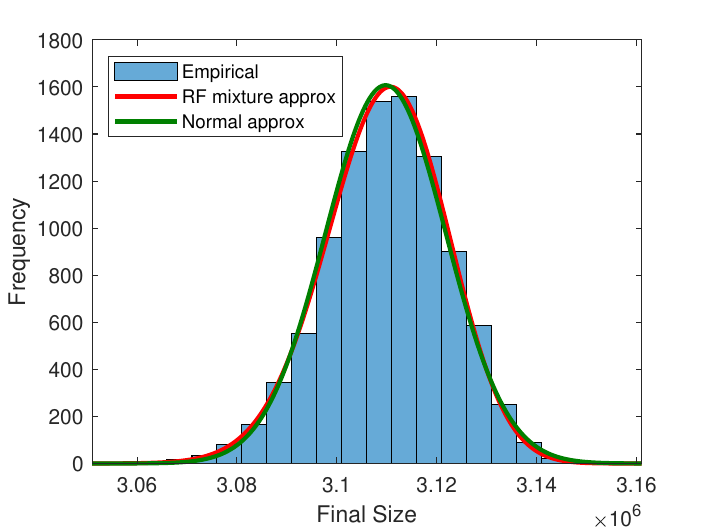}
\end{tabular}
\end{center}
\caption{Histograms of simulated final sizes for our model, with overlaid approximations based on Theorems~\ref{fixedmasymmain} and~\ref{multifinalmain1}. The upper plots have $m=20$, $n=500$ and the lower plots have $m=n=2000$. The left and right plots have infectious periods which are (a) exponentially distributed and (b) fixed, respectively, both with mean 1. Each plot is based on 10,000 simulations and in the lower plots we only display final sizes larger than the cutoff $0.15mn=0.6\times10^{6}$. Other parameters are $\lambda_W=n\beta^{(n)}_W=2$, $\lambda_G=n^2m\beta^{(n)}_G=6$. These parameters yield (a) $R_0=2$, $R_*\approx2.4$, $\pi_W=0.50$, $z_\infty\approx0.80$, $z(\tau)\approx0.70$ and (b)  $R_0=2$, $R_*\approx3.8$, $\pi_W=0.20$, $z_\infty\approx0.80$, $z(\tau)\approx0.78$.
}
\label{fig:FSapprox}
\end{figure}

%\pagebreak

\section{Asymptotic analysis of the model}
\label{rigorous}

Throughout this section we assume implicitly that the infectious period distribution satisfies $\E[I^{2+\alpha}]< \infty $ for some $\alpha>0$ and that the infection rate parameters satisfy~\eqref{infectionrates}, viz.\ $\bwn n \to \lw \in (0,\infty)$ and $\bgn n^2 m_n \to \lg \in (0,\infty)$. The further assumptions (C1)--(C5) are explicitly referred to as and when required.

\subsection{Embedding construction of final outcome}
\label{embedding}
To provide a rigorous asymptotic analysis of the final outcome of the epidemic in a population of communities, it is useful to extend the embedding arguments of \cite{Scal85,Scal90} and \cite{Ball97} to the present setting.

For fixed $n \in \mathbb{N}$, we define an epidemic process $\left\{\left(Z^{(n)}(t),A^{(n)}(t)\right):t \ge 0\right\}$ among a homogenously mixing population, $\mathcal{N}$ say, of $n$ individuals, all of whom are susceptible when $t=0$ and are also exposed to external infection.  In this construction the variable $t$ relates to cumulative time units of external infection experienced by the population and \emph{not} real time.
Let $\etan$ be a homogeneous Poisson process on $[0, \infty)$ with rate $\bgn n^2 m_n$ and recall from~\eqref{infectionrates} that $\bgn n^2 m_n \to \lg\in(0,\infty)$ as $n \to \infty$.  At each point of $\etan$, an individual is chosen uniformly at random from $\mathcal{N}$.  If the chosen individual is susceptible then it triggers an SIR epidemic among the susceptible individuals in $\mathcal{N}$, having size and severity distributed according to that of $\mathcal{E}^{(n_0)}(\bwn,I)$, where $n_0$ is the number of susceptibles in $\mathcal{N}$ immediately prior to the point of $\etan$.  (To be clear, the infection rate in $\mathcal{E}^{(n_0)}(\bwn,I)$ is $\bwn$, not $\beta^{(n_0)}_W$.)  The epidemic takes place instantaneously.  If the chosen individual is not susceptible then nothing happens.  The Poisson process $\etan$, uniform samplings and instantaneous epidemics are mutually independent.  For $t \ge 0$, let $Z^{(n)}(t)$ and $A^{(n)}(t)$ be respectively the sum of the sizes and severities of all epidemics that have occurred by `time' $t$.

%\annote{}{Is this enough re saying that these constructions are nearly the same thing? }
Note that, for fixed $t\ge0$, $(\Zn(t) , \An(t))$ as defined in the previous paragraph has the same distribution as the joint size and severity of $\En(t)$ defined in Definition~\ref{defEpsnt}. To see this, note that in $\En(t)$ each of the $n$ individuals is independently initially susceptible, with probability $\pi_n(t) = \exp(-\bgn n m_n t)$. In $(\Zn(t) , \An(t))$, a point in $\etan$ infects a given individual with probability $1/n$, so each individual is infected externally at rate $\bgn n^2 m_n / n$ and thus avoids external infection until time $t$ with probability $\P(\Po(\bgn n m_n t)=0) = \pi_n(t)$, independently of other individuals. The two different constructions %\annote{}{FB suggests adding ``$(Z^{(n)}(t), A^{(n)}(t))$ for fixed $t$'' here, but I wonder if leaving it out will be simpler and still clear?}
are useful in different parts of the rest of the manuscript: the process $\{(\Zn(t),\An(t)): t \ge 0\}$ provides a framework for proving limiting properties of the final outcome of the epidemic in a population of communities, whereas the epidemic $\En(t)$ is required for some calculations in the proofs.

We now describe the construction of the final outcome of a slightly modified model $\Enm_{\rm mod}$ for the epidemic amongst communities $1,2,\cdots,m_n$, which has (asymptotically) the same final outcome among those communities as $\Enm$ defined in Section~\ref{model} but is simpler to analyse.
For $n=1,2,\cdots$, we define the functions
$\left( \left(Z^{(n)}_i(\cdot), A_i^{(n)}(\cdot)\right), \, i=1,2,\cdots,m_n \right)$
to be i.i.d.\ copies of $\left(Z^{(n)}(\cdot),A^{(n)}(\cdot)\right)$.  For $t \ge 0$, let
\begin{equation*}
\zbn(t)=\sum_{i=1}^{m_n} Z^{(n)}_i(t) \qquad \mbox{and}\qquad \abn(t)=\sum_{i=1}^{m_n} A^{(n)}_i(t).
\end{equation*}
Suppose that each individual in the population of $m_n$ communities is initially exposed to $T_0^{(n)}$ time units of infection.  In $\Enm$, the total force of infection on a community if each of its members is exposed to $1$ time unit of infection from the population at large is $n\bgn$ $(=\bgn n^2 m_n/n m_n)$.  Thus, in view of the rate of the Poisson process $\etan$, in $\Enm_{\rm mod}$ we assume that if each individual in the population of $m_n$ communities is initially exposed to $T_0^{(n)}$ time units of infection, then the ensuing within-community epidemics give rise to a further $\abn(\tildetn_0)$ time units of infection, where $\tildetn_0=T_0^{(n)}/(n m_n)$, which may in turn give rise to further time units of infection.  For $k=0,1,\cdots$, let
\begin{equation}
\label{Tkiteration}
T_{k+1}^{(n)}=T_0^{(n)}+\abn(\tildetn_k),
\end{equation}
where $\tildetn_k=T_k^{(n)}/(n m_n)$.  Then $k^{(n)}_*=\min\{k:T_{k+1}^{(n)}=T_k^{(n)}\}$ is well defined since the population is finite.  Let $\tinfn={T_{k^{(n)}_*}^{(n)}}$ and $\tildetinfn=\tinfn/(nm_n)$.  Then  $\tildetinfn$ is given by
\begin{equation}
\label{Tinfinity}
\tildetinfn=\min\{t\ge 0:t=\tildetn_0+\tildeabn(t)\},
\end{equation}
where $\tildeabn(t)=(nm_n)^{-1}\abn(t)$, and the total size and severity of the epidemic in the $m_n$ communities are given by $\zbn(\tildetinfn)$ and
$\abn(\tildetinfn)$, respectively.

To connect more concretely with $\Enm$, we are concerned with the case when within-community epidemics are supercritical (i.e.\ $R_0=\lw \mu_I>1$) and the within-community epidemic in community $0$, triggered by the initial infective, takes off (i.e.\ has size at least $\log n$).  (If this epidemic does not take off then by Theorem~\ref{singleSIR}(b)(i), the expected total number of external contacts that emanate from community $0$ is bounded above by $\beta^{(n)}_G \mu_I m_n n \log n$, which tends to $0$ as $n \to \infty$, so the probability that the epidemic remains solely within community $0$ tends to 1 as $n \to \infty$.)
In the proof of Theorem~\ref{fixedmasymmain} we take $T_0^{(n)}$ to be the severity of the within-community epidemic in community 0 (i.e.\ ignoring contacts involving other communities), assuming that it takes off.
The modified model $\Enm_{\rm mod}$ differs from $\Enm$ in that it does not permit individuals in communities $1,2,\cdots,m_n$ to infect individuals in community $0$.  The following lemma shows that this difference has no material effect under the asymptotic schemes we consider.

For the epidemic model $\Enm$, let $Z^{(n)}_0$ and $T_0^{(n)}$ be the size and severity of the \emph{within-community} epidemic in community $0$ triggered by the initial infective. Further, let $Z^{(n)}_{0,+}$ be the number of individuals in community 0 that are infected in the course of $\Enm$ and let $A_0^{(n)}$ be the sum of their infectious periods, and write $\barzn_{0,+}=Z^{(n)}_{0,+}/n$, $\barzn_0=\Zn_0/n$, $\bartn_0=\Tn/n$, $\baran_0=\An_0/n$ and $\tildean_0=A_0^{(n)}/(nm_n)$.  As we are concerned with only the final outcome of the epidemic, we can assume that the within-community $0$ epidemic takes place first and that community 0 is exposed to external infection later.

\begin{lemma}
\label{initialsev}
Suppose that $R_0=\lw \mu_I > 1$.  Then, as $n\to\infty$,
\begin{equation}
\tildean_0 - \tildetn_0 \mid \Zn_0 \ge \log n \convp 0
\label{eq:AT0nDiff}
\end{equation}
and
\begin{equation}
\sqrt{n}(\barzn_{0,+} - \barzn_0) \mid \Zn_0 \ge \log n \convp 0 \quad \mbox{and} \quad \sqrt{n}(\baran_0 - \bartn_0) \mid \Zn_0 \ge \log n \convp 0 .
\label{eq:Z0nDiff}
\end{equation}
\end{lemma}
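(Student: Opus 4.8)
\subsection*{Proof proposal for Lemma~\ref{initialsev}}

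The plan is to work throughout conditional on the event $\{\Zn_0\ge\log n\}$, and to show that the number $\Zn_{0,+}-\Zn_0$ of individuals of community $0$ that are infected during $\Enm$ by routes other than the initial local chain is $O_{\rm p}(1)$. Both assertions of the lemma then follow at once: \eqref{eq:Z0nDiff} because $\sqrt n\,(\barzn_{0,+}-\barzn_0)=(\Zn_{0,+}-\Zn_0)/\sqrt n$, and \eqref{eq:AT0nDiff} because $\tildean_0-\tildetn_0=(A_0^{(n)}-T_0^{(n)})/(nm_n)$ with $A_0^{(n)}-T_0^{(n)}\stlt\sum_{j=1}^{\Zn_{0,+}-\Zn_0}I'_j$, a random sum of $O_{\rm p}(1)$ i.i.d.\ copies $I'_j$ of $I$ and hence itself $O_{\rm p}(1)$, while both normalisations $\sqrt n$ and $nm_n$ diverge. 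Note that the within-community epidemic in community $0$ coincides with the single-population epidemic $\En(\bwn,I)$ of Section~\ref{sec:singleComm}, so on $\{\Zn_0\ge\log n\}$ the large-outbreak conclusions of Theorem~\ref{singleSIR}(b) apply to it.

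The first step is to bound the external infectious pressure on community $0$. Assign to every individual in the whole population of size $n(m_n+1)$ an infectious period $I_1,\dots,I_{n(m_n+1)}$ (whether or not it is ultimately infected), and to each ordered pair of individuals an independent homogeneous Poisson process of rate $\bgn$ of potential global contacts. The total number $C^{(n)}_0$ of global infectious contacts ever directed at the $n$ individuals of community $0$ is then bounded above by the number of points of these processes that fall in the corresponding infectious windows, which conditional on $(I_k)$ is $\Po\!\big(\bgn n\sum_{k=1}^{n(m_n+1)}I_k\big)$. Writing $\bgn n\sum_k I_k=\big((m_n+1)n^2\bgn\big)\cdot\big(\tfrac{1}{n(m_n+1)}\sum_k I_k\big)$, the first factor tends to $\lg$ by~\eqref{infectionrates} and the second converges in probability to $\mu_I$ by the weak law of large numbers, so $\bgn n\sum_k I_k\convp\lg\mu_I$ and hence $C^{(n)}_0=O_{\rm p}(1)$. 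The only ingredients are $\bgn n^2 m_n\to\lg$ and $\E[I]<\infty$; in particular nothing is assumed about the behaviour of the other communities, so the bound is valid under both asymptotic regimes.

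The second step is a subcriticality argument for the secondary spread within community $0$. Every individual of community $0$ infected during $\Enm$ but not in the initial local chain either (i)~receives a global contact while susceptible --- there are at most $C^{(n)}_0$ such individuals --- or (ii)~is infected through a chain of within-community contacts originating from such an individual and running within the pool of $n-\Zn_0$ individuals not in the initial chain. Hence $\Zn_{0,+}-\Zn_0$ is stochastically dominated by the total progeny of a Galton--Watson process with at most $C^{(n)}_0$ ancestors in which an infective with infectious period $I_k$ has offspring distribution $\Bin(n-\Zn_0,1-\e^{-\bwn I_k})$, of mean at most $(n-\Zn_0)\bwn\mu_I$. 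By Theorem~\ref{singleSIR}(b)(ii), $(n-\Zn_0)\bwn=(1-n^{-1}\Zn_0)(n\bwn)\convp(1-\zinf)\lw$ given $\{\Zn_0\ge\log n\}$, so this offspring mean converges in probability to $(1-\zinf)R_0$, which is strictly less than $1$ by the final paragraph of Section~\ref{sec:singleComm}. Fixing $\rho\in((1-\zinf)R_0,1)$, on the event of conditional probability tending to $1$ that the offspring mean is at most $\rho$ the dominating process is subcritical, so each of its at most $C^{(n)}_0$ subtrees has total progeny of finite mean $(1-\rho)^{-1}$; combined with $C^{(n)}_0=O_{\rm p}(1)$ this yields $\Zn_{0,+}-\Zn_0=O_{\rm p}(1)$, which completes the argument as indicated above.

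I expect the main obstacle to be the first step: making the $O_{\rm p}(1)$ bound on $C^{(n)}_0$ genuinely rigorous through a coupling (revealing all infectious periods and all potential-contact Poisson processes up front) that does not implicitly presuppose control of the rest of the epidemic, so that one argument serves both the fixed-$m$ and the diverging-$m$ regimes. The branching-process domination in the second step is routine but needs a little care, since the dominating offspring mean is itself a random quantity that must first be confined below $1$ with high probability.
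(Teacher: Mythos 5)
Your proposal is correct and follows essentially the same strategy as the paper's proof: bound the total number of global contacts directed at community 0 by assuming everyone is infected (yielding a tight random variable, since $(m_n+1)n^2\bgn\mu_I\to\lg\mu_I$), and then dominate the resulting secondary within-community spread by a subcritical branching process with offspring mean $\le n\bwn(1-\barzn_0)\mu_I\convp(1-\zinf)R_0<1$. The only cosmetic difference is that the paper works with expectations (invoking Wald's identity for epidemics to pass from size to severity) rather than your $O_{\rm p}(1)$ bookkeeping.
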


\begin{proof} Throughout the proof we assume implicitly that $\{\Zn_0 \ge \log n\}$ occurs and suppress the explicit conditioning on that event.  Thus, by Theorem~\ref{singleSIR}(b)(ii), $\barzn_0=\frac{1}{n}Z^{(n)}_0 \convp \zinf$ as $n \to \infty$.
Let $N^{(n)}_{C0}$ be the total number of contacts made by infectives in communities $1,2,\cdots,m_n$ to individuals in community $0$ during the course of $\Enm$.  Then $N^{(n)}_{C0}$ is bounded above by the number of such contacts if every member of communities $1,2,\cdots,m_n$ is infected, so $\E[N^{(n)}_{C0}] \le m_n n^2 \bgn \mu_I$.   Then, given $Z^{(n)}_0$, the process of infectives in the within-community epidemic initiated by any of the above $N^{(n)}_{C0}$ contacts is bounded above by the branching process $\BB\left(n\bwn(1-\barzn_0) I \right)$.

Now, recalling from Section~\ref{sec:singleComm} that $(1-\zinf)R_0<1$, fix the constant $\theta \in  ((1-\zinf)R_0,1)$.  If $n \bwn (1-\bar{Z}^{(n)}_0) \mu_I \le \theta$ then, given $Z^{(n)}_0$, the mean total progeny including the ancestor of $\BB\left(n\bwn(1-\barzn_0)I\right)$ is bounded above by $(1-\theta)^{-1}$.   Let $Z^{(n)}_{C0}$ and $A^{(n)}_{C0}$ denote respectively the total number of infectives and severity of the epidemics in community $0$ triggered by the $N^{(n)}_{C0}$ outside contacts.
Note that $\Zn_{C0}=\Zn_{0,+}-\Zn_0$ and $A^{(n)}_{C0}=A^{(n)}_0-\Tn_0$.
Fix $\epsilon>0$.  Then,
\begin{align}
\label{eq:pzncogteps}
\P\left(n^{-1/2}\Zn_{C0} > \epsilon\right)&=\P\left(n^{-1/2}\Zn_{C0} > \epsilon \mid n \bwn (1-\barzn_0) \mu_I > \theta\right)\P\left(n \bwn (1-\barzn_0) \mu_I > \theta\right)\\
&\,\,+\P\left(n^{-1/2}\Zn_{C0} > \epsilon \mid n \bwn (1-\barzn_0) \mu_I\le \theta\right)\P\left(n \bwn (1-\barzn_0) \mu_I\le \theta\right).\nonumber
\end{align}
Now, $\lim_{n \to \infty} \P(n \bwn (1-\barzn_0) \mu_I > \theta)=0$, by Theorem~\ref{singleSIR}(b)(ii), so the first term on the right-hand side of~\eqref{eq:pzncogteps} tends to $0$ as $n \to \infty$.  Further,
\begin{equation*}
\E \left[n^{-1/2}Z^{(n)}_{C0}\mid n \bwn (1-\barzn_0) \mu_I\le \theta \right] \le n^{-1/2}m_n n^2 \bgn \mu_I (1-\theta)^{-1} \to 0 \quad \mbox{as } n \to \infty,
%\label{eq:ZC0bound}
\end{equation*}
since $m_n n^2 \bgn \to \lg$ as $n \to \infty$.  Application of Markov's inequality then shows that the second term on the right-hand side of~\eqref{eq:pzncogteps} tends to $0$ as $n \to \infty$, and the first result in~\eqref{eq:Z0nDiff} follows as $\epsilon>0$ is arbitrary.  The second result in~\eqref{eq:Z0nDiff} follows by a similar argument, since by Wald's identity for epidemics \cite[Corollary~2.2]{Ball86},
\[
\E [A^{(n)}_{C0}\mid n \bwn (1-\barzn_0) \mu_I\le \theta]=\mu_I \E[Z^{(n)}_{C0}\mid n \bwn (1-\barzn_0) \mu_I\le \theta].
\]

Finally, \eqref{eq:AT0nDiff} follows from the second result in~\eqref{eq:Z0nDiff}, as $0 \le \tildean_0 - \tildetn_0 \le \baran_0 - \bartn_0$.
\end{proof}

Lemma~\ref{initialsev} justifies our approach in proving Theorem~\ref{fixedmasymmain} of studying the final outcome of the model $\Enm$ in the event of a large outbreak in community 0 through the modified model with $\tildetn_0$ units of initial external infectious pressure.  Effectively we initially expose communities $1,2,\cdots,m_n$ to $\tildetn_0$ units of external infectious pressure as a convenient `substitute' for conditioning on a large outbreak in community 0.
 Note that the modified epidemic $\Enm_{\rm mod}$ is a lower bound for the `true' model $\Enm$, and a branching process which provides an upper bound for $\Enm$ is given in the first paragraph of the proof of Lemma~\ref{initialsev}.

The following lemma is required in the sequel.  Let $\zbarnc=\{\bar{Z}^{(n)}(t):t \ge 0\}$ and $\abarnc=\{\bar{A}^{(n)}(t):t \ge 0\}$, where $\bar{Z}^{(n)}(t)=\frac{1}{n}Z^{(n)}(t)$ and $\bar{A}^{(n)}(t)=\frac{1}{n}A^{(n)}(t)$.  Also, for any $k \in \mathbb{N}$, let
$\Rightarrow$ denote weak convergence in the space $D_{\mathbb{R}^k}[0,\infty)$ of right-continuous functions $f:[0,\infty) \to \mathbb{R}^k$ having limits from the left (i.e.~c\`{a}dl\`{a}g functions), endowed with the Skorohod metric (e.g.~\cite[Chapter 3]{Ethi86}).

\begin{lemma}
\label{ZAnweak}
As $n \to \infty$,
\begin{equation}
\label{zanweak}
\left(\zbarnc,\abarnc\right) \Rightarrow \Psi(\cdot)(1,\mu_I)\zinf,
\end{equation}
where $\Psi(\cdot)=\{\Psi(t):t \ge 0\}$, with $\Psi(t)=1_{\{L \le t\}}$ and $L\sim {\rm Exp}(\lg(1-\pi_W))$.
\end{lemma}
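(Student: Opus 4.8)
The plan is to work directly with the embedding construction of $\{(\Zn(t),\An(t)):t\ge0\}$: after rescaling by $n$ the paths should consist of a single macroscopic jump, of size tending to $\zinf(1,\mu_I)$, at a time converging in distribution to $L$, plus a uniformly negligible remainder. Enumerate the points of $\etan$ as $0<\xi_1<\xi_2<\cdots$, let $Z^{(n)}_{(i)}$ and $A^{(n)}_{(i)}$ be the size and severity of the within-$\mathcal N$ epidemic triggered at $\xi_i$ (taken to be $0$ if the individual sampled at $\xi_i$ is not susceptible), and put $\Sigma^{(n)}_i=\sum_{j\le i}Z^{(n)}_{(j)}$. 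Let $K^{(n)}=\min\{i\ge1:Z^{(n)}_{(i)}\ge\log n\}$ be the index of the first major triggered epidemic and $L^{(n)}=\xi_{K^{(n)}}$. Since the points of $\etan$, the uniform samplings and the triggered epidemics are mutually independent, $K^{(n)}$ is independent of the i.i.d.\ $\mathrm{Exp}(\bgn n^2 m_n)$ inter-point gaps, so $L^{(n)}=\sum_{i=1}^{K^{(n)}}E_i$ is a geometric-type sum of exponentials.

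The first and main step is to show that $K^{(n)}$ converges in distribution to a geometric random variable on $\{1,2,\dots\}$ with success probability $1-\pi_W$; in particular $K^{(n)}$ is tight, whence $\Sigma^{(n)}_{K^{(n)}-1}\le (K^{(n)}-1)\log n=o_p(n)$. One proves this by induction on the number of trials: conditionally on $\{K^{(n)}>i\}$ and the history up to $\xi_i$, the point $\xi_{i+1}$ samples a susceptible with probability $(n-\Sigma^{(n)}_i)/n$ and then triggers $\mathcal{E}^{(n-\Sigma^{(n)}_i)}(\bwn,I)$, which is major with the corresponding probability from Theorem~\ref{singleSIR}(b); on $\{K^{(n)}>i\}$ for fixed $i$ we have $n-\Sigma^{(n)}_i\ge n-i\log n$, so $(n-\Sigma^{(n)}_i)\bwn\to\lw$ and $(n-\Sigma^{(n)}_i)/n\to1$, and a mild uniform-in-population-size version of Theorem~\ref{singleSIR}(b) (obtainable e.g.\ by a monotone coupling, since $Z_m\stlt Z_{m+1}$) then gives $\P(K^{(n)}=i+1\mid K^{(n)}>i)\to1-\pi_W$ and hence $\P(K^{(n)}>i+1)\to\pi_W\,\P(K^{(n)}>i)$. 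Combined with $\bgn n^2 m_n\to\lg$, the geometric-sum structure yields $L^{(n)}\convD \mathrm{Exp}(\lg(1-\pi_W))=L$ (compute $\E[\e^{-sL^{(n)}}]$ by conditioning on $K^{(n)}$ and let $n\to\infty$). I expect this step to be the main obstacle, as it requires tracking the cumulative number infected before the first major outbreak and a uniform form of Theorem~\ref{singleSIR}(b); the remaining steps are comparatively routine.

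Next I would identify the jump and control the remainder. Since $\Sigma^{(n)}_{K^{(n)}-1}=o_p(n)$, the first major epidemic is $\mathcal{E}^{(n-\Sigma^{(n)}_{K^{(n)}-1})}(\bwn,I)$ conditioned to have size at least $\log n$, with $(n-\Sigma^{(n)}_{K^{(n)}-1})/n\to1$, so Theorem~\ref{singleSIR}(b)(ii) gives for the normalised jump $J^{(n)}:=n^{-1}\bigl(Z^{(n)}_{(K^{(n)})},A^{(n)}_{(K^{(n)})}\bigr)$ that $J^{(n)}\convp\zinf(1,\mu_I)$. Writing $R^{(n)}(t):=(\barzn(t),\baran(t))-J^{(n)}1_{\{L^{(n)}\le t\}}$ and fixing $T>0$, there are $O_p(1)$ points of $\etan$ in $[0,T]$, and with probability tending to $1$ every triggered epidemic in $[0,T]$ other than the first major one has size below $\log n$: those preceding $L^{(n)}$ by the definition of $K^{(n)}$, and those following $L^{(n)}$ because the residual number of susceptibles is at most $n-Z^{(n)}_{(K^{(n)})}=(1-\zinf+o_p(1))n$, so (as $(1-\zinf)R_0<1$, noted after Theorem~\ref{singleSIR}) those epidemics are dominated by a subcritical Galton--Watson process whose total progeny has finite mean. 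Hence, on an event of probability $\to1$, $\sup_{t\le T}\|R^{(n)}(t)\|$ is bounded by $O(\log n)$ infections, together with their severities, all divided by $n$; since $\E[I]=\mu_I<\infty$ (use Wald's identity), this is $o_p(1)$.

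Finally I would assemble the pieces by continuous mapping. Since $J^{(n)}\convp\zinf(1,\mu_I)$ and $\sup_{t\le T}\|R^{(n)}(t)\|\convp0$ (deterministic limits) while $L^{(n)}\convD L$, we get the joint convergence $(L^{(n)},J^{(n)},R^{(n)}(\cdot))\convD(L,\zinf(1,\mu_I),\mathbf{0})$ in $(0,\infty)\times\mathbb R^2\times D_{\mathbb R^2}[0,T]$. The map $(\ell,j,g)\mapsto\bigl(j_1 1_{\{\ell\le\,\cdot\,\}}+g_1,\,j_2 1_{\{\ell\le\,\cdot\,\}}+g_2\bigr)$ into $D_{\mathbb R^2}[0,T]$ is continuous at every $(\ell,j,\mathbf{0})$ with $\ell\ne T$ --- because $\ell\mapsto 1_{\{\ell\le\,\cdot\,\}}$ is Skorohod-continuous for $\ell\ne T$, Skorohod convergence to the continuous limit $\mathbf{0}$ is uniform, and the Skorohod distance is dominated by the uniform distance under additive perturbations --- while $\P(L=T)=0$. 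Applying this map, which sends $(L^{(n)},J^{(n)},R^{(n)})$ to $(\zbarnc,\abarnc)$ restricted to $[0,T]$, gives $(\zbarnc,\abarnc)\Rightarrow\Psi(\cdot)\zinf(1,\mu_I)$ in $D_{\mathbb R^2}[0,T]$. Since $T>0$ is arbitrary and $\P(L=T)=0$ for every $T$, this upgrades to weak convergence in $D_{\mathbb R^2}[0,\infty)$, which is~\eqref{zanweak}.
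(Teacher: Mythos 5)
Your argument is correct and follows the same overall strategy as the paper: identify $K^{(n)}$, the index of the first triggered major outbreak, show it is asymptotically geometric with success probability $1-\pi_W$, show the first major outbreak contributes a jump of size converging to $\zinf(1,\mu_I)$, and show everything else (preceding minor outbreaks, and subcritical post-outbreak re-introductions) is uniformly $o_p(1)$ after dividing by $n$. The technical execution of the final step, however, differs. The paper couples all the $\etan$ to a single fixed rate-$\lg$ Poisson process $\eta$ by rescaling its points, then invokes the Skorohod representation theorem to pass to almost-sure convergence of $(K^{(n)},\barzn(r^{(n)}_k),\baran(r^{(n)}_k))$ on a single probability space, and finally constructs explicit piecewise-linear time changes to verify Skorohod convergence pathwise. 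You instead work directly with the random time $L^{(n)}=\xi_{K^{(n)}}$, prove $L^{(n)}\convD L$ via the geometric-sum-of-exponentials structure (using independence of $K^{(n)}$ from the inter-point gaps), isolate the jump $J^{(n)}\convp\zinf(1,\mu_I)$ and the remainder $R^{(n)}$, and then apply the continuous mapping theorem to $(\ell,j,g)\mapsto j\,1_{\{\ell\le\cdot\}}+g$, using the Skorohod-continuity of the indicator map away from $\ell\in\{0,T\}$ and the fact that Skorohod distance is bounded by sup-norm distance under additive perturbations. Your route avoids the Skorohod representation theorem and the explicit construction of time changes, which is slightly cleaner; the paper's coupling via a common $\eta$ has the advantage of reusing the same device later (e.g.\ Theorem~\ref{fixedmasymmain}, where the $\Psi_i$ are built jointly and the joint CLT with the $Y_i$ is needed). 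Both are valid; you should also note, as you do implicitly, that the observation $(1-\zinf)R_0<1$ from the end of Section~\ref{sec:singleComm} is what makes the post-outbreak contribution negligible, which mirrors the paper's use of the subcritical branching-process bound $\BB(\theta\mu_I^{-1}I)$.
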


\begin{proof} Let $(\Omega, \mathcal{F}, P)$ be a probability space on which is defined a homogeneous Poisson process $\eta$ on $[0,\infty)$ having rate $\lg$ and let $0<r_1<r_2<\cdots$ denote the times of the points in $\eta$.   For $n=1,2,\cdots$, let $\etan$ denote the point process with points at $0<r_1^{(n)}<r_2^{(n)}<\cdots$, where $r_k^{(n)}=\frac{\lg}{\bgn n^2 m_n }r_k$ $(k=1,2,\cdots)$, so
$\lim_{n \to \infty} r_k^{(n)}=r_k$ $(k=1,2,\cdots)$. (See the second paragraph of Section~\ref{embedding} for an interpretation of $\eta^{(n)}$; unadorned $\eta$ is the corresponding limiting process.)

For $n,k=1,2,\cdots$, let $\checkzn_k$  be the size of the epidemic initiated by the $k$-th point in $\etan$.  For $n=1,2,\cdots$, let $K^{(n)}=\min\{k:\checkzn_k \ge \log n\}$.  Then Theorem~\ref{singleSIR}(b)(i) implies that
\begin{equation}
\label{Knconvd1}
\lim_{n \to \infty}\P\left(K^{(n)}=k\right)=\pi_W^{k-1}(1-\pi_W)\qquad (k=1,2,\cdots),
\end{equation}
and Theorem~\ref{singleSIR}(b)(ii) implies that
\begin{equation*}
\barzn(r^{(n)}_{K^{(n)}}) \convp \zinf \qquad \mbox{as } n \to \infty.
\end{equation*}
Clearly, $\barzn(r^{(n)}_k) \convp 0$  for $k=1,2,\cdots, K^{(n)}-1$.

Let $\theta \in ((1-\zinf)R_0,1)$. Then arguing as in the proof of Lemma~\ref{initialsev}, for any $k> K^{(n)}$, the epidemic initiated by the $k$-th point in $\etan$ is bounded above by the (subcritical) branching process $\BB(\theta \mu_I^{-1} I)$ with probability tending to one as $n \to \infty$.  Thus, for any $k> K^{(n)}$,   by Markov's inequality,
\begin{equation*}
\lim_{n \to \infty} \P\left(\checkzn_k \ge \log n\right) \le \lim_{n \to \infty} \frac{1}{(1-\theta) \log n}=0.
\end{equation*}
Hence, as $n \to \infty$,
\begin{equation}
\label{zbarconvp}
\barzn(r^{(n)}_k)-\zinf 1_{\{K^{(n)} \le k\}} \convp 0.
\end{equation}
A similar argument shows that, as $n \to \infty$,
\begin{equation}
\label{abarconvp}
\baran(r^{(n)}_k)-\mu_I \zinf 1_{\{K^{(n)} \le k\}} \convp 0.
\end{equation}
Further,~\eqref{Knconvd1} implies that
\begin{equation}
\label{Knconvd2}
K^{(n)} \convD K \qquad \mbox{as } n \to \infty,
\end{equation}
where $\P(K=k)=\pi_W^{k-1}(1-\pi_W)$ $(k=1,2,\cdots)$.

The Skorohod representation theorem implies that there exists a probability space $(\Omega, \mathcal{F}, P)$,
on which is defined the Poisson process $\eta$ (and hence also the Poisson processes $\etan, n=1,2,\cdots$) and random variables $K, K^{(n)}, \barzn(r^{(n)}_k)$ and $\baran(r^{(n)}_k)$ $(n,k=1,2 \cdots)$, so that the convergence in~\eqref{zbarconvp}, \eqref{abarconvp} and~\eqref{Knconvd2} holds almost surely. Thus, there exists $F \in \mathcal{F}$, with $\P(F)=1$, such that for all $\omega \in F$,
$\displaystyle \lim_{k \to \infty} r_k(\omega) = \infty$,  $\displaystyle \lim_{n \to \infty}K^{(n)}(\omega) = K(\omega)$, $\displaystyle \lim_{n \to \infty} \barzn(r^{(n)}_k)(\omega)  = \zinf 1_{\{k \ge K^{(n)}(\omega)\}}$
and
$\displaystyle \lim_{n \to \infty} \baran(r^{(n)}_k)(\omega)=\zinf\mu_I 1_{\{k \ge K^{(n)}(\omega)\}}.$
It follows that for all $\omega \in F$,
\begin{equation}
\label{uniconv1}
\lim_{n \to \infty} \sup_{0 \le t <r_{K^{(n)}(\omega)}(\omega)}\max\left\{\left|\barzn(t,\omega)\right|,\left|\baran(t,\omega)\right|\right\} =0
\end{equation}
and, for any $T>0$,
\begin{equation}
\label{uniconv2}
\lim_{n \to \infty} \sup_{r_{K^{(n)}(\omega)}(\omega) \le t \le T}\max\left\{\left|\barzn(t,\omega)-\zinf \right|,
\left|\baran(t,\omega)-\zinf \mu_I \right|\right\}=0.
\end{equation}

Fix $\omega \in F$ and define the function $\Psi(\cdot,\omega):[0,\infty)\to \mathbb{R}$ by $\Psi(t,\omega)=1_{\{t\ge
r_{K(\omega)}\}}$.  Observe that $\Psi(\cdot)$ has the same distribution as in the statement of the lemma, as~\eqref{Knconvd2} implies that $r_K \sim {\rm Exp}(\lg(1-\pi_W))$.  For $g,h \in D_{\mathbb{R}^2}[0,\infty)$, let $d(g,h)$ denote the distance between $g$ and $h$ in the Skorohod metric \citep[see][Chapter 3.5]{Ethi86}.  By Proposition 5.3 on page 119 of~\citet{Ethi86},
\begin{equation*}
d\left(\left(\barzn(\cdot,\omega),\baran(\cdot,\omega) \right), \Psi(\cdot,\omega)(1,\mu_I)\zinf\right) \to 0 \qquad \mbox{as } n \to \infty
\end{equation*}
if, for each $T>0$, there exists a sequence $(\lambda_n)$ of strictly increasing functions mapping $[0, \infty)$ onto $[0, \infty)$ such that
\begin{equation}
\label{skor1}
\lim_{n \to \infty} \sup_{0 \le t \le T}|\lambda_n(t)-t|=0
\end{equation}
and
\begin{equation}
\label{skor2}
\lim_{n \to \infty} \sup_{0 \le t \le T}\left|\Psi(\lambda_n(t),\omega)(1,\mu_I)\zinf-\left(\barzn(t,\omega),\baran(t,\omega)\right)\right|=0,
\end{equation}
where, for $x,y \in \mathbb{R}^2$, $|x-y|$ denotes Euclidean distance.

If $T<r_{K(\omega)}(\omega)$ then~\eqref{skor1} and~\eqref{skor2} are satisfied when $\lambda_n$ is the identity map for each $n$, since $r_{K^{(n)}(\omega)}(\omega) \to r_{K(\omega)}(\omega)$, so~\eqref{uniconv1} implies that~\eqref{skor2} holds.  If $T\ge r_{K(\omega)}(\omega)$, choose $\delta \ge 0$ so that $T+\delta>r_{K(\omega)}(\omega)$ and let $\lambda_n$ be the piecewise-linear function joining $(0,0), (r_{K^{(n)}(\omega)}(\omega),r_{K(\omega)}(\omega))$ and $(T+\delta,T+\delta)$, with $\lambda_n(t)=t$ for $t > T+\delta$.  Then~\eqref{skor1} holds as $r_{K^{(n)}(\omega)}(\omega) \to r_{K(\omega)}(\omega)$ as $n \to \infty$, and~\eqref{uniconv1} and~\eqref{uniconv2} imply that~\eqref{skor2} holds, since $t \in  [0, r_{K^{(n)}(\omega)}(\omega)]$ if and only if $\lambda_n(t) \in [0,r_{K(\omega)}(\omega)]$.

Now $\P(F)=1$, so $\left(\barzn(\cdot), \baran(\cdot)\right)$ converges almost surely (and hence weakly) to $\Psi(\cdot)(1,\mu_I)\zinf$ as $n \to \infty$, which completes the proof.
\end{proof}

\subsection{Fixed number of communities; proof of Theorem \ref{fixedmasymmain}}
\label{fixedm}
Suppose that $m_n=m$ for all $n=1,2,\cdots$.  Recall that $\Enm$ denotes the multi-community epidemic defined in Section~\ref{model}.  Further, for $n=1,2,\cdots,$ $Z^{(n)}_T$ and $A^{(n)}_T$ are the total number of individuals infected in and severity of $\Enm$, respectively, and $Z^{(n)}_C$ is the number of communities that experience epidemics of size at least $\log n$; each over all $m+1$ communities.  We consider the case that the epidemic in community $0$ infects at least $\log n$ individuals. We prove that in that case $Z^{(n)}_C$ converges in distribution to the final size of a Reed-Frost epidemic (as defined in Section~\ref{sec:multiComm}), and derive an associated conditional law of large numbers and central limit theorem for $(\barzn_T, \baran_T)=n^{-1}(Z^{(n)}_T, A^{(n)}_T)$.

We first give a \citet{Sell83} construction  of the final size of the above-mentioned Reed-Frost epidemic.  Consider a single population epidemic, with initially one infective, labelled $0$, and $m$ susceptibles, labelled $1,2,\cdots,m$.  Infectives have constant infectious periods of length $\zinf \mu_I/m$, during which they contact any given susceptible at the points of a Poisson process having rate $\lg(1-\pi_W)$.  The final size of this epidemic has the same distribution as that of the Reed-Frost epidemic with initially one infective and $m$ susceptibles, having pairwise-infection probability
$p_{\rm RF}$ given by~\eqref{RFprob}.
Recall that $Z^{(m)}_{\rm RF}$ denotes the final size of this epidemic, including the initial infective.  A realisation of $Z^{(m)}_{\rm RF}$ can be constructed by letting $L_1',L_2',\cdots,L_m'$ be i.i.d.\ ${\rm Exp}(\lg(1-\pi_W))$ random variables and setting
\[
Z^{(m)}_{\rm RF}=\min\{k:k \zinf \mu_I/m < L_{(k)}'\},
\]
 where $L_{(1)}',L_{(2)}',\cdots, L_{(m)}'$ are the order statistics of $L_1',L_2',\cdots,L_m'$ and $L_{m+1}'=\infty$ (cf.\ \citet[Equation (2.1)]{Ball86}).  Note that, for $k=1,2,\cdots,m$,
 \begin{equation*}
 k\zinf \mu_I/m <L_{(k)}' \iff \sum_{i=1}^m 1_{\{L_i' \le k \zinf \mu_I/m\}}\le k-1,
 \end{equation*}
 so $Z^{(m)}_{\rm RF}$ admits the representation
 \begin{equation}
 \label{RFsize}
 Z^{(m)}_{\rm RF}=\min\left\{t \ge 1:t=1+\sum_{i=1}^m 1_{\{L_i' \le (\zinf \mu_I/m)t\}}\right\}.
 \end{equation}

\begin{proof}[Proof of Theorem~\ref{fixedmasymmain}]

In the epidemic process $\left\{ \left( Z^{(n)}(t), A^{(n)}(t) \right) : t \ge 0 \right\}$ defined at the start of Section~\ref{embedding}, for $t \ge 0$, let $$X^{(n)}(t)=1_{\{Z^{(n)}(t) \ge \log n\}} \quad \mbox{and} \quad \bY^{(n)}(t)=\sqrt{n}\left(\begin{array}{c}
\barzn_T -\zinf  X^{(n)}(t)\\
\baran_T -\zinf \mu_I  X^{(n)}(t)\\
\end{array}
\right).$$
Then arguing as in the proof of Lemma~\ref{ZAnweak} and also using Theorem~\ref{singleSIR}(b)(iii) shows that
\begin{equation}
\label{ZAXYweak}
\left(\zbarnc,\abarnc,X^{(n)}(\cdot),\bY^{(n)}(\cdot)\right) \Rightarrow \Psi(\cdot)\left(\zinf,\mu_I \zinf,1,\bY\right)
\end{equation}
as $n \to \infty$, where $\bY \sim {\rm N}(\bzero,\Sigma_{\bY})$ and is independent of the random variable $L$ used to define $\Psi(\cdot)$.  Let $$\left(\zbarnci,\abarnci,X^{(n)}_i(\cdot),\bY^{(n)}_i(\cdot)\right) \qquad (i=1,2,\cdots,m)$$ be i.i.d.\ copies of $\left(\zbarnc,\abarnc,X^{(n)}(\cdot),\bY^{(n)}(\cdot)\right)$.  Then, as $n \to \infty$, it follows immediately from~\eqref{ZAXYweak} that for every $i=1,2,\cdots,m$,
\begin{equation}
\label{ZAXYweak1}
\left(\zbarnci,\abarnci,X^{(n)}_i(\cdot),\bY^{(n)}_i(\cdot)\right) \Rightarrow \Psi_i(\cdot)\left(\zinf,\mu_I \zinf,1,\bY_i\right),
\end{equation}
where $\bY_i \sim {\rm N}(\bzero,\Sigma_{\bY})$ and $\Psi_i(t)=1_{\{t \ge L_i\}}$ with $L_i\sim {\rm Exp}(\lg(1-\pi_W))$.  Furthermore, $L_1,L_2,\cdots,L_m$ and $\bY_1,\bY_2,\cdots,\bY_m$ are all mutually independent.  Let $Z^{(n)}_0$ and $T^{(n)}_0$ denote respectively the size and severity of the initial within-community epidemic in community $0$ in $\Enm$,  and let
\begin{equation*}
\bar{Z}^{(n)}_0=n^{-1}Z^{(n)}_0, \quad \bar{T}^{(n)}_0=n^{-1}T^{(n)}_0 \quad \mbox{and} \quad \bY^{(n)}_0=\sqrt{n}\left(\begin{array}{c}
\barzn_0 -\zinf \\
\bartn_0 -\zinf \mu_I \\
\end{array}
\right).
\end{equation*}
 Then, since that epidemic infects at least $\log n$ individuals, Theorem~\ref{singleSIR} and Lemma~\ref{initialsev} imply that $\tildetn_0=\frac{1}{mn}T^{(n)}_0 \convp \frac{\zinf \mu_I}{m}$, $\bar{Z}^{(n)}_0 \convp \zinf$ and
$\bY^{(n)}_0 \convD {\rm N}(\bzero, \Sigma_{\bY})$ as $n \to \infty$.
Further, $\left(Z^{(n)}_0,T^{(n)}_0\right)$ is independent of $\left(\zbarnci,\abarnci,X^{(n)}_i(\cdot),\bY^{(n)}_i(\cdot)\right)$ $(i=1,2,\cdots,m)$.  Now let
\begin{align*}
\xbn(t) & = \sum_{i=1}^m X_i^{(n)}(t), & \bybn & = \sum_{i=1}^m \bY_i^{(n)}(t) ,\\
\barzbn(t) & = n^{-1}\sum_{i=1}^m Z^{(n)}_i(t) \quad \mbox{and} & \barabn(t) & = n^{-1}\sum_{i=1}^m A^{(n)}_i(t).
\end{align*}
 Then using \cite[Theorem~3.2]{Bill68} and the continuous mapping theorem \cite[e.g.][Theorem~5.1]{Bill68},
\begin{align}
\label{Multiconv}
  \left(\bar{Z}^{(n)}_0 ,\tildetn_0,\right. & \left. Y^{(n)}_0, \barzbn(\cdot),\barzan(\cdot),\xbn(\cdot),\bybn(\cdot)\right) \nonumber \\
  & \convD \left(\zinf, m^{-1}\zinf \mu_I,Y_0,\zinf \Psib(\cdot), \mu_I \zinf \Psib(\cdot), \Psib(\cdot),\Psi_{\bY}(\cdot)\right)
\end{align}
as $n \to \infty$, where $\Psib(t)=\sum_{i=1}^m \Psi_i(t)$ and $\Psi_{\bY}(t)=\sum_{i=1}^m \bY_i \Psi_i(t)$.

The probability that the Poisson process $\eta$ has a point at any integer multiple of $\zinf \mu_I$ is zero, so by the continuous mapping theorem it follows from~\eqref{Tinfinity} and~\eqref{Multiconv} that $\tildetinfn \convD \tilde{T}$ as $n \to \infty$, where
\begin{equation}
\label{sevlim}
\tilde{T}=\min\{t\ge 0:t=m^{-1}\zinf \mu_I(1+\Psib(t))\}.
\end{equation}
A further application of the continuous mapping theorem yields that
\begin{equation}
\label{sizelim}
Z^{(n)}_C= 1+\xbn(\tildetinfn) \convD Z_C= 1+\Psib(\tilde{T}) \qquad \mbox{as } n \to \infty.
\end{equation}
Equations~\eqref{sevlim} and~\eqref{sizelim} imply that $\tilde{T}=m^{-1}\zinf \mu_I Z_C$, so
$$Z_C=\min\{t\ge 0:t=1+\Psib(m^{-1}\zinf \mu_I t)\},$$
which on comparison with~\eqref{RFsize} and noting that $\Psib(t)=\sum_{i=1}^n 1_{\{L_i \le t\}}$, shows that $Z_C \eqdist Z^{(m)}_{\rm RF}$, hence proving part (a) of the theorem.

A similar argument shows that
\begin{equation*}
\left(\begin{array}{c}
\barzn_T \\
\baran_T \\
\end{array}
\right)
=\left(\begin{array}{c}
\barzn_0 \\
\bartn_0 \\
\end{array}
\right)
+\left(\begin{array}{c}
\barzbn(\tildetinfn) \\
\barabn(\tildetinfn) \\
\end{array}
\right)
\convD \zinf(1+\Psib(\tilde{T}))\left(\begin{array}{c}
1 \\
\mu_I \\
\end{array}
\right)
\qquad \mbox{as } n \to \infty,
\end{equation*}
and \eqref{fixedmLLNmain} follows, since $1+\Psib(\tilde{T})= Z_C$ from~\eqref{sizelim}.

Finally,
\begin{align*}
\sqrt{n}\left(\begin{array}{c}
\barzn_T-Z^{(n)}_C \zinf \\
\baran_T -Z^{(n)}_C \zinf \mu_I\\
\end{array}
\right)
&= \bY^{(n)}_0+\bybn(\tildetinfn)\\
&\convD \bY_0+\Psi_{\bY}(\tilde{T}) \qquad \mbox{as }n \to \infty\\
&\eqdist \sum_{i=0}^{Z_C-1} \bY_i
\end{align*}
and \eqref{fixedmCLTmain} follows.
\end{proof}

\subsection{Many communities}
\label{manycom}

We now consider the case when the number of communities $m_n$ diverges as $n \to \infty$.

\subsubsection{Early stages of the epidemic; proof of Theorem~\ref{manymearlymain}}
\label{early}
%Under the conditions of Theorem~\ref{manymearlymain}, during the early stages of the  epidemic $\Enm$, described in Section~\ref{model}, the process of infected communities can be approximated by a branching process which assumes that all global contacts are with individuals in previously uninfected communities.
Recall the  branching process $\BBcheck \sim \BB(\lg \bar{A})$, defined in Section~\ref{sec:divergingm}, which approximates the process of infected communities in the early stages of the epidemic $\Enm$.
Let $(Z_{n1},A_{n1}), (Z_{n2},A_{n2}),\cdots$ be i.i.d.\ copies of the bivariate random variable $(Z_n,A_n)$ defined in Section~\ref{sec:singleComm}.  Then $A_{n1}, A_{n2},\cdots$ can be used to construct a realisation of the branching process $\BBncheck= \BB(m_n n \bgn A_n)$ in the obvious fashion. Note that, as $n \to \infty$, $\BBncheck$ converges in distribution to $\BBcheck$, since $m_n n^2 \bgn \to \lg$.  Let $\checkzn_C$ denote the total size (i.e.\ total progeny including the ancestor) of $\BBncheck$. Recall also from Section~\ref{sec:divergingm} the notation associated with $\BBcheck$ and Theorem~\ref{manymearlymain}.

\begin{proof}[Proof of Theorem \ref{manymearlymain}]  To prove the theorem, we generalise the method of \cite{Ball95} to couple the epidemic $\Enm$ and the branching process $\BBncheck$.  Let $\chin_0=0$ and $\chin_1, \chin_2,\cdots$ be i.i.d.\ random variables that are uniformly distributed on the integers $0,1,\cdots,m_n$. Then, for $k=1,2,\cdots$, the individual contacted by the global contact corresponding to the the $k$-th birth in the branching process $\BBncheck$ resides in community $\chin_k$.  Let
$\Mn=\min\{k\ge 1:\chin_k \in \{\chin_0,\chin_1,\cdots,\chin_{k-1}\}\}$. Then the epidemic $\Enm$ and the branching process $\BBncheck$ coincide up until the time of the $\Mn$-th birth in the branching process, at which point they diverge if the individual contacted in community $\chin_{\Mn}$ is not susceptible.  For $k=1,2,\cdots$,
\begin{equation}
\label{birthday0}
\P\left(\Mn \le k\right)=1-\prod_{i=1}^{k-1} \left(1-\frac{i}{m_n+1}\right) \le \sum_{i=1}^{k-1} \frac{i}{m_n+1} = \frac{k(k-1)}{2(m_n+1)},
\end{equation}
so $\P\left(\Mn \le k\right) \to 0$ as $n \to \infty$.  Also, Theorem~\ref{singleSIR}(b) implies that $\bar{A}_n \convD \bar{A}$ as $n \to \infty$, so, recalling that $m_n n^2 \bgn \to \lg$ as $n \to \infty$, it follows that $m_n n \bgn A_n \convD \lg \bar{A}$, so the offspring distribution of
$\BBncheck$ converges to that of $\BBcheck$ as $n \to \infty$.

Let $\check{\pi}_n$ denote the extinction probability of $\BBncheck$.
Then, using \citet[Lemma 4.1]{Brit07}, $\check{\pi}_n \to \check{\pi}_G$ as $n \to \infty$.
Note that~\eqref{birthday0} implies that $\P\left(\Mn \le \log m_n\right) \to 0$
as $n \to \infty$.  Thus,
\begin{align}
\limsup_{n \to \infty} \P\left(\hatznc < \log m_n \right)&=\limsup_{n \to \infty} \P\left(\checkzn_C < \log m_n  \right) \nonumber\\
& \le \limsup_{n \to \infty} \P\left(\checkzn_C < \infty  \right) \nonumber\\
& = \check{\pi}_G \label{limsupng}
\end{align}
and, for $k=1,2,\cdots$,
\begin{align*}
\liminf_{n \to \infty} \P\left(\hatznc < \log m_n \right)&=\liminf_{n \to \infty} \P\left(\checkzn_C < \log m_n  \right) \\
&\ge\liminf_{n \to \infty} \P\left(\checkzn_C \le k  \right) \\
&=\P\left(\check{Z}_C \le k \right).
\end{align*}
Thus $\liminf_{n \to \infty} \P\left(\hatznc < \log m_n \right) \ge \check{\pi}_G$, since $\lim_{k \to \infty} P\left(\check{Z}_C \le k \right)= \check{\pi}_G$, whence, using~\eqref{limsupng}, $\lim_{n\to \infty}\P\left(\hatznc < \log m_n \right) = \check{\pi}_G$, and part (a) follows.

Turning to part (b), for any $k \in \mathbb{N}$,
\begin{align*}
\lim_{n \to \infty} \P\left(\hatznc \le k,\, \hatznc < \log m_n \right)&=
\lim_{n \to \infty} \P\left(\hatznc \le k \right)\\
&=\lim_{n \to \infty} \P\left(\checkzn_C \le k  \right)\\
&=\P\left(\check{Z}_C \le k\right)\\
&=\P\left(\check{Z}_{C} \le k, \, \check{Z}_{C}< \infty \right),
\end{align*}
where the second equality uses $\lim_{n \to \infty} \P\left(\Mn \le k\right)=0$ and the third equality follows as the offspring distribution of $\BBncheck$ converges to that of $\BBcheck$.  The first limit in~\eqref{BTlimitmain} now follows immediately, using part (a).  The second limit is proved similarly on noting that, using
Theorem~\ref{singleSIR}(b), $m_n n \bgn A_n 1_{\{Z_n \ge \log n\}} \convD \lg \mu_I \zinf V$ as $n \to \infty$, where $\P(V=0)=\pi_W=1-\P(V=1)$.  Part (c) of the theorem is also proved similarly, since Theorem~\ref{singleSIR}(b) implies that, as $n \to \infty$,
\begin{equation*}
\left(\bar{Z}_{ni},\bar{A}_{ni}, \sqrt{n}\left(\begin{array}{c}
\bar{Z}_{ni}- \zinf \\
\bar{A}_{ni}-\zinf \mu_I\\
\end{array}\right)\right) \convD \left(\zinf, \zinf \mu_I,\bY_i\right)V_i \quad (i=1,2,\cdots),
\end{equation*}
where $(V_1, \bY_1),(V_2,\bY_2),\cdots$ are independent copies of $(V,\bY)$, with $V$ and $\bY$ being independent, $V$ being distributed as above and $\bY \sim {\rm N}(\bzero,\Sigma_{\bY})$.
\end{proof}

In the next two subsections, we derive a law of large numbers and a central limit theorem for the final outcome of a global epidemic.  In Section~\ref{finmod}, we prove these results for the modified model $\Enm_{\rm mod}$ defined in Section~\ref{embedding}
and then, in Section~\ref{finaloutcome}, we show that corresponding results for the epidemic $\Enm$ follow.

\subsubsection{Final outcome of modified epidemic}
\label{finmod}
We return to the embedding construction, given in Section~\ref{embedding}, for the final outcome of a multi-community epidemic.  However, the amount of infection
$\Tn_0$ to which the population is initially exposed need not be the severity of the within-community epidemic in community $0$.  Indeed, in Section~\ref{finaloutcome}, $\Tn_0$ is the severity of the within-community epidemics in the first $\log m_n$ communities infected by a global epidemic.
Recall that $\left( \left(Z^{(n)}_i(\cdot), A_i^{(n)}(\cdot)\right), \, i=1,2,\cdots,m_n \right)$
are i.i.d.\ copies of $\left(Z^{(n)}(\cdot),A^{(n)}(\cdot)\right)$.

For $t \ge 0$, let
\begin{align}
\label{xbullet}
\barxbn(t)&=\frac{1}{m_n}\sum_{i=1}^{m_n} X^{(n)}_i(t)=\frac{1}{m_n}\sum_{i=1}^{m_n} 1_{\{ Z^{(n)}_i(t)\ge \log n\}},\\
\tildezbn(t)&=\frac{1}{nm_n}\sum_{i=1}^{m_n} Z^{(n)}_i(t)=\frac{1}{m_n}\sum_{i=1}^{m_n}\barzn_i(t), \nonumber\\
\tildeabn(t)&=\frac{1}{nm_n}\sum_{i=1}^{m_n} A^{(n)}_i(t)=\frac{1}{m_n}\sum_{i=1}^{m_n}\baran_i(t), \nonumber
\end{align}
and note that the functions defined at~\eqref{xzandefs} can be written as $x^{(n)}(t)=\E\left[\Xn(t)\right]$, $z^{(n)}(t)=\E\left[\barzn(t)\right]$ and $a^{(n)}(t)=\E\left[\baran(t)\right]$.
Recall the definitions of $x(\cdot)$, $z(\cdot)$ and $a(\cdot)$ in~\eqref{xzadefs}.

\begin{lemma}
\label{GC}
\begin{enumerate}
\item For all $t\in[0,\infty)$, $x^{(n)}(t) \to x(t)$, $z^{(n)}(t) \to z(t)$ and $a^{(n)}(t) \to a(t)$ as $n\to\infty$.

\item For all $t_0\in(0,\infty)$,
\begin{equation}
\label{XGC}
\sup_{0 \le t < t_0} \left|\barxbn(t)-x(t)\right| \convp 0,
\end{equation}
\begin{equation}
\label{ZGC}
\sup_{0 \le t < t_0} \left|\tildezbn(t)-z(t)\right| \convp 0
\end{equation}
and
\begin{equation}
\label{AGC}
\sup_{0 \le t < t_0} \left|\tildeabn(t)-a(t)\right| \convp 0
\end{equation}
as $n\to\infty$.
\end{enumerate}
\end{lemma}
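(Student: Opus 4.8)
The plan is to get part (a) from weak‑convergence results already in the excerpt via bounded/dominated convergence of expectations, and then part (b) by a law of large numbers over the $m_n$ i.i.d.\ communities, pointwise in $t$, upgraded to convergence uniform on compact $t$‑intervals using monotonicity in $t$. For (a), recall from~\eqref{xzandefs} that $\xn(t)=\E[\Xn(t)]$, $\zn(t)=\E[\barzn(t)]$ and $\an(t)=\E[\baran(t)]$, with $\Xn(t)=1_{\{\Zn(t)\ge\log n\}}$ and $\barzn(t)=n^{-1}\Zn(t)$. Arguing exactly as in the proof of Lemma~\ref{ZAnweak}, but additionally tracking the indicator process $\Xn(\cdot)$ (as for the analogous~\eqref{ZAXYweak} in the proof of Theorem~\ref{fixedmasymmain}; this step needs only $\bgn n^2 m_n\to\lg$, not the extra condition required there for the $Y^{(n)}$‑component), one obtains
\begin{equation*}
\left(\zbarnc,\abarnc,\Xn(\cdot)\right)\Rightarrow\Psi(\cdot)\left(\zinf,\mu_I\zinf,1\right)\qquad(n\to\infty),
\end{equation*}
with $\Psi(t)=1_{\{L\le t\}}$ and $L\sim{\rm Exp}(\lg(1-\pi_W))$. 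For fixed $t\ge0$, $\P(L=t)=0$, so the evaluation map $f\mapsto f(t)$ is continuous at the limit path almost surely, and hence $(\barzn(t),\Xn(t))\convD(\zinf 1_{\{L\le t\}},1_{\{L\le t\}})$. As both coordinates lie in $[0,1]$, bounded convergence gives $\xn(t)\to\P(L\le t)=x(t)$ and $\zn(t)\to\zinf\P(L\le t)=z(t)$, in agreement with~\eqref{xzadefs}. (Heuristically: the number of initial infectives of $\En(t)$ is $\Bin(n,1-\pi_n(t))\convD\Po(\lg t)$, and asymptotically distinct initial infectives independently ignite a major within‑community outbreak, each with probability $1-\pi_W$, so the probability of at least one major outbreak tends to $1-\E[\pi_W^{\Po(\lg t)}]=x(t)$.) Finally, $\an(t)=\mu_I\zn(t)$ by Wald's identity for epidemics \cite[Corollary~2.2]{Ball86}, so $\an(t)\to\mu_I z(t)=a(t)$, completing (a).

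\textbf{Part (b): pointwise convergence.} Fix $t\ge0$. Then $\barxbn(t),\tildezbn(t),\tildeabn(t)$ are averages of $m_n$ i.i.d.\ copies of $\Xn(t),\barzn(t),\baran(t)$ respectively. Since $\Xn(t),\barzn(t)\in[0,1]$ their variances are at most $\tfrac14$; and since $\An(t)\le\sum_{j=1}^n I_j$ (each of the $n$ individuals carries an infectious period, contributing to the severity only if infected), $\var(\baran(t))\le\E[(n^{-1}\sum_{j=1}^n I_j)^2]=\sigma_I^2/n+\mu_I^2\le\sigma_I^2+\mu_I^2$, a bound uniform in $n$ and $t$. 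As $m_n\to\infty$, Chebyshev's inequality gives $\barxbn(t)-\xn(t)\convp0$, $\tildezbn(t)-\zn(t)\convp0$ and $\tildeabn(t)-\an(t)\convp0$, so by part (a) we have convergence in probability to $x(t),z(t),a(t)$ at every fixed $t$.

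\textbf{Part (b): local uniformity.} In the embedding of Section~\ref{embedding}, increasing $t$ only adds further triggered within‑community epidemics, so $t\mapsto\Zn(t)$, $t\mapsto\An(t)$ and hence $t\mapsto\Xn(t)$ are nondecreasing; consequently $\barxbn(\cdot),\tildezbn(\cdot),\tildeabn(\cdot)$ are nondecreasing, while $x,z,a$ are continuous and nondecreasing on $[0,t_0]$. A standard P\'olya/Dini‑type argument then finishes: given $\epsilon>0$ and $t_0\in(0,\infty)$, choose $0=s_0<s_1<\cdots<s_M=t_0$ so that each of $x,z,a$ increases by less than $\epsilon$ over every $[s_j,s_{j+1}]$ (uniform continuity). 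On the event that $\barxbn,\tildezbn,\tildeabn$ are each within $\epsilon$ of $x,z,a$, respectively, at all of $s_0,\dots,s_M$ — a finite intersection, hence of probability tending to $1$ by the previous step — monotone sandwiching gives, for $t\in[s_j,s_{j+1}]$, $x(t)-2\epsilon\le x(s_j)-\epsilon<\barxbn(s_j)\le\barxbn(t)\le\barxbn(s_{j+1})<x(s_{j+1})+\epsilon\le x(t)+2\epsilon$, so $\sup_{0\le t<t_0}|\barxbn(t)-x(t)|\le2\epsilon$ there; letting $n\to\infty$ and then $\epsilon\downarrow0$ proves~\eqref{XGC}, and the identical argument with $(\tildezbn,z)$ and $(\tildeabn,a)$ proves~\eqref{ZGC} and~\eqref{AGC}.

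\textbf{Main obstacle.} The real content is part (a): pinning down the limiting mean functions relies on the behaviour of the single‑population SIR epidemic with an asymptotically $\Po(\lg t)$ number of (asymptotically independent) initial infectives, which is exactly what the proof of Lemma~\ref{ZAnweak} encapsulates. Once (a) is in hand, part (b) is routine; the only points needing a little care are the $(n,t)$‑uniform variance bound for the unbounded severity and the monotonicity‑based passage from pointwise to locally uniform convergence, both classical.
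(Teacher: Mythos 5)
Your proposal is correct and follows essentially the same route as the paper's proof: part (a) via Lemma~\ref{ZAnweak}-type weak convergence together with bounded/dominated convergence (the paper obtains $\an(t)\to a(t)$ directly from $\baran(t)\convD\mu_I\zinf\Psi(t)$ and dominated convergence, whereas you route through Wald's identity $\an=\mu_I\zn$, a purely cosmetic difference since the paper records that identity elsewhere); and part (b) via the same $(n,t)$-uniform variance bound and a Chebyshev/WLLN pointwise step, upgraded to local uniformity using monotonicity in $t$ and the second Dini theorem, exactly as the paper does (citing the Ball05 Lemma~1 argument).
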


\begin{proof}
We prove $a^{(n)}(t) \to a(t)$ and~\eqref{AGC}; the assertions relating to $x(\cdot)$ and $z(\cdot)$ are proved similarly.  Fix $t \in [0,\infty)$ and note that $\baran(t)$ is bounded above by $\baran(\infty) = n^{-1}\sum_{j=1}^n I_j$, so
\begin{equation*}
\var\left( \baran(t) \right) \le \E\left[\baran(t)^2\right] \le n^{-2}\E\left[\left(\sum_{j=1}^n I_j\right)^2\right]=\mu_I^2+n^{-1}\sigma_I^2.
\end{equation*}
Hence
\begin{equation*}
\var \left( \tildeabn(t) \right) \le m_n^{-1} (\mu_I^2 + n^{-1} \sigma^2_I) \to 0
\end{equation*}
as $n\to\infty$, so $\tildeabn(t)-a^{(n)}(t) \convp 0$ as $n \to \infty$ by the weak law of large numbers for triangular arrays \citep[e.g.][Theorem~2.2.4]{Durrett2010}.
The probability that the Poisson process $\eta$ has a point at $t$ is zero, so Lemma~\ref{ZAnweak} and the continuous mapping theorem imply that $\baran(t) \convD \mu_I \zinf \Psi(t)$ as $ n \to \infty$.  Further, $n^{-1}\sum_{j=1}^n I_j$ has finite mean $\mu_I$, so the dominated convergence theorem implies that $a^{(n)}(t) \to a(t)=\mu_I \zinf \E\left[\Psi(t)\right]$ as $n \to \infty$. Thus, for any $t \ge 0$, $\tildeabn(t) \convp a(t)$ as $n \to \infty$.  Using a similar argument to the proof of \citet[Lemma 1]{Ball05}, \eqref{AGC} follows since $a(t)$ and $\tildeabn(t)$ are both nondecreasing in $t$; cf.\ the second Dini theorem \cite[e.g.][item 127 on pp.\ 81, 270]{PolSze1978}, which states that if a sequence of monotone (continuous or discontinuous) functions converges pointwise on a closed interval to a continuous function then it converges uniformly.
\end{proof}

Recall from Section~\ref{embedding} that the total size and severity of the modified epidemic $\Enm_{\rm mod}$ are given by $\zbn(\tildetinfn)$ and $\abn(\tildetinfn)$, respectively, where from~\eqref{Tinfinity}, $\tildetinfn$ is given by the smallest solution of $t=\tildetn_0+\tildeabn(t)$.
Suppose that $\tildetn_0 \convp 0$ as $n \to \infty$. Then it follows using {\eqref{AGC}} that with high probability, i.e.\ with probability tending to 1 as $n\to\infty$, $\tildetinfn$ is close to a solution of $a(t)=t$.
 Note that $t=0$ is a solution of $a(t)=t$, the function $a(t) = \mu_I z_{\infty} (1-\e^{-\lambda_G(1-\pi_W)t})$ is concave, $\lim_{t \to \infty}a(t)=\mu_I \zinf$ and $a'(0)=R_*$.  Thus, if $R_* \le 1$, then $t=0$ is the only solution of $a(t)=t$ in $[0, \infty)$, and if $R_* > 1$, then there is a unique solution, $\tau$ say, in $(0,\infty)$.
It is easily verified that $a'(\tau)<1$, so $(\tau, a(\tau))$ is a proper crossing point of the function $a(t)$ and the straight line of gradient one through the origin.

For the remainder of this subsection, we assume that $R_* > 1$.  Then, cf.~the proof of Corollary~\ref{finalWL} below, it follows easily from~\eqref{AGC} that $$\min\left(\tildetinfn,\left|\tildetinfn-\tau\right|\right) \convp 0 \qquad \mbox{as $n \to \infty$.}$$  Thus $\tildetinfn$ is close to either $0$ or $\tau$ with high probability.  In Section~\ref{finaloutcome}, we show for the epidemic $\Enm$ that with probability close to 1, for large $n$, if a global epidemic occurs then $\tildetinfn$ is close to $\tau$, otherwise $\tildetinfn$ is close to $0$.  We wish to study the asymptotic distribution of the final outcome of a global epidemic, so
for $\epsilon \in (0, \tau)$, let $\tildetinfnep$ denote the smallest solution in
$[\epsilon, \infty)$ of $t=\tildetn_0+\tildeabn(t)$, if one exists, otherwise let $\tildetinfnep=\tildetinfn$.

\begin{corollary}
\label{finalWL}
%\annote{}{Reworded to include $\tildetinfnep \convp \tau$ in the corollary as per FB suggestion. Reworded proof too. }
Suppose that $R_*>1$ and $\tildetn_0 \convp 0$ as $n \to \infty$.  Then, for any $\epsilon \in (0, \tau)$, as $n \to \infty$ we have
\begin{equation*}
\tildetinfnep \convp \tau , \quad
\barxbn(\tildetinfnep) \convp x(\tau), \quad
\tildezbn(\tildetinfnep) \convp z(\tau) \quad \mbox{ and } \quad
\tildeabn(\tildetinfnep)
\convp \tau.
\end{equation*}

\end{corollary}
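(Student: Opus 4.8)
The plan is to read off the asymptotic behaviour of $\tildetinfnep$ from the geometry of the limit function $a(\cdot)$. Recall from the discussion preceding the corollary that, since $a$ is concave with $a(0)=0$, $a'(0)=R_*>1$ and $a(t)\uparrow\mu_I\zinf$, and since $R_*>1$, the function $g(t):=a(t)-t$ is concave with exactly two zeros, at $0$ and at $\tau$; hence $g(t)>0$ on $(0,\tau)$ and $g(t)<0$ on $(\tau,\infty)$. The three inputs I will combine are this sign pattern, the uniform weak law of large numbers $\sup_{0\le t<t_0}|\tildeabn(t)-a(t)|\convp0$ from Lemma~\ref{GC}(b), and the hypothesis $\tildetn_0\convp0$. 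Fixing $\epsilon\in(0,\tau)$, it suffices to show that for every (small) $\delta\in(0,\min(1,\tau-\epsilon))$ we have $\P(|\tildetinfnep-\tau|>\delta)\to0$; the statements for $\barxbn(\tildetinfnep)$, $\tildezbn(\tildetinfnep)$ and $\tildeabn(\tildetinfnep)$ will then follow easily.

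First I would set up a good event. Take $t_0=\tau+1$, write $\psi_n(t)=\tildetn_0+\tildeabn(t)-t$, and note that $c_1:=\min_{\epsilon\le t\le\tau-\delta}g(t)>0$ (a compact subinterval of $(0,\tau)$) and $c_2:=(\tau+\delta)-a(\tau+\delta)>0$. With $\eta=\tfrac13\min(c_1,c_2)$, let $E_n=\{\sup_{0\le t<t_0}|\tildeabn(t)-a(t)|<\eta\}\cap\{\tildetn_0<\eta\}$, so $\P(E_n)\to1$ by Lemma~\ref{GC}(b) and the hypothesis. On $E_n$ one has $|\psi_n(t)-g(t)|<2\eta$ for all $t\in[0,t_0)$, so $\psi_n(t)\ge c_1-2\eta>0$ on $[\epsilon,\tau-\delta]$ and $\psi_n(\tau+\delta)\le-c_2+2\eta<0$.

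Next I would pin down $\tildetinfnep$ on $E_n$. The key structural observation is that $t\mapsto\tildetn_0+\tildeabn(t)$ is nondecreasing, right-continuous and (for fixed $n$) piecewise linear with only finitely many, upward, jumps on $[0,t_0)$; hence $\psi_n$ is right-continuous with only upward jumps. A right-continuous function with only upward jumps that is positive at $\epsilon$ and negative at $\tau+\delta$ must vanish somewhere in $(\epsilon,\tau+\delta]$: concretely $t^\ast:=\inf\{t\ge\epsilon:\psi_n(t)<0\}\in(\epsilon,\tau+\delta]$, and combining $\psi_n\ge0$ on $[\epsilon,t^\ast)$ with right-continuity and the nonnegativity of the jump at $t^\ast$ forces $\psi_n(t^\ast)=0$. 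Thus on $E_n$ a solution of $t=\tildetn_0+\tildeabn(t)$ in $[\epsilon,\infty)$ exists, so $\tildetinfnep$ is genuinely the smallest such solution (the fall-back clause in its definition is not triggered); this smallest solution is $\le t^\ast\le\tau+\delta$, and it is $>\tau-\delta$ because $\psi_n>0$ on $[\epsilon,\tau-\delta]$ precludes any solution there. Hence $|\tildetinfnep-\tau|\le\delta$ on $E_n$, which gives $\tildetinfnep\convp\tau$. For the rest, on the event $\{\tildetinfnep<t_0\}$ (probability $\to1$) write $|\tildeabn(\tildetinfnep)-a(\tau)|\le\sup_{0\le t<t_0}|\tildeabn(t)-a(t)|+|a(\tildetinfnep)-a(\tau)|$; the first term $\convp0$ by Lemma~\ref{GC}(b) and the second $\convp0$ by continuity of $a$ and $\tildetinfnep\convp\tau$, so $\tildeabn(\tildetinfnep)\convp a(\tau)=\tau$, and the identical argument using \eqref{XGC} and \eqref{ZGC} yields $\barxbn(\tildetinfnep)\convp x(\tau)$ and $\tildezbn(\tildetinfnep)\convp z(\tau)$.

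I expect the one genuinely delicate point to be the one in the third paragraph: because $\tildeabn$ is a step function rather than continuous, one must verify that the fixed-point equation really does have a solution in $[\epsilon,\infty)$ on the good event (the trajectory cannot "jump across" the diagonal) and that the smallest such solution is trapped in $(\tau-\delta,\tau+\delta]$. This is exactly where monotonicity of $t\mapsto\tildetn_0+\tildeabn(t)$ and the fact that all its jumps are upward are used; everything else is routine.
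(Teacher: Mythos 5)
Your proof is correct and follows essentially the same route as the paper's terse one-paragraph argument: use the uniform convergence from Lemma~\ref{GC}(b) together with the geometry of $a(\cdot)$ (concave, $a(0)=0$, $a'(0)=R_*>1$, $a'(\tau)<1$) to trap the smallest fixed point in $[\epsilon,\infty)$ near $\tau$, then deduce the other three limits via uniform convergence and continuity. You have merely supplied the details the paper leaves implicit (existence of a solution on the good event, using that $\psi_n$ is piecewise linear with slope $-1$ and upward jumps); the only cosmetic difference is that for the fourth assertion the paper reads the identity $\tildeabn(\tildetinfnep)=\tildetinfnep-\tildetn_0$ directly, whereas you re-derive it from the uniform bound and continuity of $a$.
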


\begin{proof}
Letting $n \to \infty$ in~\eqref{Tinfinity} and recalling that $a'(\tau)<1$, the first assertion of the corollary follows using~\eqref{AGC}.  Thus, $\tildeabn(\tildetinfnep)=\tildetinfnep-\tildetn_0 \convp \tau$ as $n \to \infty$, proving the fourth assertion.  The second and third assertions follow similarly, using the first assertion, equations~\eqref{XGC} and~\eqref{ZGC}, respectively, and the fact that the functions $x(\cdot)$ and $z(\cdot)$, respectively, are continuous.
%Letting $n \to \infty$ in~\eqref{Tinfinity} and recalling that $a'(\tau)<1$, it follows using~\eqref{AGC} that $\tildetinfnep \convp \tau$ as $n \to \infty$.  Thus, $\tildeabn(\tildetinfnep)=\tildetinfnep-\tildetn_0 \convp \tau$ as $n \to \infty$, proving the third assertion of the corollary.  The first and second assertions follow using~\eqref{XGC} and~\eqref{ZGC}, respectively, since $\tildetinfnep \convp \tau$ as $n \to \infty$ and the functions $x(\cdot)$ and $z(\cdot)$ are continuous.
\end{proof}

We return now to the single-population epidemic process $\left\{\left(Z^{(n)}(t),A^{(n)}(t)\right):t \ge 0\right\}$, defined in Section~\ref{embedding}, and prove two lemmas related to it before continuing with our analysis of the multi-community model.
Recall that $\zn(t)=n^{-1}\E[\Zn(t)]$, where $\Zn(t)$ has the same distribution as the size of the single-population epidemic $\En(t)$ (see Definition~\ref{defEpsnt}) in which the initial number of
susceptibles $S_n$ follows the binomial distribution $\Bin(n, \pi_n(t))$  with $\pi_n(t)= \e^{-\bgn n m_n t}$.  For $k=0,1,\cdots,n$, let $\mu^{(n)}_k=\E[\Zn(t) \mid S_n=n-k]$ and note that $\mu^{(n)}_k$ is independent of $t$.
Taking expectation with respect to $S_n$ yields
\begin{equation}
\label{zn1}
\zn(t)=\frac{1}{n}\sum_{k=0}^n \binom{n}{k} \mu^{(n)}_k (1-\pi_n(t))^k \pi_n(t)^{n-k}.
\end{equation}
Similar expressions exist for $\xn(t)$ and $\an(t)$.  Thus $\xn(\cdot)$, $\zn(\cdot)$ and $\an(\cdot)$ are differentiable. Further, using Wald's identity for epidemics \cite[Corollary 2.2]{Ball86}, it follows immediately from the expressions for $\zn(t)$ and $\an(t)$
that $a^{(n)}(t)=\mu_I z^{(n)}(t)$ for any $n=1,2,\cdots$ and any $t \ge 0$.
Also, it follows from~\citet[Equation~(3.10)]{Ball97}
that $\zn(t)$ admits the representation
\begin{equation}
\label{znGont}
\zn(t)=1-\sum_{i=1}^{n} \frac{(n-1)!}{(n-i)!}\phi_I(i\bwn)^{n-i} \pi_n(t)^i \alpha_i^{(n)},
\end{equation}
where the $\alpha_i^{(n)}$ are independent of $\pi_n(t)$ and strictly positive; see the discussion following~\citet[Equation~(3.7)]{Ball97}.

Recall that $\tau = \sup \{ t\geq0 : t=a(t)\}$ and $\taun=\inf\{t>0:t=\an(t)\}$ (with $\taun=0$ if that set is empty); see Section~\ref{sec:divergingm}.

\begin{lemma}
\label{tauCgce}
We have $\taun \to \tau$ as $n \to \infty$.
\end{lemma}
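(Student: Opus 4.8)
The plan is to obtain $\taun\to\tau$ from two structural facts about $\an$ together with the pointwise convergence already in hand. First I would read off from the representation~\eqref{znGont} that $\an(t)=\mu_I\zn(t)=\mu_I\bigl(1-\sum_{i=1}^{n}b_i^{(n)}\pi_n(t)^i\bigr)$, where the coefficients $b_i^{(n)}=\frac{(n-1)!}{(n-i)!}\phi_I(i\bwn)^{n-i}\alpha_i^{(n)}$ are strictly positive and independent of $t$, while each $\pi_n(t)^i=\e^{-i\bgn n m_n t}$ is nonincreasing and convex in $t$. Hence $\an$ is nondecreasing and concave on $[0,\infty)$, with $\an(0)=\mu_I\zn(0)=0$ (the epidemic $\En(0)$ has no initial infectives) and $\an(t)\le\mu_I$ for all $t\ge0$ (since $\zn(t)=\E[\barzn(t)]\le1$). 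Second, Lemma~\ref{GC}(a) gives $\an(t)\to a(t)$ for every fixed $t$. Finally I would record the geometry of the limit: since $R_*>1$, the concave function $a(t)=\mu_I\zinf(1-\e^{-\lg(1-\pi_W)t})$ has $a(0)=0$, $a'(0)=R_*>1$, and $\tau\in(0,\infty)$ as its unique positive fixed point, so $g(t):=a(t)-t$ satisfies $g>0$ on $(0,\tau)$ and $g<0$ on $(\tau,\infty)$.

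Given these, I would argue by a squeeze. Fix $\epsilon\in(0,\tau)$ and set $t_\pm=\tau\pm\epsilon$. Since $a(t_-)>t_-$ and $a(t_+)<t_+$, the pointwise convergence $\an\to a$ gives $\an(t_-)>t_-$ and $\an(t_+)<t_+$ for all large $n$. Write $g_n(t)=\an(t)-t$, which is concave with $g_n(0)=0$ and $g_n(t)\le\mu_I-t$. From $g_n(t_-)>0$ and concavity, $g_n(s)\ge(s/t_-)\,g_n(t_-)>0$ for every $s\in(0,t_-]$; as $g_n$ is also eventually negative, the continuous function $g_n$ has a zero in $(t_-,\infty)$ but none in $(0,t_-]$, so $\taun>t_-$. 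Conversely, by concavity $g_n\ge0$ throughout $[0,\taun]$ (trivially if $\taun=0$), so $g_n(t_+)<0$ forces $t_+>\taun$. Hence $\taun\in(\tau-\epsilon,\tau+\epsilon)$ for all large $n$, and letting $\epsilon\downarrow0$ yields $\taun\to\tau$.

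I do not anticipate a serious obstacle: the argument is short, and the only points that need care are the concavity bookkeeping for $g_n$ near the origin and the degenerate case in the definition of $\taun$ (where the set $\{t>0:t=\an(t)\}$ could a priori be empty), both handled above. An equivalent alternative would be to first upgrade Lemma~\ref{GC}(a) to uniform convergence of $\an$ to $a$ on compact intervals---valid because $\an$ and $a$ are nondecreasing and $a$ is continuous, by the second Dini-type theorem already invoked in the proof of Lemma~\ref{GC}---and then locate $\taun$ by the intermediate value theorem, the transversality $a'(\tau)<1$ guaranteeing that the fixed point depends continuously on the function.
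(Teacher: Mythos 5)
Your argument is correct and reaches the same conclusion, but your primary route differs from the paper's in a useful way. The paper's proof deduces from~\eqref{znGont} only that $\an=\mu_I\zn$ is increasing, then invokes the second Dini theorem (monotone sequence, pointwise limit, continuous limit function) to upgrade Lemma~\ref{GC}(a) to uniform convergence on compacts, and finally concludes $\taun\to\tau$ from uniqueness of $\tau$ and the transversality $a'(\tau)<1$. You instead extract the additional fact that $\an$ is concave (because each $\pi_n(t)^i$ is convex and the coefficients in~\eqref{znGont} are positive) with $\an(0)=0$, and then run a purely pointwise squeeze: concavity of $g_n(t)=\an(t)-t$ together with $g_n(\tau-\epsilon)>0$ forces $g_n>0$ on $(0,\tau-\epsilon]$ and hence $\taun>\tau-\epsilon$, while $g_n\ge0$ on $[0,\taun]$ together with $g_n(\tau+\epsilon)<0$ forces $\taun<\tau+\epsilon$. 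This buys you a more self-contained argument that only needs convergence at two points and makes the role of the geometry of the fixed point explicit, at the cost of invoking concavity (which the paper does use elsewhere, in the proof of Lemma~\ref{GC1}, so it is freely available). Your closing remark about uniform convergence via the second Dini theorem plus the transversality $a'(\tau)<1$ is precisely the paper's route. One small bookkeeping point worth making explicit in a final write-up: continuity of $\an$ (clear from~\eqref{znGont}) is what guarantees the infimum defining $\taun$ is attained once you know the zero set of $g_n$ on $(0,\infty)$ is nonempty and bounded away from $0$, so that $g_n(\taun)=0$ and the concavity comparison on $[0,\taun]$ is legitimate.
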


\begin{proof}
We prove the lemma when $R_*>1$, which is all that we require for the sequel; the proof when $R_* \le 1$ is similar. By Lemma~\ref{GC}(a) we have $\lim_{n \to \infty} \an(t) =a(t)$ for all $t\ge 0$. Since $\pi_n'(t)<0$, it follows from~\eqref{znGont} that
the function $\an(\cdot) = \mu_I \zn(\cdot)$ is increasing for each $n$, so by the second Dini theorem $\an(\cdot)$ converges uniformly to $a(\cdot)$ on any finite interval as $n \to \infty$.  It follows that
$\taun \to \tau$ as $n \to \infty$, since $\tau$ is the unique solution in $(0,\infty)$ of $a(t)=t$ and $a'(\tau)<1$.
\end{proof}

\begin{lemma}
\label{GC1}
\item For all $t_0\in(0,\infty)$,
\begin{equation}
\label{XGCD}
\lim_{n \to \infty}\sup_{0 \le t < t_0} \left|\xndash(t)-x'(t)\right| =0
\end{equation}
and
\begin{equation}
\label{ZGCD}
\lim_{n \to \infty}\sup_{0 \le t < t_0} \left|\zndash(t)-z'(t)\right| =0.
\end{equation}
\end{lemma}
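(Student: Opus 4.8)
The plan is to differentiate the exact representations of $\xn(\cdot)$ and $\zn(\cdot)$ underlying~\eqref{zn1}--\eqref{znGont} and then pass to the limit, exploiting that both functions depend on $t$ only through $\pi_n(t)=\exp(-\bgn n m_n t)$, that $\bgn n^2 m_n\to\lg$, and that $\bgn n m_n\to0$ (so $\pi_n(t)\to1$ locally uniformly). Conditioning in $\En(t)$ on the number of initial infectives $J_n(t)=n-S_n\sim\Bin(n,1-\pi_n(t))$ — whose only effect on $(\Zn(t),\An(t))$ is through its value — gives
\begin{equation*}
\xn(t)=\E[\varphi_n(J_n(t))],\qquad \zn(t)=\E[\psi_n(J_n(t))],
\end{equation*}
where $\varphi_n(j)=\P(\Zn(t)\ge\log n\mid J_n(t)=j)$ and $\psi_n(j)=n^{-1}\E[\Zn(t)\mid J_n(t)=j]$ are independent of $t$ (thus $\psi_n(j)=\mu^{(n)}_j/n$ in the notation of~\eqref{zn1}), both lie in $[0,1]$, vanish at $j=0$, and are nondecreasing in $j$. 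Writing $q_n(t)=1-\pi_n(t)$, so that $q_n'(t)=\bgn n m_n\,\pi_n(t)$, the identity $\frac{d}{dq}\E[g(\Bin(n,q))]=n\,\E[g(\Bin(n-1,q)+1)-g(\Bin(n-1,q))]$ yields, with $B_n(t)\sim\Bin(n-1,q_n(t))$ and $\Delta g(j)=g(j+1)-g(j)$,
\begin{equation*}
\xndash(t)=\bgn n^2 m_n\,\pi_n(t)\,\E[\Delta\varphi_n(B_n(t))],\qquad
\zndash(t)=\bgn n^2 m_n\,\pi_n(t)\,\E[\Delta\psi_n(B_n(t))],
\end{equation*}
with $\Delta\varphi_n,\Delta\psi_n\in[0,1]$.

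Next I would collect the limiting ingredients, all uniform on $[0,t_0]$. Since $(n-1)\bgn n m_n\to\lg$ and $\bgn n m_n\to0$ we get $\pi_n(t)\to1$ and $(n-1)q_n(t)\to\lg t$, so Le Cam's inequality gives $\sup_{t\in[0,t_0]}d_{\mathrm{TV}}(B_n(t),\Po(\lg t))\to0$. Moreover $\varphi_n(j)\to1-\pi_W^{\,j}$ and $\psi_n(j)\to\zinf(1-\pi_W^{\,j})$ for each fixed $j$: this is a multi-initial-infective strengthening of Theorem~\ref{singleSIR}(b), but it can instead be deduced from Lemma~\ref{GC}(a), since replacing $J_n(t)$ by $\Po(\lg t)$ via the total-variation bound turns $\xn(t)\to x(t)$ into $\sum_j\varphi_n(j)(\lg t)^j/j!\to\sum_j(1-\pi_W^{\,j})(\lg t)^j/j!$ for all $t\ge0$, whereupon (as $|\varphi_n(j)|\le1$) a normal-families argument on the entire functions $s\mapsto\sum_j\varphi_n(j)s^j/j!$ forces coefficientwise convergence (likewise for $\psi_n$, using $z=\zinf x$). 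Consequently $\Delta\varphi_n(j)\to\pi_W^{\,j}(1-\pi_W)$ and $\Delta\psi_n(j)\to\zinf\pi_W^{\,j}(1-\pi_W)$.

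The conclusion is then routine. Splitting the expectation over $\{B_n(t)\le J\}$ and $\{B_n(t)>J\}$, controlling the tail by $\|\Delta\varphi_n\|_\infty\le1$ and $\sup_{t\in[0,t_0]}\P(\Po(\lg t)>J)\to0$ as $J\to\infty$, and the head by the finitely many pointwise limits together with the uniform total-variation bound, yields $\sup_{t\in[0,t_0]}|\E[\Delta\varphi_n(B_n(t))]-(1-\pi_W)\e^{-\lg(1-\pi_W)t}|\to0$, and similarly for $\psi_n$ with an extra factor $\zinf$. Multiplying by $\bgn n^2 m_n\,\pi_n(t)\to\lg$ and recalling from~\eqref{xzadefs} that $x'(t)=\lg(1-\pi_W)\e^{-\lg(1-\pi_W)t}$ and $z'(t)=\zinf x'(t)$ gives~\eqref{XGCD} and~\eqref{ZGCD}, since $[0,t_0)\subset[0,t_0]$.

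The main obstacle is not the algebra but securing all the estimates uniformly over $[0,t_0)$ down to $t=0$, where $\pi_n(t)\to1$ and $B_n(t)\convD\Po(\lg t)$ must be handled simultaneously; this is precisely what the uniform Le Cam bound and the boundedness of $\bgn n^2 m_n$ provide. The pointwise limits $\varphi_n(j)\to1-\pi_W^{\,j}$, $\psi_n(j)\to\zinf(1-\pi_W^{\,j})$ are the one substantive input, treated as above. Finally, for $\zn$ alone the argument shortens: \eqref{znGont} together with $\zn(0)=0$ shows $\zn(t)=1-\E[\exp(-\bgn n m_n N_n t)]$ for a positive integer-valued $N_n\le n$, so $\zn$ and its $C^1$ limit $z$ are strictly concave and $\zndash\to z'$ uniformly on compacta is then automatic; but this shortcut does not obviously extend to $\xn$.
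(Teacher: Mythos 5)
Your proof is correct and reaches the same pointwise derivative formula as the paper (after unwinding the forward-difference identity, your $\bgn n^2 m_n\pi_n(t)\E[\Delta\psi_n(B_n(t))]$ is precisely the paper's display just after \eqref{zn1}, and the input $\mu^{(n)}_k/n\to(1-\pi_W^k)\zinf$ is the same extension of Theorem~\ref{singleSIR}(b)). Where the routes genuinely diverge is in how uniformity over $[0,t_0]$ is obtained: the paper proves only the pointwise limit $\zndash(t)\to z'(t)$ and then upgrades to uniform convergence via the second Dini theorem, using~\eqref{znGont} to show that $\zndash(\cdot)$ is monotone decreasing; you instead establish uniformity directly, by a uniform Le Cam total-variation bound $\sup_{t\le t_0}d_{\mathrm{TV}}(B_n(t),\Po(\lg t))\to0$ together with a truncation of the Poisson tail. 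Your route has the advantage that it treats $\xndash$ and $\zndash$ symmetrically and requires no monotonicity of the derivative, whereas the paper's Dini argument for $\xndash$ (dismissed as ``similar and left to the reader'') actually rests on a monotonicity/concavity property of $\xn$ that has no analogue of~\eqref{znGont} behind it and is not as immediate as for $\zn$. The paper's Dini route is shorter where it applies. You also offer an alternative derivation of the pointwise limits $\varphi_n(j)\to1-\pi_W^j$, $\psi_n(j)\to\zinf(1-\pi_W^j)$ directly from Lemma~\ref{GC}(a) via a normal-families/coefficient-matching argument rather than re-running the branching-process approximation with $j$ ancestors; this is a nice observation, though it buys little here since the $j$-ancestor extension of Theorem~\ref{singleSIR}(b) is standard. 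Finally, your closing remark that the concavity shortcut for $\zn$ (from~\eqref{znGont}) does not obviously extend to $\xn$ is exactly the right caveat, and is in fact the point at which the paper's own sketch is thinnest.
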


\begin{proof}
We prove~\eqref{ZGCD}.
The proof of~\eqref{XGCD} is similar and left to the reader.
To prove~\eqref{ZGCD}, we show first that $\lim_{n \to \infty} \zndash(t)=z'(t)$ for each fixed $t \ge 0$.  We then show that $\zndash(\cdot)$ is decreasing for each $n$. The uniform convergence in~\eqref{ZGCD} then follows by the second Dini theorem.

Differentiating~\eqref{zn1} yields
\begin{align}
\zndash(t)&=\frac{1}{n}\pi_n'(t)\sum_{k=0}^n \binom{n}{k} \mu^{(n)}_k (1-\pi_n(t))^{k-1} \pi_n(t)^{n-k-1}[n(1-\pi_n(t))-k]\nonumber\\
&=-\bgn n^2 m_n\left[\zn(t)-\frac{1}{n}\sum_{i=0}^{n-1} \binom{n-1}{i}\mu^{(n)}_{i+1}
(1-\pi_n(t))^i \pi_n(t)^{n-1-i}\right],\label{zndasht}
\end{align}
since $\pi_n'(t)=-\bgn n m_n \pi_n(t)$.

Recalling that $\pi_n(t)=\e^{-\bgn n m_n t}$ and $\bgn n^2 m_n \to \lambda_G$ as $n \to \infty$,
\begin{equation}
\label{poissonconv}
\lim_{n \to \infty} \binom{n-1}{i}
(1-\pi_n(t))^i \pi_n(t)^{n-1-i} = \frac{(\lambda_G t)^i  \e^{-\lambda_G t}}{i!}\qquad(i=0,1,\cdots).
\end{equation}
Also, since $0 \le n^{-1} Z_n\le 1$, it follows from an obvious extension of Theorem~\ref{singleSIR}(b) to $k>1$ initial infectives that
\begin{equation}
\label{munklim}
\lim_{n \to \infty} n^{-1} \mu^{(n)}_k=(1-\pi_W^k)z_{\infty} \qquad(k=0,1,\cdots).
\end{equation}

Fix $\epsilon>0$.  There exists $n_0 \in \mathbb{N}$ such that $\sum_{i=n_0+1}^{\infty} \frac{(\lambda_G t)^i  \e^{-\lambda_G t}}{i!}< \frac{\epsilon}{3}$.  The limits~\eqref{poissonconv} and~\eqref{munklim} imply that, for all sufficiently large $n$,
\[
\left|\frac{1}{n}\sum_{i=0}^{n_0} \binom{n-1}{i}\mu^{(n)}_{i+1}
(1-\pi_n(t))^i \pi_n(t)^{n-1-i}-\sum_{i=0}^{n_0} \frac{(\lambda_G t)^i  \e^{-\lambda_G t}}{i!}(1-\pi_W^{i+1})z_{\infty}\right| < \frac{\epsilon}{3}
\]
and
\[
\left|\frac{1}{n}\sum_{i=n_0+1}^{n} \binom{n-1}{i}\mu^{(n)}_{i+1}
(1-\pi_n(t))^i \pi_n(t)^{n-1-i}\right|<\frac{\epsilon}{3}.
\]
Further, $\sum_{i=n_0+1} ^{\infty} \frac{(\lambda_G t)^i  \e^{-\lambda_G t}}{i!} (1-\pi_W^{i+1})z_{\infty} < \frac{\epsilon}{3}$, since $(1-\pi_W^{i+1})z_{\infty}\in [0,1]$ for all $i$, so
\[
\left|\frac{1}{n}\sum_{i=0}^{n-1} \binom{n-1}{i}\mu^{(n)}_{i+1}
(1-\pi_n(t))^i \pi_n(t)^{n-1-i}-\sum_{i=0}^{\infty}\frac{(\lambda_G t)^i  \e^{-\lambda_G t}}{i!}(1-\pi_W^{i+1})z_{\infty}\right|<\epsilon\]
for all sufficiently large $n$.  Since $\epsilon>0$ is arbitrary it follows that
\[
\lim_{n \to \infty}\frac{1}{n}\sum_{i=0}^{n-1} \binom{n-1}{i}\mu^{(n)}_{i+1}
(1-\pi_n(t))^i \pi_n(t)^{n-1-i}= \sum_{i=0}^{\infty}\frac{(\lambda_G t)^i  \e^{-\lambda_G t}}{i!}(1-\pi_W^{i+1})z_{\infty}.
\]

Recall that $\bgn n^2 m_n \to \lambda_G$ and $\zn(t)\to z(t)=z_{\infty}(1- \e^{-\lambda_G(1-\pi_W)})$; by assumption and from Lemma~\ref{GC}(a), respectively.  It then follows from~\eqref{zndasht} that
\begin{align*}
\lim_{n \to \infty}\zndash(t)&=-\lambda_G\left[z(t)-z_{\infty}\left(1-\pi_W \e^{-\lambda_G(1-\pi_W)t}\right)\right]\\
&=\lambda_G z_{\infty}(1-\pi_W) \e^{-\lambda_G(1-\pi_W)t}\\
&=z'(t).
\end{align*}

Finally, that $\zndash(\cdot)$ is decreasing follows immediately from~\eqref{znGont} and the fact that $\pi_n''(t)>0$.
\end{proof}

Returning again to the embedding construction, it is convenient to introduce some vector notation.  For $i=1,2,\cdots,m_n$ and $t \ge 0$, let
\begin{equation*}
\bRn_i(t)=\left(R_{i1}^{(n)}(t),R_{i2}^{(n)}(t),R_{i3}^{(n)}(t)\right)^{\top}
=\left(X^{(n)}_i(t),\barzn_i(t),\baran_i(t)\right)^{\top}.
\end{equation*}
For $t \ge 0$, let
$$\bRbn(t)=\left(\rbnone(t),\rbntwo(t),\rbnthree(t) \right)^{\top}=\sum_{i=1}^{m_n}\bRn_i(t),$$ so
\begin{equation*}
\E\left[\bRbn(t)\right]=m_n\left(x^{(n)}(t), z^{(n)}(t), a^{(n)}(t)\right)^{\top}=m_n \brn(t),
\end{equation*}
say, and let $\br(t)=\lim_{n \to \infty}\brn(t)$, so $\br(t)=\left(x(t),z(t),a(t)\right)^{\top}$ (see Lemma~\ref{GC}(a)). Further, for $t,s\ge0$, let $\Sigma(t,s)=\left[\sigma_{ij}(t,s)\right]$, where for $i,j=1,2,3$ we define
\begin{equation*}
\sigma_{ij}(t,s)=\lim_{n \to \infty}m_n^{-1}{\rm cov}\left(\rbni(t),\rbnj(s) \right)=
\lim_{n \to \infty}{\rm cov}\left(R_{1i}^{(n)}(t),R_{1j}^{(n)}(s) \right).
\end{equation*}
Noting that the processes $\{\bRn_i(t) : t\geq0\}$ ($i=1,2,\cdots,m_n$) are independent, a simple calculation using
Lemma~\ref{ZAnweak}, together with the continuous mapping and dominated convergence theorems (cf.~the proof of Lemma~\ref{GC}(a)), gives
\begin{equation}
\label{sigmatt}
\Sigma(t,s)={\rm cov}\left(\Psi(t),\Psi(s)\right)\bb{\bb}^{\top}=x(t)(1-x(s))\bb{\bb}^{\top} \quad(0 \le t \le s),
\end{equation}
where $\bb=(1,\zinf,\mu_I \zinf)^\top$ as defined in equation~\eqref{bbDef}.

In the following theorem,  $\convw$ denotes weak convergence in the space of bounded functions from $[0,T]$ to $\mathbb{R}^3$ endowed with the supremum metric.
We use this toplogy, rather than the weaker Skorohod topology, as in our setting easily checkable conditions for tightness under the stronger topology exist
in the literature.  For $T>0$, let $\bRbnT=\{\bRbn(t):0 \le t \le T\}$, $\brnT=\{\brn(t):0 \le t \le T\}$ and $\bW_T=\{\bW(t): 0 \le t \le T\}$, where
\begin{equation}
 \label{Wdef}
 \{\bW(t): t \ge 0\} =
\{(W_1(t),W_2(t),W_3(t))^{\top}: t \ge 0\}
\end{equation}
is a zero-mean Gaussian process with covariance function $\Sigma(t,s)$ $(t,s \ge 0)$.

\begin{theorem}
\label{weakconvr}
For any $T>0$,
\begin{equation}
\label{weakconvreq}
\frac{1}{\sqrt{m_n}}\left(\bRbnT-m_n \brnT\right) \convw \bW_T \qquad \mbox{as } n \to \infty.
\end{equation}
\end{theorem}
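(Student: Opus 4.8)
The limit $\bW$ has a version with continuous sample paths: it is centred Gaussian, and from~\eqref{sigmatt} one computes, for $0\le s\le t$, $\E\big[(\bW(t)-\bW(s))(\bW(t)-\bW(s))^\top\big]=(x(t)-x(s))\big(1-(x(t)-x(s))\big)\bb\bb^\top$, which is $O(|t-s|)$ since $x(\cdot)$ is Lipschitz on $[0,T]$; Kolmogorov's criterion applied to the (Gaussian) fourth moments then gives continuity. Hence it suffices to establish (i) convergence of the finite-dimensional distributions of $m_n^{-1/2}(\bRbnT-m_n\brnT)$ to those of $\bW_T$, and (ii) tightness of the family $\{m_n^{-1/2}(\bRbnT-m_n\brnT)\}_{n\ge1}$ in $(\ell^\infty[0,T])^3$ with the supremum metric; the continuity of the limit is precisely what permits working in this stronger topology rather than the Skorohod one (a sequence that converges in the Skorohod topology to a continuous limit also converges uniformly).

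For (i), fix $0\le t_1<\cdots<t_k\le T$; then $m_n^{-1/2}(\bRbn(t_1)-m_n\brn(t_1),\dots,\bRbn(t_k)-m_n\brn(t_k))$ is a normalised sum of the $m_n$ i.i.d.\ centred random $3k$-vectors $(\bRn_i(t_1)^\top,\dots,\bRn_i(t_k)^\top)^\top-\E[\,\cdot\,]$ (the processes $\{\bRn_i(\cdot)\}_{i=1}^{m_n}$ being independent). By the Cram\'{e}r--Wold device it is enough to prove a one-dimensional CLT for each linear functional of these vectors. Their variances converge to the corresponding variance of $(\bW(t_1),\dots,\bW(t_k))$ by~\eqref{sigmatt} (the convergence ${\rm cov}(R^{(n)}_{1i}(t),R^{(n)}_{1j}(s))\to\sigma_{ij}(t,s)$ being underpinned by the joint weak convergence in Lemma~\ref{ZAnweak} and~\eqref{ZAXYweak} together with uniform integrability of the relevant products, for which the severity coordinate uses $\E[I^{2+\alpha}]<\infty$). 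A Lyapunov condition with exponent $2+\alpha$ holds: the first two coordinates of $\bRn_i(t)$ lie in $[0,1]$, while the third, $\baran_i(t)$, is bounded by $\baran_i(\infty)=n^{-1}\sum_{j=1}^n I_j$ with $\E[(\baran_i(\infty))^{2+\alpha}]\le\E[I^{2+\alpha}]<\infty$ (Jensen), so the $(2+\alpha)$-moments of the summands are bounded uniformly in $n$, and $m_n\cdot m_n^{-(2+\alpha)/2}\cdot O(1)=m_n^{-\alpha/2}\,O(1)\to0$ because $m_n\to\infty$. The Lyapunov CLT yields~(i).

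For (ii) I would use the decomposition $\bRn_i(t)=\bb\,X^{(n)}_i(t)+\boldsymbol{E}^{(n)}_i(t)$, where $X^{(n)}_i(t)=1_{\{\tau^{(n)}_i\le t\}}$ with $\tau^{(n)}_i=\inf\{t:Z^{(n)}_i(t)\ge\log n\}\in[0,\infty]$ and $\boldsymbol{E}^{(n)}_i(t)=\big(0,\ \barzn_i(t)-\zinf X^{(n)}_i(t),\ \baran_i(t)-\mu_I\zinf X^{(n)}_i(t)\big)^\top$, so that
\begin{equation*}
\frac{\bRbn(t)-m_n\brn(t)}{\sqrt{m_n}}=\bb\cdot\frac{1}{\sqrt{m_n}}\sum_{i=1}^{m_n}\big(1_{\{\tau^{(n)}_i\le t\}}-x^{(n)}(t)\big)+\frac{1}{\sqrt{m_n}}\sum_{i=1}^{m_n}\big(\boldsymbol{E}^{(n)}_i(t)-\E\boldsymbol{E}^{(n)}_1(t)\big).
\end{equation*}
The first term is $\bb$ times the (one-dimensional) empirical process of the i.i.d.\ sample $\tau^{(n)}_1,\dots,\tau^{(n)}_{m_n}$, whose common distribution function is $x^{(n)}$; since $x^{(n)}\to x$ pointwise (Lemma~\ref{GC}(a)) with $x$ continuous, the convergence is uniform on $[0,T]$ (second Dini theorem), and the empirical-process invariance principle for triangular arrays (over the class of indicators of half-lines, a Vapnik--Chervonenkis class) gives weak convergence of this empirical process in $\ell^\infty[0,T]$ to $B^0(x(\cdot))$; hence the first term converges in $(\ell^\infty[0,T])^3$ to $\bb\,B^0(x(\cdot))$, whose covariance is $x(t)(1-x(s))\bb\bb^\top=\Sigma(t,s)$ ($s\le t$), i.e.\ to $\bW_T$. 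It remains to show the second (error) term $\convp0$ uniformly on $[0,T]$. Fixing $t$, Lemma~\ref{ZAnweak} and~\eqref{ZAXYweak} give $\barzn_1(t)-\zinf X^{(n)}_1(t)\convD0$ and $\baran_1(t)-\mu_I\zinf X^{(n)}_1(t)\convD0$, and these are bounded by $1$ and by $\baran_1(\infty)+\mu_I\zinf$ (uniformly integrable) respectively, so their variances — hence the variance of the error term — vanish pointwise; since $\boldsymbol{E}^{(n)}_i$ is a difference of two nondecreasing processes of uniformly bounded second moments, a maximal inequality (or a Billingsley-type product-moment bound with a fixed continuous controlling function) upgrades this to $\sup_{0\le t\le T}|\,\cdot\,|\convp0$. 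Slutsky's theorem then combines the two terms to give~(ii), and (i)--(ii) together yield~\eqref{weakconvreq}.

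The main obstacle is this last uniform-negligibility step for the error term — equivalently, tightness of the centred error process — and in particular its severity coordinate $\baran_i(\cdot)$: unlike the indicator and fraction-infected coordinates, which are uniformly bounded by $1$, it is only controlled by the random bound $n^{-1}\sum_{j=1}^n I_j$, so bounding its increment moments and its maximal jump $m_n^{-1/2}\max_{i\le m_n}\baran_i(\infty)$ (which is $o_p(1)$ exactly because $\E[I^{2+\alpha}]<\infty$) uniformly in $n$ is where the real work lies. The finite-dimensional CLT and the empirical-process part are otherwise routine.
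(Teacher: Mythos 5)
Your finite-dimensional-distribution argument is essentially the same as the paper's: Cram\'er--Wold reduction to scalar sums, with a Lyapunov condition of order $2+\alpha$ using $\E[I^{2+\alpha}]<\infty$ for the severity coordinate and boundedness for the other two, and variance convergence coming from Lemma~\ref{ZAnweak} plus dominated convergence. That part checks out.

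Your tightness argument, however, takes a genuinely different route from the paper's, and it is exactly there that the gap sits. The paper does not decompose at all: it applies the bracketing central limit theorem \cite[Theorem 2.11.9]{Vaar96} directly to the full vector $\bRbni(\cdot)$, verifying an envelope/uniform-integrability condition, an $L^2$ increment condition, and a bracketing-entropy condition. The key estimate there is $|\rbniT(t)-\rbniT(s)|\le 1_{\{\eta^{(n)}_i([s,t))\ge 1\}}\,\baran_i(\infty)$, whose second moment is $O(|t-s|)$ uniformly in $n$, giving a bracketing number $N^{(n)}_{[\,]}(\epsilon,T)=O(\epsilon^{-2})$ and a vanishing bracketing integral. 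All of this needs only $\E[I^{2+\alpha}]<\infty$. Your proposal instead peels off $\bb\,X^{(n)}_i(\cdot)$ and treats it as a classical empirical process of the outbreak-time sample $\tau^{(n)}_1,\ldots,\tau^{(n)}_{m_n}$; this is a clean conceptual device and it does transparently expose the single-factor covariance structure $\Sigma(t,s)=x(t)(1-x(s))\bb\bb^\top$. But you then still have to prove that the residual process $m_n^{-1/2}\sum_i(\boldsymbol{E}^{(n)}_i(\cdot)-\E\boldsymbol{E}^{(n)}_1(\cdot))$ is uniformly negligible, and the one sentence you offer---``a maximal inequality (or a Billingsley-type product-moment bound with a fixed continuous controlling function) upgrades this to a sup''---is not a proof and may not be available under the stated hypotheses. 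A Billingsley-style product-moment bound with exponent $\gamma=2$ would require controlling fourth moments of increments of $\baran_i(\cdot)$, i.e.\ $\E[I^4]<\infty$, which is \emph{not} assumed; with the available $(2+\alpha)$-moment, getting the chaining exponent above $1$ on the controlling function is exactly the delicate point, and monotonicity of the two pieces of $\boldsymbol{E}^{(n)}_i$ alone does not upgrade pointwise negligibility to a uniform statement at the $\sqrt{m_n}$ scale. In short, to close your gap you would essentially have to re-run the paper's bracketing-CLT argument on the residual (envelope $\le\max(\baran_i(\infty),\mu_I\zinf)$, $L^2$ increments $O(|t-s|)$ via the Poisson-thinning bound, finite bracketing integral), at which point the decomposition buys no technical saving. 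So: same f.d.d.\ proof, a genuinely different and appealing tightness strategy, but with a concrete unclosed step that you yourself flag as ``where the real work lies.''
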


\begin{proof}
We use the standard method of proving convergence of finite-dimensional distributions and asymptotic tightness; see e.g.~\citet[Theorem 1.5.4]{Vaar96}. The details are lengthy, so we defer them to Appendix~\ref{appA}.
\end{proof}

The next theorem gives a central limit theorem for the final outcome of the embedded multi-community epidemic and is used to prove Theorem~\ref{multifinalmain}.

\begin{theorem}
\label{finalCLT}
Suppose that $R_*>1$ and $\sqrt{m_n}\tildetn_0 \convp 0$  as $n \to \infty$.  Then,
for any $\epsilon \in (0, \tau)$,
\begin{equation}
\label{finalCLT1}
\sqrt{m_n}
\left(\begin{array}{c}
\barxbn(\tildetinfnep)-\xn(\taun)\\
\tildezbn(\tildetinfnep)-\zn(\taun)\\
\tildeabn(\tildetinfnep)-\an(\taun)\\
\end{array}
\right)
\convD \bN \qquad \mbox{as } n \to \infty,
\end{equation}
where $\bN$ is a three-dimensional zero-mean normal random vector with variance-covariance matrix given by~\eqref{finalSigmaZ}.
\end{theorem}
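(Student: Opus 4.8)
The plan is to combine the functional central limit theorem of Theorem~\ref{weakconvr} with the fixed-point relation that defines $\tildetinfnep$, via a random-time-change argument and the uniform convergence of derivatives from Lemma~\ref{GC1}. Write $\brn(t)=(\xn(t),\zn(t),\an(t))^{\top}$; recall $\br(t)=\lim_{n\to\infty}\brn(t)=(x(t),z(t),a(t))^{\top}$, which by~\eqref{xzadefs} and~\eqref{bbDef} equals $x(t)\bb$. Then the left-hand side of~\eqref{finalCLT1} is exactly
\[
V_n:=\sqrt{m_n}\bigl(m_n^{-1}\bRbn(\tildetinfnep)-\brn(\taun)\bigr).
\]
Since $\sqrt{m_n}\tildetn_0\convp0$ forces $\tildetn_0\convp0$, Corollary~\ref{finalWL} gives $\tildetinfnep\convp\tau$ and Lemma~\ref{tauCgce} gives $\taun\to\tau$; by construction $\tildetinfnep$ solves $\tildeabn(\tildetinfnep)=\tildetinfnep-\tildetn_0$, and $\an(\taun)=\taun$ by continuity of $\an$. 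I would then decompose $V_n=\boldsymbol{G}_n+\sqrt{m_n}(\brn(\tildetinfnep)-\brn(\taun))$, where $\boldsymbol{G}_n:=\sqrt{m_n}(m_n^{-1}\bRbn(\tildetinfnep)-\brn(\tildetinfnep))$.

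For the first term, fix $T>\tau$. The limit process $\bW$ of~\eqref{Wdef} has covariance $x(t\wedge s)(1-x(t\vee s))\bb\bb^{\top}$ (by~\eqref{sigmatt}); being a time-changed Brownian bridge multiplied by the constant vector $\bb$, it has almost surely continuous sample paths. Evaluating the convergence~\eqref{weakconvreq} on $[0,T]$ at the random time $\tildetinfnep\convp\tau$, a standard random-time-change argument---using the asymptotic equicontinuity that accompanies supremum-metric convergence together with continuity of $\bW$ at the deterministic point $\tau$---yields $\boldsymbol{G}_n\convD\bW(\tau)$. For the second term, a mean-value expansion of the differentiable functions $\xn,\zn,\an$ between $\tildetinfnep$ and $\taun$ gives $\sqrt{m_n}(\brn(\tildetinfnep)-\brn(\taun))=D_nU_n$, with $U_n:=\sqrt{m_n}(\tildetinfnep-\taun)$ and $D_n=(\xndash(\xi_{1n}),\zndash(\xi_{2n}),\andash(\xi_{3n}))^{\top}$, each $\xi_{jn}$ lying between $\tildetinfnep$ and $\taun$ so that $\xi_{jn}\convp\tau$; by Lemma~\ref{GC1}, the identity $\andash=\mu_I\zndash$, and continuity of $x',z',a'$, we obtain $D_n\convp(x'(\tau),z'(\tau),a'(\tau))^{\top}=x'(\tau)\bb$. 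Thus $V_n=\boldsymbol{G}_n+D_nU_n$.

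Next I would extract the joint limit of $(\boldsymbol{G}_n,U_n)$. Using $\tildeabn(\tildetinfnep)=\tildetinfnep-\tildetn_0$ and $\an(\taun)=\taun$, the third coordinate of $V_n=\boldsymbol{G}_n+D_nU_n$ rearranges to
\[
U_n\bigl(1-\andash(\xi_{3n})\bigr)=[\boldsymbol{G}_n]_3+\sqrt{m_n}\tildetn_0 .
\]
Since $\andash(\xi_{3n})\convp a'(\tau)<1$ and $\sqrt{m_n}\tildetn_0\convp0$, the continuous mapping and Slutsky theorems give $(\boldsymbol{G}_n,U_n)\convD(\bW(\tau),[\bW(\tau)]_3/(1-a'(\tau)))$, hence $V_n\convD\bW(\tau)+x'(\tau)\bb\,[\bW(\tau)]_3/(1-a'(\tau))$. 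Writing $\bW(\tau)=\beta\bb$ with $\beta\sim{\rm N}(0,x(\tau)(1-x(\tau)))$ (legitimate since $\Sigma(\tau,\tau)=x(\tau)(1-x(\tau))\bb\bb^{\top}$), using $[\bW(\tau)]_3=\beta\mu_I\zinf$ and $a'(\tau)=\mu_I\zinf x'(\tau)$, the scalar factor collapses, $1+x'(\tau)\mu_I\zinf/(1-a'(\tau))=1/(1-a'(\tau))$, so $V_n\convD\beta\bb/(1-a'(\tau))$, a zero-mean Gaussian vector with covariance $x(\tau)(1-x(\tau))(1-a'(\tau))^{-2}\,\bb\bb^{\top}$. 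Finally $a'(\tau)=\mu_I\zinf x'(\tau)=\zinf\mu_I\lg(1-\pi_W)(1-x(\tau))$ by~\eqref{xzadefs}, so this covariance is exactly $\Sigma_{\bN}$ of~\eqref{finalSigmaZ}, which proves the theorem.

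The hard part will be the interplay of the random-time-change step with the inversion of the implicit fixed-point relation: one must justify evaluating the weakly convergent processes at the data-dependent, only implicitly defined time $\tildetinfnep$ (this is precisely why the stronger supremum-metric convergence of Theorem~\ref{weakconvr}, and hence asymptotic equicontinuity, together with path continuity of $\bW$, are needed), and then read off the limit of $\sqrt{m_n}(\tildetinfnep-\taun)$ from the fixed-point equation while keeping everything jointly convergent so that Slutsky's theorem applies at the end. By contrast, the final collapse of the limiting covariance to $\Sigma_{\bN}$ is a short computation exploiting $\br(t)=x(t)\bb$ and $a(t)=\mu_I\zinf x(t)$.
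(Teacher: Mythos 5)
Your proposal is correct and takes essentially the same route as the paper: both evaluate the functional CLT of Theorem~\ref{weakconvr} at the random time $\tildetinfnep$ (via Slutsky plus continuous mapping, using continuity of $\bW$ at $\tau$), apply the mean value theorem to $\brn$ between $\tildetinfnep$ and $\taun$ together with Lemma~\ref{GC1}, extract $\sqrt{m_n}(\tildetinfnep-\taun)$ from the third (severity) coordinate using the fixed-point relations $\tildeabn(\tildetinfnep)=\tildetinfnep-\tildetn_0$ and $\an(\taun)=\taun$, and then collapse the covariance using $a'(\tau)=\mu_I\zinf x'(\tau)$. The only difference is presentational: the paper derives the three coordinate limits one at a time and then assembles the matrix $B$, whereas you carry the vector form throughout and exploit the rank-one identity $\bW(\tau)=\beta\bb$ at the outset, which makes the final covariance computation slightly more transparent but is the same algebra.
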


\begin{proof}
First note that, by definition,
\begin{equation}
\label{finalembed}
\sqrt{m_n}\left(\barxbn(\tildetinfnep),\tildezbn(\tildetinfnep),\tildeabn(\tildetinfnep)\right)^{\top}
=\frac{1}{\sqrt{m_n}}\bRbn(\tildetinfnep).
\end{equation}
Recall from Corollary \ref{finalWL} that $\tildetinfnep \convp \tau$ as $n \to \infty$.  Thus, setting $T>\tau$ in Theorem~\ref{weakconvr} and using Slutsky's lemma and the continuous mapping theorem \citep[e.g.][Example 1.4.7 and Theorem 1.3.6, respectively]{Vaar96} implies that
\begin{equation}
\label{CLT1}
\frac{1}{\sqrt{m_n}}\left(\bRbn(\tildetinfnep)-m_n \brn(\tildetinfnep) \right) \convD \bW(\tau) \qquad \mbox{as } n \to \infty,
\end{equation}
where $\{\bW(t) : t\geq0\}$ is defined in \eqref{Wdef}.

Now
\begin{align}
\label{MVT}
\sqrt{m_n}\left(\tildeabn(\tildetinfnep)-\an(\taun) \right)=&\sqrt{m_n}\left(\tildeabn(\tildetinfnep)-a^{(n)}(\tildetinfnep)\right)\\
&+
\sqrt{m_n}\left(a^{(n)}(\tildetinfnep)-\an(\taun)\right) . \nonumber
\end{align}
By the mean value theorem, there exists $\thetan$ lying between $\tildetinfnep$ and $\taun$, such that $\an(\tildetinfnep)-\an(\taun)=\left(\tildetinfnep-\taun\right) \andash(\thetan)$.
Recall from~\eqref{Tinfinity} that $\tildetinfnep=\tildetn_0+\tildeabn(\tildetinfnep)$ and from the definition of $\taun$ that $\an(\taun)=\taun$.  Thus,
\begin{align*}
\sqrt{m_n}\left(\an(\tildetinfnep)-\an(\taun)\right)&=\sqrt{m_n}\left(\tildeabn(\tildetinfnep)-\an(\taun) \right)\andash(\thetan)\\
&\qquad+\sqrt{m_n}\,\tildetn_0 \andash(\thetan),
\end{align*}
which on substituting into~\eqref{MVT} and rearranging yields
\begin{multline}
\label{MVT1}
  \sqrt{m_n} \left(1-\andash(\thetan)\right)\left(\tildeabn(\tildetinfnep)-\an(\taun)\right) \\
=  \sqrt{m_n}\left(\tildeabn(\tildetinfnep)-\an(\tildetinfnep)\right) +
\sqrt{m_n}\,\tildetn_0 \andash(\thetan).
\end{multline}

Now~\eqref{finalembed} and~\eqref{CLT1} imply that $\sqrt{m_n}\left(\tildeabn(\tildetinfnep)-a^{(n)}(\tildetinfnep)\right) \convD W_3(\tau)$ as $n \to \infty$.  By the sandwich principle, $\thetan \convp \tau$ as $n \to \infty$, since $\taun \to \tau$ (by Lemma~\ref{tauCgce}) and $\tildetinfnep \convp \tau$.  Hence, recalling $a^{(n)}(\cdot)=\mu_I z^{(n)}(\cdot)$ and using~\eqref{ZGCD} in Lemma~\ref{GC1},
$\andash(\thetan) \convp a'(\tau)$ as $n \to \infty$. Further, $\sqrt{m_n}\,\tildetn_0 \andash(\thetan) \convp 0$, since $\sqrt{m_n}\tildetn_0 \convp 0$.
Recall that $a'(\tau)<1$, so substituting these observations into~\eqref{MVT1} yields after application of Slutsky's theorem that
\begin{equation}
\label{Aconvd}
\sqrt{m_n}\left(\tildeabn(\tildetinfnep)-\an(\taun)\right) \convD (1-a'(\tau))^{-1}W_3(\tau)
\qquad \mbox{as } n \to \infty.
\end{equation}

Arguing in a similar fashion, using~\eqref{XGCD} instead of~\eqref{ZGCD}, shows that
\begin{align*}
\sqrt{m_n}&\left(\barxbn(\tildetinfnep)-\xn(\taun) \right)=\sqrt{m_n}\left(\barxbn(\tildetinfnep)-\xn(\tildetinfnep)\right)\\
&\qquad+
\xndash(\thetant)\left[\sqrt{m_n}\left(\tildeabn(\tildetinfnep)-\an(\taun)\right)
+\sqrt{m_n}\tildetn_0\right],
\end{align*}
for some $\thetant$ lying between $\tildetinfnep$ and $\taun$.  Using~\eqref{finalembed}, \eqref{CLT1}, \eqref{Aconvd} and similar observations to those above, it then follows that, as $n \to \infty$,
\begin{equation}
\label{Xconvd}
\sqrt{m_n}\left(\barxbn(\tildetinfnep)-\xn(\taun)\right) \convD W_1(\tau)+(1-a'(\tau))^{-1}x'(\tau)W_3(\tau).
\end{equation}
A similar argument  shows that
\begin{equation}
\label{Zconvd}
\sqrt{m_n}\left(\tildezbn(\tildetinfnep)-\zn(\tau)\right) \convD W_2(\tau)+(1-a'(\tau))^{-1}z'(\tau)W_3(\tau).
\end{equation}
Expressing~\eqref{Aconvd}-\eqref{Zconvd} in matrix form, and recalling~\eqref{finalembed},~\eqref{CLT1},~\eqref{sigmatt} and~\eqref{weakconvreq}, yields~\eqref{finalCLT1}, with $\Sigma_{\bN}=x(\tau)(1-x(\tau))B\bb \bb^{\top}B^{\top}$, where
%\begin{equation*}
%B=\left[\begin{array}{ccc}
%1&0& (1-a'(\tau))^{-1}x'(\tau)\\
%0 & 1 & (1-a'(\tau))^{-1}z'(\tau)\\
%0 & 0 & (1-a'(\tau))^{-1}
%\end{array}
%\right].
%\end{equation*}
\begin{equation*}
\setlength{\arraycolsep}{.3cm}
B=
\begin{bmatrix}
1&0& (1-a'(\tau))^{-1}x'(\tau)\\
0 & 1 & (1-a'(\tau))^{-1}z'(\tau)\\
0 & 0 & (1-a'(\tau))^{-1}
\end{bmatrix}.
\end{equation*}

Recall from~\eqref{xzadefs} that $x(t)=1- \e^{-\lg(1-\pi_W)t}, z(t)=\zinf x(t)$ and $a(t)=\mu_I \zinf x(t)$, so $x'(t)=\lg(1-\pi_W)(1-x(t))$. Simple algebra yields $$B \bb=\left[1-\zinf \mu_I \lg (1-\pi_W)(1-x(\tau))\right]^{-1}\bb$$ and~\eqref{finalSigmaZ} follows on recalling the expression~\eqref{equ:Rstar} for $R_*$.
\end{proof}

\subsubsection{Final outcome of epidemic $\Enm$; proofs of Theorems \ref{multifinalmain} and \ref{multifinalmain1}}
\label{finaloutcome}

\begin{proof}[Proof of Theorem \ref{multifinalmain}]
The proof proceeds in two stages.  We show first that given any $\epsilon>0$, there exists $\epsilon_1>0$ such that
$\P\left(\tildetinfn \ge \epsilon_1 \mid  \Gn \right) \ge 1-\epsilon$ for all sufficiently large $n$.  Then we use Corollary~\ref{finalWL} and Theorem~\ref{finalCLT} for the epidemic starting from when $\Enm$ has first infected at least $\log m_n$ communities.

Recall from Section~\ref{early} the branching processes $\BBcheck$ and $\BBncheck$, which approximate the process of infected communities in the early stages of the epidemic.
For $\delta \in (0,1)$, while
the number of communities, excluding community $0$, infected in $\Enm$ is not more than $\delta m_n$, the probability that a global infectious contact is
with a previously uninfected community is at least $1-\delta$, so the
process of infected communities in $\Enm$
is stochastically bounded below by the branching process $\BBn(\delta)=\BB(m_n n \bgn (1-\delta) A_n)$.
Let $\checkzn_C(\delta)$ denote the total size of the branching process $\BBn(\delta)$. Then,
\begin{equation}
\label{lowerbp}
\P\left(\hatznc \ge \delta m_n\right) \ge \P\left(\checkzn_C(\delta) \ge \delta m_n\right) \ge 1-\pi_n(\delta),
\end{equation}
where $\pi_n(\delta)$ is the extinction probability of $\BBn(\delta)$. Now
$\pi_n(\delta) \to \pi_G(\delta)$ as $n \to \infty$, where $\pi_G(\delta)$ is the extinction probability of $\BB(\lg (1-\delta) \bar{A})$,
and $\pi_G(\delta) \downarrow \pi_G$ as $\delta \downarrow 0$.  Further, $\pi_G<1$, since $R_*>1$.  Fix $\epsilon>0$ and recall from Section~\ref{early} that
\[
\P\left(\Gn\right)=\P\left(\hatznc \ge \log m_n\right) \to 1-\pi_G \qquad \mbox{as } n \to \infty.
\]
Then it follows from~\eqref{lowerbp} that there exist $\delta>0$ and $n_0 \in \mathbb{N}$ such that
\begin{equation}
\label{conddeltamn}
\P\left(\hatznc \ge \delta m_n  \mid  \Gn\right) \ge 1-\frac{\epsilon}{2} \qquad \mbox{for all }n \ge n_0.
\end{equation}

Now construct a realisation of $\Enm \mid  \Gn$ by running  $\Enm$ until at least $\log m_n$ communities have been infected. If the realisation of $\Enm$ infects fewer than $\log m_n$ communities, repeat the construction independently until an epidemic that infects at least $\log m_n$ communities is obtained.
Now use the embedding construction of Section~\ref{embedding} to construct two coupled processes on a population of $m_n'=m_n+1-\ceil{\log m_n}$ communities (i.e.~the ones not yet infected by $\Enm$) using the same $\left(Z^{(n)}_i(t),A_i^{(n)}(t)\right)$ $(i=1,2,\cdots,m_n')$, one with $\Tn_0=\TnL_0$, where $\TnL_0$ is the severity of $\Enm$ when at least $\log m_n$ communities first become infected, and one with $\Tn_0=\TnU_0$, where $\TnU_0$ is given by $\TnL_0+\AnU_0$ and $\AnU_0$ is obtained as follows.
Let $s_1^{(n)},s_2^{(n)},\cdots,s_{\kn}^{(n)}$
be the number of susceptibles remaining in each of the first $\kn$ communities infected by the epidemic when the $\kn$-th community becomes infected.  Then $\AnU_0$ is distributed as the sum of $\sum_{i=1}^{\kn} s_i^{(n)}$ independent copies of $I$, independently of the rest of the construction.  The embedding construction can be modified, by allowing appropriate infection of community $0$, so that $\Enm \mid  \Gn$ is stochastically bounded between the processes with $\Tn_0=\TnL_0$ and $\Tn_0=\TnU_0$.

The weak law of large numbers implies that $\sqrt{m_n'} \frac{1}{n m_n'}\TnU_0 \convp 0$ as $n \to \infty$, hence also $\sqrt{m_n'} \frac{1}{n m_n'}\TnL_0 \convp 0$.  Consider the epidemic obtained by running $\Enm$ until $\ceil{\log m_n}$ communities are infected and then using the embedding construction with $\Tn_0=\TnL_0$.  Let $\hatZnL_C$ be the number of communities infected by this epidemic and $\checkZnL_C=\hatZnL_C-\ceil{\log m_n}$ be the number of communities infected whilst using the embedding construction.  Note that the branching process $\BBn(\delta)$ is also a lower bound for this epidemic and~\eqref{conddeltamn} holds with $\hatznc$ replaced by $\hatZnL_C$.  It then follows easily that, given any $\epsilon>0$, there exist $\delta'>0$ and $n_1 \in \mathbb{N}$ such that
\begin{equation}
\label{conddeltamn1}
\P\left(\hatZnL_C \ge \delta' m_n'\right) \ge 1-\frac{\epsilon}{2} \qquad \mbox{for all }n \ge n_1.
\end{equation}

In the embedding construction of this epidemic, for $t \ge 0$, let $\barybnL(t)=\frac{1}{m_n'}\sum_{i=1}^{m_n'} 1_{\{ Z^{(n)}_i(t) > 0\}}$ (cf.~\eqref{xbullet}).  Arguing as in the proof of Lemma~\ref{GC}(a) shows that, for all $t > 0$, as $n \to \infty$, $\barybnL(t) \convp y(t)=1-\e^{-\lg t}$. Let $\epsilon_1=-\frac{1}{\lambda_G} \log(1-\frac{\delta'}{2})$.  Define $\tildetinfnL$ analogously to $\tildetinfn$.  Then
\begin{equation}
\label{tinfLeps1}
\P\left(\tildetinfnL < \epsilon_1 \right)=\P\left(\tildetinfnL < \epsilon_1, \hatZnL_C \ge \delta' m_n'\right)+\P\left(\tildetinfnL < \epsilon_1, \hatZnL_C <\delta' m_n'\right).
\end{equation}
Now, $\P\left(\tildetinfnL < \epsilon_1, \hatZnL_C \ge \delta' m_n'\right) \le \P\left(\barybnL(\epsilon_1)\ge \delta'\right)$, since $\barybnL(t)$ is nondecreasing in $t$, and $\barybnL(\epsilon_1) \convp y(\epsilon_1)=\frac{\delta'}{2}$ as $n \to \infty$.  Thus, the first term on the right-hand side of~\eqref{tinfLeps1} is smaller than $\frac{\epsilon}{2}$ for all sufficiently large $n$.  The second term is smaller than $\frac{\epsilon}{2}$ for all $n \ge n_1$, using~\eqref{conddeltamn1}, so in an obvious notation that there exists $\epsilon_1>0$ and $n_2\in \mathbb{N}$ such that for all $n \ge n_2$,
\begin{equation}
\label{ptinfLU}
\P\left(\tildetinfnU \ge \epsilon_1\right)\ge \P\left(\tildetinfn \ge \epsilon_1 \mid  \Gn\right)\ge\P\left(\tildetinfnL \ge \epsilon_1\right)\ge 1-\epsilon.
\end{equation}
(The first two probability inequalities in~\eqref{ptinfLU} follow immediately from $\Enm \mid  \Gn$ being stochastically bounded between the embedding processes with $\Tn_0=\TnL_0$ and $\Tn_0=\TnU_0$.)

Recall that $\znmt$ is the total number of individuals infected in $\Enm$ and let  $\znmt=\znma+\znmb$, where $\znma$ is the total number of individuals ever infected in the first $\kn$ other communities infected by $\Enm$.  Then,
\begin{equation}
\label{ztildeab}
\tznmt=\tznma+\frac{m_n'}{m_n}\tznmb,
\end{equation}
where $\tznma=\frac{1}{n m_n} \znma $ and $\tznmb=\frac{1}{nm_n'}\znmb$.
Now $0 \le \tznma \le \frac{\kn}{m_n}$, so $\tznma \convp 0$ as $n \to \infty$.  Further, for $n \ge n_2$, in view of~\eqref{ptinfLU}, with probability at least $1-\epsilon$,
\begin{equation}
\label{ztildebbounds}
\tildezbn(\tildetinfnLep) \le \tznmb \le \tildezbn(\tildetinfnUep),
\end{equation}
where $\tildezbn(\tildetinfnLep)$ and $\tildezbn(\tildetinfnUep)$ are as in Corollary~\ref{finalWL} and Theorem~\ref{finalCLT} in Section~\ref{finmod}, but with $m_n$ replaced by $m_n'=m_n+1-\kn$.  By Corollary~\ref{finalWL}, $\tildezbn(\tildetinfnLep) \convp z(\tau)$ and $\tildezbn(\tildetinfnUep) \convp z(\tau)$ as
$n \to \infty$, so since $\epsilon>0$ can be arbitrarily small given that $\Gn$ occurs, and $\lim_{n \to \infty}\frac{m_n'}{m_n}=1$, it follows using~\eqref{ztildeab} that
$\tznmt \mid \Gn \convp z(\tau)$ as $n \to \infty$.  The other results in part (a) are proved similarly.

Turning to part (b),
\begin{align*}
\sqrt{m_n}&\left(\tznmt-\zn(\taun)\right) \\
 & =\sqrt{m_n}\tznma+\sqrt{\frac{m_n'}{m_n}}\left[\sqrt{m_n'}\left(\tznmb-\zn(\taun)\right)\right] -\frac{m_n-m_n'}{\sqrt{m_n}}\zn(\taun).
\end{align*}
Now $0 \le \sqrt{m_n}\tznma \le \frac{\kn}{\sqrt{m_n}}$, so $ \sqrt{m_n}\tznma \convp 0$ as $n \to \infty$, $\lim_{n \to \infty}\sqrt{\frac{m_n'}{m_n}}=1$,
$\lim_{n \to \infty}\frac{m_n-m_n'}{\sqrt{m_n}}=0$ and $\lim_{n \to \infty} \zn(\taun)=z(\tau)$, so $\sqrt{m_n}\left(\tznmt-\zn(\taun)\right)$ and
$\sqrt{m_n'}\left(\tznmb-\zn(\taun)\right)$ have the same limiting distribution by Slutsky's theorem.  Using similar decompositions and bounds to~\eqref{ztildeab} and~\eqref{ztildebbounds}  for $\barznmc$ and
$\tanm$, and the fact that $\epsilon>0$ can be arbitrarily small given that $\Gn$ occurs, part (b) follows using two applications of~Theorem~\ref{finalCLT} applied to the process with $m_n'$ communities, one with $\tildetinfnep=\tildetinfnLep$ and one with $\tildetinfnep=\tildetinfnUep$.
\end{proof}

The proof of Theorem~\ref{multifinalmain1} requires Lemmas~{\ref{pn_znhat_convlemma}} and~{\ref{znconvlem}}, which concern the single-community epidemic $\En(t)$ defined in Definition~\ref{defEpsnt}.
Lemma~\ref{znconvlem}, whose proof requires Lemma~\ref{pn_znhat_convlemma}(a), is a new result concerning the rate of convergence of the expected fraction of the population infected in a homogeneously mixing population SIR epidemic to its limiting value as the population size $n \to \infty$.

\begin{lemma}
\label{pn_znhat_convlemma}
Suppose that $R_0>1$ and conditions (C1)--(C4) hold, and define the functions $p$, $\png$ and $\hatzng$ on the domain $[0,\infty)$ by $p(t)=1-\e^{-\lambda_G(1-\pi_W)t}$, $$\png (t) = \P\left(\Zn (t) \ge \ngam \right) \quad \mbox{and}\quad \hatzng (t) = \E\left[\barzn(t) \mid \Zn (t) \ge \ngam\right].$$
\begin{enumerate}
\item For any $\gamma\in(0,\delta/4)$, with $\delta\in(0,2)$ as in (C1), and any $T>0$,
\begin{equation}
\label{pnconv}
\lim_{n \to \infty} \sqrt{m_n} \sup_{0 \le t \le T}\left|\png (t)-p(t)\right|=0.
\end{equation}
and
\begin{equation}
\label{znhatconv}
\lim_{n \to \infty} \sqrt{m_n} \sup_{0 \le t \le T}\left|\hatzng (t)- \zinf \right|=0.
\end{equation}

\item If condition (C5) also holds then, for any $T>0$,
\begin{equation}
\label{pnconvlog}
\lim_{n \to \infty} \sqrt{m_n} \sup_{0 \le t \le T}\left|\P\left(\Zn (t) \ge \log n \right)-p(t)\right|=0.
\end{equation}
\end{enumerate}
\end{lemma}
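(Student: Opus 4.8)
The plan is to work with the identification, noted in Section~\ref{embedding}, of $\Zn(t)$ as the final size of a homogeneously mixing SIR epidemic on $n$ individuals with $K_n:=n-S_n\sim\Bin(n,1-\pi_n(t))$ initial infectives, to condition on $K_n$, and to couple each $k$-infective sub-epidemic with $k$ independent Galton--Watson processes. Writing $c_n:=n^2m_n\bgn$, so that $c_n\to\lg$ and $\sqrt{m_n}(c_n-\lg)\to0$ by (C2), and using $\pi_n(t)=\e^{-c_nt/n}$, one has $n(1-\pi_n(t))=c_nt+O(1/n)$ uniformly for $t\in[0,T]$, while $n(1-\pi_n(t))\le n(1-\pi_n(T))\to\lg T$ is bounded. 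Given $K_n=k$, the sub-epidemic generated by the $k$ initial infectives and their within-community descendants couples with $k$ i.i.d.\ copies of $\BB(\bwn(n-k)I)$, run until their combined total progeny first reaches $\ngam$ (respectively $\log n$ in part~(b)); a birthday-type union bound over the at most $\ngam$ (respectively $\log n$) births shows this coupling fails with probability $O(n^{2\gamma-1})$ (respectively $O((\log n)^2/n)$), uniformly in $k\le\ngam$ and in $t$. Let $\pi_W^{(n,k)}$ be the extinction probability of $\BB(\bwn(n-k)I)$: since $R_0>1$, the implicit function theorem applied to $\pi=\phi_I(\bwn(n-k)(1-\pi))$ gives $|\pi_W^{(n,k)}-\pi_W|=O(|n\bwn-\lw|+k/n)$, and (C4) makes $\phi_I$ finite on an interval to the left of the origin, so the offspring PGF $\hat{f}$ of $\BB(\lw I)$ conditioned on extinction has radius of convergence exceeding $1$ and hence its total progeny $\hat{T}$ has exponentially decaying tails.

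For~\eqref{pnconv} I would restrict to $k\le n^{\gamma/2}$, the event $\{K_n\ge n^{\gamma/2}\}$ having probability smaller than any power of $n$ by a Chernoff bound since $n(1-\pi_n(t))$ is bounded. For such $k$ the coupling shows that $\P(\Zn(t)\ge\ngam\mid K_n=k)$ differs from $\P(\text{at least one of the }k\text{ processes survives})=1-(\pi_W^{(n,k)})^k$ by at most the coupling-failure probability $O(n^{2\gamma-1})$ plus the chance that the $k$ processes collectively reach combined size $\ngam$ yet all die out, which is at most $k\,\P(\hat{T}\ge\ngam/k)$ and hence super-polynomially small in $n$ (exponential tails of $\hat{T}$, and $\ngam/k\ge n^{\gamma/2}$). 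Replacing $\pi_W^{(n,k)}$ by $\pi_W$ and summing over $k$ against the $\Bin(n,1-\pi_n(t))$ weights, the binomial theorem collapses the sum and gives, uniformly on $[0,T]$,
\[
\png(t)=1-\bigl(1-(1-\pi_n(t))(1-\pi_W)\bigr)^n+O(n^{2\gamma-1})+O(1/n)+O\bigl(|n\bwn-\lw|\bigr),
\]
up to a further error super-polynomially small in $n$. An elementary expansion of the $n$th power, using $n(1-\pi_n(t))=\lg t+(c_n-\lg)t+O(1/n)$, shows it lies within $O(|c_n-\lg|)+O(1/n)$ of $\e^{-\lg(1-\pi_W)t}=1-p(t)$ uniformly on $[0,T]$. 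Multiplying through by $\sqrt{m_n}$: the term $\sqrt{m_n}\,n^{2\gamma-1}$ vanishes precisely because (C1) forces $\sqrt{m_n}=o(n^{1-\delta/2})$, which is $o(n^{1-2\gamma})$ since $\gamma<\delta/4$; $\sqrt{m_n}|c_n-\lg|\to0$ by (C2); $\sqrt{m_n}|n\bwn-\lw|\to0$ by (C3); and $\sqrt{m_n}/n\to0$ together with $\sqrt{m_n}$ times a super-polynomially small quantity tends to $0$ by (C1); whence~\eqref{pnconv}.

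For~\eqref{znhatconv} I would write $\hatzng(t)-\zinf=\E\bigl[(\barzn(t)-\zinf)1_{\{\Zn(t)\ge\ngam\}}\bigr]/\png(t)$ and again condition on $K_n$ and couple. The essential extra ingredient is a quantitative refinement of Theorem~\ref{singleSIR}(b)(ii): the expected fraction infected in a \emph{major} within-community outbreak started by $k$ infectives at rate $\bwn$ equals $\zinf+O((k+1)/n)+O(|n\bwn-\lw|)$, where the $O(1/n)$ is an Edgeworth-type term read off the exact final-size representation~\eqref{znGont} (the $n^{-1/2}$ coefficient vanishes because the limit in Theorem~\ref{singleSIR}(b)(iii), available here via (C3), is centred at $\zinf$), and the $O(|n\bwn-\lw|)$ records the smooth dependence of $\zinf$ on the infection rate. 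Using $0\le\barzn(t)\le1$ to discard the super-polynomially unlikely event $\{K_n\ge n^{\gamma/2}\}$ and the contribution of the outbreaks that reach size $\ngam$ without becoming major, this yields $\bigl|\E[(\barzn(t)-\zinf)1_{\{\Zn(t)\ge\ngam\}}]\bigr|\le\bigl(O(1/n)+O(|n\bwn-\lw|)\bigr)\png(t)$ plus a super-polynomially small error which moreover inherits a factor compensating the $1/\png(t)$ blow-up near $t=0$. Since $\png(t)\ge c\,p(t)$ for all $t$ (by~\eqref{pnconv} and $p(t)>0$ for $t>0$), dividing by $\png(t)$ and multiplying by $\sqrt{m_n}$ leaves $O(\sqrt{m_n}/n)+O(\sqrt{m_n}|n\bwn-\lw|)\to0$ by (C1) and (C3), uniformly in $t\in[0,T]$.

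Part~(b) is the same argument with $\ngam$ replaced by $\log n$ throughout; every estimate carries over (the collision error $O((\log n)^2/n)$ is even smaller), except that the probability that a branching process reaches size $\ge\log n$ but dies out is now $\P(\hat{T}\ge\log n)$, which by the sharp tail asymptotics for the total progeny of a subcritical Galton--Watson process with analytic offspring PGF equals $O(s_0^{-\log n})=O(n^{-\log s_0})$, with $s_0$ the tangency constant defined just before the statement (equivalently the radius of convergence of the total-progeny PGF $F$, $F(s)=s\hat{f}(F(s))$). Condition (C5) furnishes $\theta'\in(0,2\log s_0)$ with $m_nn^{-\theta'}\to0$, so $\sqrt{m_n}\,n^{-\log s_0}=(m_nn^{-\theta'})^{1/2}\,n^{\theta'/2-\log s_0}\to0$ (the first factor vanishes by (C5), the second since $\theta'/2<\log s_0$), and~\eqref{pnconvlog} follows. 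I expect the main obstacle to be the quantitative single-community input behind~\eqref{znhatconv}: whereas~\eqref{pnconv} needs only crude exponential tail bounds and a deterministic expansion, the $O(1/n)$ rate for the conditional expected fraction infected in a major outbreak --- with the rate parameter varying with $n$, and with uniformity in the number of initial infectives and in $t$ down to $0$ --- is genuinely delicate and is essentially the content that makes the companion Lemma~\ref{znconvlem} a new result; a secondary difficulty in part~(b) is establishing the sharp tail constant $s_0$ for the subcritical total progeny.
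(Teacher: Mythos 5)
Your parts (a)--\eqref{pnconv} and (b)--\eqref{pnconvlog} follow essentially the same route as the paper's Appendix~\ref{appB1}: couple $\Zn(t)$ to a Poisson-offspring branching process via a birthday argument controlled by (C1), split off $\P(B^{(n)}(t)=\infty)$, track $\pi_n\to\pi_W$ via the implicit function theorem with (C3), and control $\P(B^{(n)}(t)\in[n^\gamma,\infty)\mid B^{(n)}(t)<\infty)$ by Markov's inequality with the tangency constant $s_0$, with (C5) entering only in the $\log n$ version. The paper works directly with the single branching process $\BB(n^2m_n\bgn t,\,n\bwn I)$ with a $\Po(n^2m_n\bgn t)$ number of ancestors, which avoids conditioning on the binomial count $K_n$ and the subsequent binomial-theorem collapse that your version needs, and also avoids the small awkwardness in your choice of offspring mean $\bwn(n-k)$ (which decreases during the sub-epidemic and is not the exact coupling mean); these are cosmetic differences and your route could be pushed through for those two displays.

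The gap is in \eqref{znhatconv}. You reduce $\hatzng(t)-\zinf$ to $\E\left[(\barzn(t)-\zinf)\mathbf{1}_{\{\Zn(t)\ge n^\gamma\}}\right]/\png(t)$ and then invoke, as a black box, that the expected fraction infected in a major $k$-infective within-community outbreak equals $\zinf+O((k+1)/n)+O(|n\bwn-\lw|)$, saying this can be ``read off'' the final-size representation~\eqref{znGont}. That representation involves the implicitly defined $\alpha_i^{(n)}$, and extracting an $O(1/n)$ rate for the conditional mean from it is not an Edgeworth formality; it is exactly the quantitative content that needs to be established and that makes the companion Lemma~\ref{znconvlem} a new result, as you yourself note at the end. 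The paper proves \eqref{znhatconv} by an entirely different device, which you do not mention: susceptibility sets (Appendix~\ref{sec:suscSets}--\ref{sec:B3}). Writing $\hatzng(t)=1-\frac{n-\ceil{\ngam}}{n}\qingamt$ with $\qingamt$ the conditional probability that an individual still susceptible when the cumulative infectives first reach $\ceil{\ngam}$ is never infected, the problem becomes one of showing $\sqrt{m_n}\,|\qingamt-\pipo(\lw\mu_I)|\to0$ uniformly on $[0,T]$. This is done by constructing the susceptibility set of such an individual in the directed contact graph, coupling its size to lower- and upper-bounding Galton--Watson processes with Poisson offspring whose parameters converge to $\lw\mu_I$ at rate $o(m_n^{-1/2})$ under (C1)--(C4), and comparing with the combinatorial avoidance probability $\theta_{N,a}(\cdot)$ of \eqref{thetaNak}; see \eqref{pavoidbounds}--\eqref{mntheta1} and \eqref{qinbound}--\eqref{qugameq}. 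That construction yields the needed $o(m_n^{-1/2})$ control on $\qingamt$ directly and uniformly in $t$, without ever producing a pointwise $O(1/n)$ asymptotic for the conditional mean size of a major outbreak. Your proposal leaves precisely this step unproved.
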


\begin{proof}
The proof is lengthy and thus deferred to Appendix~\ref{appB}. We note here that only (C1)--(C3) are required for~\eqref{pnconv}.
\end{proof}
Note that, since $\delta\in(0,2)$, we must have $\gamma<\frac12$ in part (a) of the Lemma. Note also that the quantity $p(t)$ defined in the lemma is the same as $x(t)$; however the $p$ notation is more natural in the context of these lemmas and their proofs.  Now recall that $\zn(t)=n^{-1}\E[\Zn (t)]$.
\begin{lemma}
\label{znconvlem}
Suppose $R_0>1$ and that (C1)--(C4) hold. Then for any $T>0$,
\begin{equation*}
\lim_{n \to \infty} \sqrt{m_n} \sup_{0 \le t \le T}\left| z^{(n)}(t)-z(t)\right|=0.
\end{equation*}
\end{lemma}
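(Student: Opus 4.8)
The plan is to reduce everything to Lemma~\ref{pn_znhat_convlemma}(a) by truncating the within-community epidemic size at level $\ngam$. Fix any $\gamma\in(0,\delta/4)$, with $\delta\in(0,2)$ as in condition (C1); this is exactly the range of $\gamma$ for which Lemma~\ref{pn_znhat_convlemma}(a) applies. Since $\barzn(t)=\Zn(t)/n$, I would split
\begin{equation*}
\zn(t)=\E\left[\barzn(t)\right]=\E\left[\barzn(t)1_{\{\Zn(t)\ge\ngam\}}\right]+\E\left[\barzn(t)1_{\{\Zn(t)<\ngam\}}\right].
\end{equation*}
By the definitions of $\png$ and $\hatzng$ in Lemma~\ref{pn_znhat_convlemma}, the first term equals $\png(t)\hatzng(t)$, while the second is nonnegative and at most $n^{\gamma-1}$, uniformly in $t$, since on $\{\Zn(t)<\ngam\}$ we have $\barzn(t)<n^{\gamma-1}$.

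Next I would compare with the limit $z(t)=\zinf x(t)=\zinf p(t)$ (recall from the remark after Lemma~\ref{pn_znhat_convlemma} that $p(\cdot)$ coincides with $x(\cdot)$). Writing $r_n(t)$ for the discarded lower-tail term, so $0\le r_n(t)\le n^{\gamma-1}$, the triangle inequality gives, uniformly for $t\in[0,T]$,
\begin{equation*}
\left|\zn(t)-z(t)\right|\le\png(t)\left|\hatzng(t)-\zinf\right|+\zinf\left|\png(t)-p(t)\right|+n^{\gamma-1}.
\end{equation*}
Multiplying by $\sqrt{m_n}$, taking the supremum over $[0,T]$, and bounding $\png(t)\le1$, it then suffices to check that each of
\begin{equation*}
\sqrt{m_n}\sup_{0\le t\le T}\left|\hatzng(t)-\zinf\right|,\qquad
\sqrt{m_n}\sup_{0\le t\le T}\left|\png(t)-p(t)\right|,\qquad
\sqrt{m_n}\,n^{\gamma-1}
\end{equation*}
tends to $0$ as $n\to\infty$. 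The first two do so by Lemma~\ref{pn_znhat_convlemma}(a), whose hypotheses, $R_0>1$ and (C1)--(C4), are precisely those assumed in Lemma~\ref{znconvlem}. For the third, $\sqrt{m_n}\,n^{\gamma-1}=(m_n n^{2\gamma-2})^{1/2}$; since $2\gamma<\delta/2<\delta$ and $n\ge1$ we have $m_n n^{2\gamma-2}\le m_n n^{\delta-2}\to0$ by (C1), so this term vanishes too. That completes the argument.

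There is no genuine obstacle at this level: all of the analytic work has been pushed into Lemma~\ref{pn_znhat_convlemma}(a), which is proved in Appendix~\ref{appB}. The only point requiring care is the choice of truncation level --- $\ngam$ must be small enough that the neglected lower tail of $\Zn(t)$ contributes $o(m_n^{-1/2})$ to $\zn(t)$, which forces $2\gamma<\delta$ and hence $\gamma<\delta/4$, matching the admissible range in Lemma~\ref{pn_znhat_convlemma}(a). Via the identity $\an(\cdot)=\mu_I\zn(\cdot)$ recorded after~\eqref{zn1}, the same conclusion holds for $\an$ in place of $\zn$, with no extra work.
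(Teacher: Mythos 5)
Your proposal is correct and takes essentially the same route as the paper: both decompose $\zn(t)$ by truncating at $\Zn(t)\ge \ngam$, bound the lower-tail contribution by $n^{\gamma-1}$, and invoke Lemma~\ref{pn_znhat_convlemma}(a) together with condition (C1) to handle the three resulting terms. The only difference is cosmetic (you apply the triangle inequality to $\png\hatzng - p\zinf$ in a slightly different but equivalent grouping, and write the discarded term as an unconditional expectation rather than $(1-\png)\hatwng$), so no comparison is needed.
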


\begin{proof}
Suppose that $\delta$ satisfies condition (C1).  Fix $\gamma \in (0,\frac{\delta}{4})$, so then we have $m_n n^{4\gamma-2} \to 0$ as $n \to \infty$.
In addition to the notation $p(t)$, $\png (t)$ and $\hatzng (t)$ defined in Lemma~\ref{pn_znhat_convlemma} we define $\hatwng (t) = \E\left[\barzn(t) \mid \Zn (t) < \ngam\right]$.
Then
\[
z^{(n)}(t)=(1-\png (t))\hatwng (t)+\png (t)\hatzng (t),
\]
so recalling from~\eqref{xzadefs} that $z(t)=p(t)\zinf$,
\begin{equation}
\label{modzn-z}
\left| z^{(n)}(t)-z(t)\right| \le \left|\left(1-\png (t)\right)\hatwng (t) \right|+
\left| \png (t)\hatzng (t) -p(t) \zinf \right|.
\end{equation}

Now $0 \le \hatwng (t) \le n^{-1} n^\gamma$ for all $t \in [0,T]$, so
\begin{equation}
\label{limit00}
\lim_{n \to \infty} \sqrt{m_n} \sup_{0 \le t \le T}\left|\left(1-\png (t)\right)\hatwng (t) \right| =0,
\end{equation}
since $m_n n^{2(\gamma-1)} \to 0$ as $n \to \infty$. Also,
\begin{equation*}
\left| \png (t)\hatzng (t) -p(t) \zinf \right| \le \hatzng (t)\left|\png (t)-p(t)\right|+p(t)\left|\hatzng (t)-\zinf \right|.
\end{equation*}
Now $0 \le \hatzng (t) \le 1$ and $0 \le p(t) \le 1$ for all $t \in [0,T]$, so in view of~\eqref{modzn-z} and~\eqref{limit00}, the lemma follows using Lemma~\ref{pn_znhat_convlemma}.
\end{proof}

\begin{proof}[Proof of Theorem \ref{multifinalmain1}]
Recall that, for $t \ge 0$, $\xn(t)=\P\left(\Zn (t) \ge \log n \right)$ and $x(t)=p(t)$ , and from Lemma~\ref{tauCgce} that $\taun \to \tau$ as $n \to \infty$. Hence Lemma~\ref{pn_znhat_convlemma} implies that $\sqrt{m_n}(\xn(\taun)-x(\tau)) \to 0$ as $n \to \infty$.  A similar argument using Lemma~\ref{znconvlem} yields $\sqrt{m_n}(\zn(\taun)-z(\tau)) \to 0$ as $n \to \infty$.
Thus~\eqref{mnxnznanconv} holds and Theorem~\ref{multifinalmain1} follows.
\end{proof}

\section{Concluding comments}
\label{conclusions}
We have proved LLN and CLT type results for the final outcome of an SIR epidemic on a population of weakly connected communities. These limit theorems are valid for fixed or large numbers of large communities, and brief numerical results suggest that they provide good working approximations for the final outcome of the model in finite populations. The asymptotic form of the weak connectivity that we use leads, in the case of a fixed number of communities, to LLN type results with randomness in the limits and CLT type results with a mixture of normal random variables in the limits.
We note that in the case of a fixed number of communities a deterministic approximation to the model is not available: the scalings of the infection rates are such that the proportion ultimately infected will always have randomness from global (between-community) infections.
The additional randomness (compared to that observed in most models with similar structure) in the outcomes of our model contributes to the development of epidemic models that exhibit more variability in their outcomes than many models in the literature. The temporal properties of our model also exhibit such increased variability; this will be explored in future work.

We note further that in the case of a fixed infectious period our work gives results concerning a version of the planted partition or stochastic block model with a different scaling of edge probabilities than in most of the random graph literature. This model (see, e.g.~\cite{MosNeeSly2015}, Example 4.3 of \cite{BolJanRio2007}, Section 9.3.1 of \cite{vdH2021}) can be viewed as a multi-type version of the Gilbert / Erd\H{o}s-R\'{e}nyi graph $G(n,p)$, with self-type edge probabilities $a/n$ and between-type edge probabilities $b/n$, usually with $b<a$ to model community structure in social networks. Our model can be interpreted as a variant of this with between-type edge probabilities $b/n^2$.

There are several generalisations of our model that are likely to be of interest. Allowing for unequal community sizes would increase realism and be fairly straightforward to accommodate. In principle, allowing some variation in the asymptotic local and/or global infection rates should also be possible. For fixed $m_n=m$ we might allow $\lw$ to be different for each community and $\lg$ to be different (possibly asymmetrically) for each pair of communities. For $m_n$ increasing with $n$ each community could be one of various `types', each with characteristic within- and between-community infection rates (in the spirit of the aforementioned stochastic block model and multi-type epidemic models). Whilst analogues of the formulae in our main theorems to allow for these generalisations readily spring to mind, the additional notation and details of the proofs would doubtless be rather cumbersome.

The CLT in Theorem~\ref{multifinalmain} requires weaker conditions than that in Theorem~\ref{multifinalmain1} but is not useful in practical applications as the mean vector, which depends on $n$, is not available.  A natural further question is whether one can find a useful approximation, $z^{(n)}_A(t)$ say,  to  $z^{(n)}(t)$, so that $a^{(n)}_A(t) = \mu_I z^{(n)}_A(t)$ approximates $a^{(n)}(t)$, such that e.g.
\begin{equation*}
\lim_{n \to \infty} \sqrt{m_n} (z^{(n)}(\tau^{(n)})- z^{(n)}_A(\tau_A^{(n)}))=0,
\end{equation*}
where $\tau_A^{(n)}$ solves $t = a^{(n)}_A(t)$, under weaker conditions than (C1) (cf.\ equation~\eqref{mnxnznanconv}).

We expect that if community sizes, or even local infection dynamics, differ between communities then we can use ideas from Poissonian random graphs \citep{NorRei2006} to derive results similar to ours. To do this we still consider a sequence of epidemic models indexed by $n$ ($n= 1,2,\cdots$), with $m_n \to \infty$ as $n\to \infty$ and a per-pair global contact rate that still satisfies the second equation in~\eqref{infectionrates}. Let $n_i^{(n)}$ ($i=1,2,\cdots, m_n$) be the size of the $i$-th community in the model indexed by $n$ and assume that, for every $i=1,2,\cdots, m_n$, $n_i^{(n)}/n \to c_i$  for some strictly positive constant $c_i$ as $n \to \infty$. Furthermore, $\pi_i^{(n)}$, the probability that a single introduction in the $i$-th community ($i=1,2,\cdots$) leads to a large local outbreak converges to some $\pi_i$, and if $\pi_i>0$, then the number of individuals infected in such a large local outbreak divided by $n_i^{(n)}$ converges in probability to some strictly positive constant $z_i$. Let $A_i^{(n)}$ be the severity of this large outbreak, i.e.\ $A_i^{(n)}$ is the sum of the infectious periods of the individuals infected in the large outbreak.  By the above convergence assumptions $A_i^{(n)}/n \to z_i c_i \mathbb{E}[I]$ as $n \to \infty$. We assume that the joint empirical distribution of these large outbreak probabilities and normalised final sizes converge to some random vector with support contained in $[0,1]^2$. If we ignore all minor outbreaks (as we did in Section \ref{sec:multiComm}), then we can study the epidemic indexed by $n$ using a directed random graph representation \cite[e.g.][p.\ 64]{AndBri2000} in which the vertices represent communities and edges global contacts that would spark a large outbreak in the community at the `head' of the edge, if there has not been a large outbreak there before. For $i,j = 1,2,\cdots, m_n$ there is a directed edge from $i$ to $j$ with probability $1-\exp(-\beta_G^{(n)} A_i^{(n)}n_j^{(n)} \pi_j^{(n)})$, and for large $n$ this probability is well approximated by $1-\exp(-\lambda_G z_i c_i \mu_I c_j \pi_j)$. This random graph can be studied as a directed version of a Poissonian random graph \citep{NorRei2006} or a special case of an inhomogeneous random graph \citep{BolJanRio2007}. Because the theory developed in those papers is for undirected graphs, some work has to be done in checking which results can be translated to directed graphs, but we expect that this will cause no problems for LLN type results similar to Theorem 3.1 of \cite{BolJanRio2007}.

Lastly, we note that in the course of proving Theorem~\ref{multifinalmain1} we have established a new result concerning the rate of convergence of the expected fraction of individuals infected in a standard SIR epidemic to its asymptotic limit, Lemma~\ref{znconvlem}, which may be of independent interest. Lemma~\ref{znconvlem} is actually for a variant of the standard stochastic SIR epidemic in which the number of initial infectives follows a binomial distribution, though it is clear from the proof that a corresponding result holds if the number of initial infectives is held fixed independent of $n$.

%%%%%%%%%%%%%%%%%%%%%%%%%%%%%%%%%%%%%%%%%%%%%%
%% Single Appendix:                         %%
%%%%%%%%%%%%%%%%%%%%%%%%%%%%%%%%%%%%%%%%%%%%%%
\begin{appendix}
\section{Proof of Theorem~\lowercase{\ref{weakconvr}}}
\label{appA}

As stated in the main text, we need to show convergence of the finite-dimensional distributions and asymptotic tightness of the sequence
$$\frac{1}{\sqrt{m_n}}\left(\bRbnT-m_n \brnT\right) \qquad (n=1,2,\cdots),$$
where $\bRbnT$ and $\brnT$ are defined just before the statement of Theorem~\ref{weakconvr}.

\subsection{Finite-dimensional distributions}
We show that, as $n\to\infty$, the finite-dimensional distributions of $\frac{1}{\sqrt{m_n}}\left(\bRbnT-m_n \brnT\right)$ converge to those of $\bW_T$, using the Cram\'{e}r-Wold device \cite[e.g.][pp.~48--49]{Bill68}.
For $j=1,2,3$, let $k_j \in \mathbb{N}$ and, for $l=1,2,\cdots,k_j$, let $\lambda_{jl} \in \mathbb{R}$ and $t_{jl} \in [0,T]$.  For $n=1,2,\cdots$, let
\begin{equation*}
\rnlam =\sum_{j=1}^3 \sum_{l=1}^{k_j} \lambda_{jl} \rbnj(t_{jl})=\sum_{i=1}^{m_n}\rnlami,
\end{equation*}
where
\begin{equation}
\label{rnlambdai}
\rnlami =\sum_{j=1}^3 \sum_{l=1}^{k_j}\lambda_{jl}R_{ij}^{(n)}(t_{jl}).
\end{equation}

Note that $\rnlamone,\rnlamtwo,\cdots,\rnlammn$ are i.i.d.  Now choose $\alpha>0$ such that $\E[I^{2+\alpha}]<\infty$ and, for $n=1,2,\cdots$, let
\begin{equation}
\label{Andefn}
B_n=\sum_{i=1}^{m_n} \E\left[\left|\rnlami-\E\left[\rnlami\right]\right|^{2+\alpha}\right]=
m_n\E\left[\left|\rnlamone-\E\left[\rnlamone\right]\right|^{2+\alpha}\right]
\end{equation}
and
\begin{equation}
\label{Bnvar}
C_n^2=\sum_{i=1}^{m_n} \var\left(\rnlami\right)=
m_n\var\left(\rnlamone\right).
\end{equation}
Then, using \citet[Corollary 1.9.3]{Serf80},
\begin{equation}
\label{RnlamconvD}
\frac{\rnlam -\E\left[\rnlam\right]}{\sqrt{m_n \var\left(\rnlamone\right)}} \convD N(0,1) \qquad \mbox{as }n \to \infty
\end{equation}
if $B_n=o(C_n^{2+\alpha})$ as $n \to \infty$. We now establish this by showing that, as $n\to\infty$,  $m_n^{-1} B_n $ is bounded  and $m_n^{-1} C_n^{2}= \var(\rnlamone)$ converges to a strictly positive limit.

Firstly,
\begin{align*}
\frac{1}{m_n}B_n & = \E\left[\left|\rnlamone-\E\left[\rnlamone\right]\right|^{2+\alpha}\right]\\
& = \E\left[\left|\sum_{j=1}^3 \sum_{l=1}^{k_j}\lambda_{jl}\left(R_{1j}^{(n)}(t_{jl})-\E\left[R_{1j}^{(n)}(t_{jl})\right]\right)\right|^{2+\alpha}\right]\\
& \le \left[\sum_{j=1}^3 \sum_{l=1}^{k_j}  | \lambda_{jl} | \left\{\left(\E\left[\left(R_{1j}^{(n)}(t_{jl})\right)^{2+\alpha}\right]\right)^{\frac{1}{2+\alpha}}
+\E\left[R_{1j}^{(n)}(t_{jl})\right]\right\}\right]^{2+\alpha},
\end{align*}
by Minkowski's inequality.   Also, $0 \le R_{1j}^{(n)}(t_{jl}) \le 1$ for $j=1,2$ and
\begin{equation*}
%\label{R13bound}
0 \le R_{13}^{(n)}(t_{3l})\le \frac{1}{n}(I_1+I_2+\cdots+I_n),
\end{equation*}
so $0 \le \E\left[R_{13}^{(n)}(t_{3l})\right]\le \mu_I$  and a further application of Minkowski's inequality yields
\[
0 \le \E\left[\left(R_{13}^{(n)}(t_{jl})\right)^{2+\alpha}\right]\le
\E\left[I^{2+\alpha}\right].\]
Thus, recalling~\eqref{Andefn},
\begin{equation}
\label{Anbounds}
0 \le B_n \le m_n\left[\lambda_* k_*\left\{4+\mu_I+\left(\E\left[I^{2+\alpha}\right]\right)^{\frac{1}{2+\alpha}}\right\}\right]^{2+\alpha},
\end{equation}
where $k_*=\max(k_1,k_2,k_3)$ and $\lambda_*=\max(|\lambda_{jl}|:j=1,2,3; l=1,2,\cdots,k_j)$.

Turning to $C_n$, note that Lemma~\ref{ZAnweak} and the continuous mapping theorem imply that, writing $\bb=(b_1,b_2,b_3)^\top$ and recalling~\eqref{bbDef},
\begin{equation*}
\rnlamone \convD \sum_{j=1}^3 \sum_{l=1}^{k_1} b_j \lambda_{jl} \Psi(t_{jl}).
\end{equation*}
Application of the dominated convergence theorem and recalling~\eqref{Bnvar} then yields
\begin{equation*}
m_n^{-1} C_n^2 \to \var\left(\sum_{j=1}^3 \sum_{l=1}^{k_1} b_j \lambda_{jl} \Psi(t_{jl})\right) \in(0,\infty) \qquad \mbox{as }n \to \infty,
\end{equation*}
which, together with~\eqref{Anbounds}, shows that $B_n=o(C_n^{2+\alpha})$ as $n \to \infty$, proving~\eqref{RnlamconvD}. Hence, by the  Cram\'{e}r-Wold device, the
finite-dimensional distributions of $\frac{1}{\sqrt{m_n}}\left(\bRbnT-m_n \brnT\right)$ converge to those of $\bW_T$, since~\eqref{RnlamconvD} holds for any choice of
$k_1,k_2,k_3$ and $(\lambda_{jl},t_{jl})$ $(j=1,2,3;l=1,2,\cdots,k_j)$.

\subsection{Tightness}
We show that the sequence $\left(\bRbnT-m_n \brnT\right)/\sqrt{m_n}$ $(n=1,2,\cdots)$ is asymptotically tight.
For $T>0$ and $i=1,2,3$, let $\rbniT=\{\rbni(t):0 \le t \le T\}$ and $r^{(n)}_{iT}=\{r^{(n)}_{i}(t): 0 \le t \le T\}$.  Then, by \citet[Lemma 1.4.3]{Vaar96} it is sufficient to show that, for $i=1,2,3$, the random processes
$\left(\rbniT-m_n r^{(n)}_{iT}\right) /\sqrt{m_n}$ $(n=1,2,\cdots)$ are asymptotically tight, which we do using Theorem 2.11.9 in that book (the bracketing central limit theorem).  We give the proof for $i=3$ (i.e.\ the severity component); the proofs for $i=1,2$ are similar but simpler. We first give some more notation, then use the above-mentioned Theorem 2.11.9 to state sufficient conditions for $\left(\rbnthreeT-m_n r^{(n)}_{3T} \right)/\sqrt{m_n}$ $(n=1,2,\cdots)$ to be asymptotically tight and finally prove that those conditions are satisfied.

For $T>0$ and $f:[0,\infty) \to \mathbb{R}$, let $$\|f\|_T=\sup_{0 \le t \le T}|f(t)|.$$
For $i=1,2,\cdots,n$, let $R_{i3}^{(n)}=\{R_{i3}^{(n)}(t): t \ge 0\}$.
For $\epsilon,T>0$ and $n=1,2,\cdots$, define the bracketing number $N_{[\,]}^{(n)}(\epsilon,T)$ as the minimal number of sets $N_{\epsilon}$ in a partition
$[0,T]=\cup_{j=1}^{N_{\epsilon}} \mathcal{T}^{(n)}_{\epsilon j}$ of $[0,T]$ into sets $\mathcal{T}^{(n)}_{\epsilon j}$ such that, for every
partitioning set $\mathcal{T}^{(n)}_{\epsilon j}$,
\begin{equation}
\label{partcrit}
\sum_{i=1}^{m_n}\E\left[\max_{t,s \in \mathcal{T}^{(n)}_{\epsilon j}}\left|\frac{1}{\sqrt{m_n}}\left(R_{i3}^{(n)}(t)-R_{i3}^{(n)}(s)\right)\right|^2\right] \le \epsilon^2.
\end{equation}
The inequality~\eqref{partcrit} is expressed in that form to make the connection with \citet[Section~2.11.2]{Vaar96} clearer; but we note that, since $R_{13}^{(n)}, R_{23}^{(n)}, \cdots , R_{m_n3}^{(n)}$ are identically distributed,
\begin{equation*}
\sum_{i=1}^{m_n}\E\left[\max_{t,s\in \mathcal{T}^{(n)}_{\epsilon j}}\left|\frac{1}{\sqrt{m_n}}\left(R_{i3}^{(n)}(t)-R_{i3}^{(n)}(s)\right)\right|^2\right]
=
\E\left[ \max_{t,s\in \mathcal{T}^{(n)}_{\epsilon j}} \left(R_{13}^{(n)}(t)-R_{13}^{(n)}(s)\right)^2 \right].
\end{equation*}

By \citet[Theorem 2.11.9]{Vaar96}, the sequence $\left(\rbnthreeT-m_n r^{(n)}_{3T}\right) /\sqrt{m_n}$ $(n=1,2,\cdots)$ is asymptotically tight if

\begin{description}
\item(i)\ \ for any $d>0$,
\begin{equation*}
\label{vdVWcond1}
\sum_{i=1}^{m_n} \E \left[\left\|\frac{1}{\sqrt{m_n}}R_{i3}^{(n)}\right\|_T 1_{\left\{\left\|\frac{1}{\sqrt{m_n}}R_{i3}^{(n)}\right\|_T>d \right\}}\right]\to 0 \qquad\mbox{as } n \to \infty;
\end{equation*}
\item(ii)\ \ for any sequence $(\delta_n)$ of real numbers satisfying $\delta_n \downarrow 0$ as $n \to \infty$,
\begin{equation*}
\label{vdVWcond2}
\sup_{s,t \in [0,T]:|s-t|<\delta_n}\sum_{i=1}^{m_n} \E\left[\left\{\frac{1}{\sqrt{m_n}}\left(R_{i3}^{(n)}(t)-R_{i3}^{(n)}(s)\right)\right\}^2\right] \to 0
\end{equation*}
as $n\to\infty$; and
\item(iii)\ \ for any sequence $(\delta_n)$ as above,
\begin{equation*}
\label{vdVWcond3}
\int_0^{\delta_n}\sqrt{\log N_{[\,]}^{(n)}(\epsilon,T)}  \,{\rm d}\epsilon \to 0 \qquad \mbox{as } n \to \infty.
\end{equation*}
\end{description}

Considering condition (i),
recall that $$R_{i3}^{(n)}(t) \stlt n^{-1}\sum_{j=1}^n I_j \qquad (i=1,2,\cdots,n; 0 \le t \le T)$$ and let $X=
\frac{1}{\sqrt{m_n}}\frac{1}{n}\sum_{j=1}^n I_j$.  Then, for $d>0$,
\begin{equation}
\label{Cond1eq1}
\sum_{i=1}^{m_n} \E \left[\left\|\frac{1}{\sqrt{m_n}}R_{i3}^{(n)}\right\|_T 1_{\left\{\left\|\frac{1}{\sqrt{m_n}}R_{i3}^{(n)}\right\|_T>d \right\}}\right]
\le
m_n \E\left[X1_{\{X > d\}}\right].
\end{equation}
Recall that there exists $\alpha>0$ such that $\E[I^{2+\alpha}]<\infty$.  Using the same method as a proof of Markov's inequality,
%\annote{}{Wording and display changed as per PT/FB suggestion. Just say `It then follows that' or `We than have' ?}
\begin{align}
\E\left[X1_{\{X > d\}}\right]   & =  d^{-(1+\alpha)}\ \E\left[X\left(1_{\{X > d\}} d^{(1+\alpha)}\right)\right]
 \nonumber \\
&  \le d^{-(1+\alpha)}\E\left[X^{2+\alpha}\right] = d^{-(1+\alpha)}\frac{1}{m_n^{1+\frac{\alpha}{2}}}\frac{1}{n^{2+\alpha}}\E\left[\left(\sum_{j=1}^n I_j\right)^{2+\alpha}\right]. \label{Cond1eq2}
\end{align}
 %By Markov's inequality,
 %\begin{align}
%\E\left[X1_{\{X > d\}}\right] & \le d^{-(1+\alpha)}\E\left[X^{2+\alpha}\right]\nonumber \\
%&= d^{-(1+\alpha)}\frac{1}{m_n^{1+\frac{\alpha}{2}}}\frac{1}{n^{2+\alpha}}\E\left[\left(\sum_{j=1}^n I_j\right)^{2+\alpha}\right]. \label{Cond1eq2}
%\end{align}
Further, by Minkowski's inequality,
\begin{equation*}
\E\left[\left(\sum_{j=1}^n I_j\right)^{2+\alpha}\right]\le n^{2+\alpha} \E\left[I^{2+\alpha}\right],
\end{equation*}
which, together with~\eqref{Cond1eq1} and~\eqref{Cond1eq2}, yields
\begin{equation*}
\sum_{i=1}^{m_n} \E \left[\left\|\frac{1}{\sqrt{m_n}}R_{i3}^{(n)}\right\|_T 1_{\left\{\left\|\frac{1}{\sqrt{m_n}}R_{i3}^{(n)}\right\|_T>d \right\}}\right]
\le m_n^{-\frac{\alpha}{2}}d^{-(1+\alpha)}\E\left[I^{2+\alpha}\right]\to 0
\end{equation*}
as $n\to\infty$, so condition (i) is satisfied.

Turning to condition (ii), recall the Poisson process  $\eta^{(n)}$  used in the construction of $A^{(n)}(\cdot)$ and for any set $F \subset [0,\infty)$ let
$\eta^{(n)}(F)$ denote the number of points of $\eta^{(n)}$ in $F$.  Then,
for any $0 \le a < b < \infty$,
\begin{equation*}
\max_{a \le s < t < b}\left|R_{13}^{(n)}(t)-R_{13}^{(n)}(s)\right| \stlt  1_{\{\eta^{(n)}([a,b)) \ge 1\}} \frac{1}{n}\sum_{j=1}^n I_j.
\end{equation*}
Thus
\begin{align*}
\frac{1}{m_n}\E\left[\max_{a \le s < t < b}\left|R_{13}^{(n)}(t)-\right.\right.\left.\left.R_{13}^{(n)}(s)\right|^2\right] &\le
\left(1-\e^{-\bgn n^2m_n(b-a)}\right) \frac{1}{m_n}\left(\mu_I^2+\frac{\sigma_I^2}{n}\right)\\
%&\le\e^{-\bgn n^2m_n a}\left(1-\e^{-\bgn n^2m_n (b-a)}\right)\frac{1}{m_n}\left(\mu_I^2+\frac{\sigma_I^2}{n}\right)\\
&\le \bgn n^2 (b-a) (\mu_I^2+\sigma_I^2),
\end{align*}
so, since $R_{13}^{(n)}, R_{23}^{(n)}, \cdots , R_{m_n3}^{(n)}$ are identically distributed,
%\annote{}{changed wording as PT suggested (26/5) (from just `for all sufficiently large $n$')}
there exists  $n_0 \in \mathbb{N}$ independent of $a$ and $b$ such that for all $n>n_0$,
\begin{align}
\label{moment2bound}
\sum_{i=1}^{m_n}\E\left[\max_{a \le s < t < b}\left|\frac{1}{\sqrt{m_n}}\left(R_{i3}^{(n)}(t)-R_{i3}^{(n)}(s)\right)\right|^2\right]
&\le \bgn n^2 m_n (b-a) (\mu_I^2+\sigma_I^2)\nonumber\\
&\le 2\lg (b-a) (\mu_I^2+\sigma_I^2),
\end{align}
%for all sufficiently large $n$,
since $ \bgn n^2 m_n \to \lg$ as $n \to \infty$.  It follows that condition (ii) is satisfied.

Turning finally to condition (iii), it follows from~\eqref{moment2bound} that~\eqref{partcrit} is satisfied for all sufficiently large $n$ if $[0,T]$ is partitioned into intervals of length $\epsilon^2/[2 \lg(\mu_I^2 + \sigma_I^2)]$ (the final interval may have a shorter length),
so $N_{[\,]}^{(n)}(\epsilon,T) \le c/\epsilon^2$ for all sufficiently large $n$, where $c=4T\lg (\mu_I^2 + \sigma_I^2)$.  Thus
\begin{equation*}
\limsup_{n \to \infty} \int_0^{\delta_n}\sqrt{\log N_{[\,]}^{(n)}(\epsilon,T)}  \,{\rm d}\epsilon
\le \limsup_{n \to \infty} J_n,
\end{equation*}
where $J_n= \int_0^{\delta_n} \sqrt{\log(c/\epsilon^2)} \,{\rm d}\epsilon$.  Substituting $u=\log(c/\epsilon^2)$ yields after a little algebra that $J_n \to 0$ as $n \to \infty$, so condition (iii) is satisfied.

\section{Proof of Lemma~\lowercase{\ref{pn_znhat_convlemma}}}
\label{appB}
In this appendix we first prove~\eqref{pnconv} and~\eqref{pnconvlog} in Section~\ref{appB1}. Then in Section~\ref{sec:suscSets} we introduce the notion of susceptibility sets and some results about them which are subsequently used in Section~\ref{sec:B3} to prove~\eqref{znhatconv}. Recall that $R_0= \mu_I \lambda_W >1$.

\subsection{Proof of~\eqref{pnconv} and~\eqref{pnconvlog}}
\label{appB1}
Recall that the first assertion we set out to prove here is that, under (C1)--(C3), for any $\gamma\in(0,\delta/4)$ and $T>0$ we have $$\lim_{n \to \infty} \sqrt{m_n} \sup_{0 \le t \le T}\left|\png (t)-p(t)\right|=0,$$ where $\png (t) = \P\left(\Zn (t) \ge \ngam \right)$ and $p(t)=1-\e^{-\lambda_G(1-\pi_W)t}$.

We first introduce some more notation.
For $\lg$, $\lw>0$, let $\BB(\lg, \lw I)$ denote a Galton-Watson branching process having a random number of ancestors distributed as $\Po(\lg)$ and offspring distribution $\Po(\lw I)$.
Let $B(\lg, \lw I)$ denote the total size of $\BB(\lg, \lw I)$, including the ancestors.  For $n=1,2,\cdots$, let $\chin_1, \chin_2,\cdots$ be i.i.d.\ random variables that are uniformly distributed on the integers $1,2,\cdots,n$.   Then, cf.~\cite{Ball95} and Section~\ref{early}, for any fixed $t \ge 0$, a realisation of $\Zn(t)$ can be obtained from $\BB( n^2 m_n \bgn t, n \bwn I)$ as follows.

Let $\BBn(t)=\BB( n^2 m_n \bgn t, n \bwn  I)$, $\Bn(t)=B(n^2 m_n \bgn t, n \bwn I)$ and recall from Section \ref{embedding} that $\Zn(t)$ has the same distribution as the size of the epidemic $\En(t)$ defined in Definition~\ref{defEpsnt}.
Label the individuals in the population upon which $\En(t)$ is defined $1,2,\cdots,n$.  Births in $\BBn(t)$ correspond to infectious contacts in $\En(t)$. For $k=1,2,\cdots$, the individual contacted at the $k$-th birth in $\BBn(t)$ (including the initial infectives) has label given by $\chin_k$.  If that individual is still susceptible in $\En(t)$ then they become infected and make infectious contacts according to the offspring of the corresponding individual in $\BBn(t)$.  If that individual is no longer susceptible in $\En(t)$ then they and all of the progeny of the corresponding individual in $\BBn(t)$ are ignored in the construction of $\En(t)$.

Let $\Mn=\min\{k\ge 2:\chin_k \in \{\chin_1,\chin_2,\cdots,\chin_{k-1}\}\}$ be the position of the first duplicate in $\{\chin_1,\chin_2,\cdots\}$ and fix $\gamma\in(0,\delta/4)$.  Now,
\begin{equation*}
\P\left(\Zn(t) <\ngam \right)=\P\left(\Zn(t) <\ngam, \Mn <\ngam\right)+ \P\left(\Zn(t) <\ngam, \Mn \ge \ngam\right)
\end{equation*}
and
\begin{equation*}
\P\left(\Bn(t) <\ngam \right)=\P\left(\Bn(t) <\ngam, \Mn <\ngam\right)+ \P\left(\Bn(t) <\ngam, \Mn \ge \ngam\right).
\end{equation*}
By the coupling, $\P\left(\Zn(t) <\ngam, \Mn \ge \ngam\right)=\P\left(\Bn(t) <\ngam, \Mn \ge \ngam\right)$, so
\begin{align}
\label{birthday}
 & \left| \P\left(\Zn(t) \ge \ngam \right) - \P\left(\Bn(t) \ge \ngam \right) \right| \nonumber\\
&\quad=\left| \P\left(\Zn(t) <\ngam \right)-\P\left(\Bn(t) <\ngam \right) \right| \nonumber\\
&\quad=\left| \P\left(\Zn(t) <\ngam, \Mn <\ngam\right)-\P\left(\Bn(t) <\ngam, \Mn <\ngam\right)   \right| \nonumber\\
&\quad\le\P\left(\Mn <\ngam\right)
=1-\prod_{k=1}^{\ceil{\ngam}-1} \left(1-\frac{k}{n}\right) \le \sum_{k=1}^{\ceil{\ngam}-1} \frac{k}{n} =  \frac{\ceil{\ngam}(\ceil{\ngam}-1)}{2n} . %\le \frac{\ngam(\ngam+1)}{2n}.
\end{align}
This bound is independent of $t$, so
\begin{equation}
\label{limit0}
\lim_{n \to \infty} \sqrt{m_n}\sup_{0 \le t \le T}\left|\png(t)-\P\left(\Bn(t) \ge \ngam\right)\right|=0,
\end{equation}
since $m_n n^{4\gamma-2} \to 0$ as $n \to \infty$, by condition (C1) as $\gamma < \frac{\delta}{4}$.
%\annote{}{FB suggested addition of ``by condition (C1) as ...''}

Recall from~\eqref{infectionrates} that $\bwn n \to \lw$ and $\bgn n^2 m_n \to \lg$ as $n\to\infty$.
For $t \ge 0$, let $\BB(t)=\BB(\lg t, \lw I)$ and $B(t)=B(\lg t, \lw I)$, and note that $p(t)=\P\left(B(t)=\infty\right)$.  Then
\begin{align}
\left|\P\left(\Bn(t) \ge \ngam\right)-p(t)\right| \le & \left|\P\left(\Bn(t) \ge \ngam\right)-\P\left(\Bn(t) = \infty\right)\right|\nonumber\\
&\qquad+\left|\P\left(\Bn(t) = \infty\right)-p(t)\right|.
%& \left|\P\left(\Bn(t) \ge \ngam\right)-p(t)\right| \nonumber \\
% & \quad \leq \left|\P\left(\Bn(t) \ge \ngam\right)-\P\left(\Bn(t) = \infty\right)\right| +\left|\P\left(\Bn(t) = \infty\right)-p(t)\right|.
\label{bound2}
 \end{align}
We consider the two terms on the right-hand side of~\eqref{bound2} separately, starting with the second one.

Recall that $\BB(\lw I)$ denotes the Galton-Watson branching process with one ancestor and offspring distribution $\Po(\lw I)$, so $\pi_W$ is the extinction probability of $\BB(\lw I)$.  Let $\pi_n$ denote the extinction probability of $\BB(n\bwn I)$.  Then $\P\left(\Bn(t) = \infty\right)=1-\e^{-\bgn n^2 m_n(1-\pi_n)t}$, so
\begin{align}
\label{bound3}
\Big|\P\left(\Bn(t) = \infty\right)&-p(t)\Big| = \left|\e^{-\bgn n^2 m_n(1-\pi_n)t}-\e^{-\lg(1-\pi_W)t}\right| \nonumber \\
&=\e^{-\bgn n^2 m_n(1-\pi_n)t} \left|1- \e^{\left[\bgn n^2 m_n(1-\pi_n)-\lg(1-\pi_W)\right]t}\right| \nonumber \\
&\le \left|1- \e^{\left[\bgn n^2 m_n(1-\pi_n)-\lg(1-\pi_W)\right]t}\right| \nonumber \\
&\le\left|\bgn n^2 m_n(1-\pi_n)-\lg(1-\pi_W)\right| \nonumber \\
 & \qquad \times t\max\left\{1,\e^{\left[\bgn n^2 m_n(1-\pi_n)-\lg(1-\pi_W)\right]t}\right\},
\end{align}
since $\left|1-\e^x\right| \le |x|\max\left(1,\e^x\right)$ for all $x \in \mathbb{R}$.
Now $n\bwn \to \lw$ as $n \to \infty$, so $\pi_n \to \pi_W$ as $n \to \infty$.  Thus,
\begin{equation*}
\lim_{n \to \infty}\sup_{0 \le t \le T} t \, \max \left\{1,\e^{\left[\bgn n^2 m_n(1-\pi_n)-\lg(1-\pi_W)\right]t}\right\}=T,
\end{equation*}
since $n^2 m_n \bgn \to \lg$ as $n \to \infty$.
Further, since
\begin{align*}
&\left|\bgn n^2 m_n(1-\pi_n)-\lg(1-\pi_W)\right|\\
& \qquad \qquad \le \left|\bgn n^2 m_n(1-\pi_n)-\lg(1-\pi_n)\right|
+\left|\lg(1-\pi_n)-\lg(1-\pi_W)\right|\\
& \qquad \qquad \le\left|\bgn n^2 m_n-\lg\right|+\lg\left|\pi_n-\pi_W\right|,
\end{align*}
it follows from~\eqref{bound3} and condition (C2) that, for any $T>0$,
\begin{equation}
\label{limit1}
\lim_{n \to \infty}\sup_{0 \le t \le T}\sqrt{m_n}\left|\P\left(\Bn(t) = \infty\right)-p(t)\right|=0,
\end{equation}
provided $\sqrt{m_n}(\pi_n-\pi_W)\to 0$ as $n \to \infty$, which we show now.

Let $\pi(\lambda)$ denote the extinction probability of
$\BB(\lambda I)$, so $\pi_n=\pi(n \bwn)$ and $\pi_W=\pi(\lambda_W)$.  Further, $\pi(\lambda)$ is given by the smallest solution of $\phi_I(\lambda(1-s))=s$, where $\phi_I$ is the Laplace transform of $I$.  Suppose that $\lambda\mu_I>1$, so
$\pi(\lambda)\in (0,1)$.  Then implicit differentiation shows that the derivative of $\pi$ at $\lambda$ is given by
\begin{equation*}
\pi'(\lambda)=\frac{(1-\pi(\lambda))\phi_I'(\lambda(1-\pi(\lambda)))}{\lambda\phi_I'(\lambda(1-\pi(\lambda)))+1}.
\end{equation*}
Further, $-\lambda\phi_I'(\lambda(1-\pi(\lambda)))<1$ as $-\lambda\phi_I'(\lambda(1-s))$ is increasing with $s\in[0,1]$ and $\pi(\lambda)$ is the unique solution
of $\phi_I(\lambda(1-s))=s$ in $(0,1)$.  Now $n\bwn \to \lw$ as $n \to \infty$, so $n\bwn \mu_I>1$ for all sufficiently large $n$ as $R_0=\lambda_W \mu_I>1$. Also, $\phi_I'$ is continuous on $(0,\infty)$, so
application of the mean value theorem and using condition (C3) shows that $\sqrt{m_n}(\pi_n-\pi_W)\to 0$ as $n \to \infty$, thus proving~\eqref{limit1}.

Turning to the first term on the right-hand side of~\eqref{bound2}, note that
\begin{equation}
\label{equation1}
\P\left(\Bn(t) \ge \ngam\right)-\P\left(\Bn(t) = \infty\right)=\P\left(\Bn(t) \in [\ngam,\infty) \right)
\end{equation}
and
\begin{equation}
\label{equation1a}
\P\left(\Bn(t) \in [\ngam,\infty) \right)
=\P\left(\Bn(t)<\infty \right) \P\left(\Bn(t) \in [\ngam,\infty) \mid \Bn(t)<\infty \right).
\end{equation}
Let $\Yn_0(t)$ be the number of ancestors in $\BBn(t)$, so $\Yn_0(t) \sim \Po(n^2 m_n \bgn t)$ and $\left(\Yn_0(t)\mid \Bn(t)<\infty\right) \sim \Po(n^2 m_n \bgn \pi_n t)$.  To show the latter, let $Y_0$ be the number of ancestors in $\BB(\lg, \lw I)$.  Then, $\P\left(B(\lg, \lw I)<\infty \mid Y_0=k\right)= \pi_W^k$ $(k=0,1,\cdots)$.  Hence, for $k=0,1,\cdots$,
\[
\P\left(Y_0=k \mid B(\lg, \lw I)<\infty \right) \propto \P(Y_0=k) \P\left(B(\lg, \lw I)<\infty \mid Y_0=k \right)=\frac{\lg^k \e^{-\lg}\pi_W^k}{k!},
\]
whence $\left(Y_0 \mid B(\lg, \lw I)<\infty\right) \sim \Po(\lg\pi_W)$.
Let $\BBn=\BB(n\bwn I)$ and $\Bn$ be the total size of $\BBn$, including the ancestor.  Thus $\BBn$ describes the process evolving from a typical ancestor in $\BBn(t)$.

Recall that $R_0=\lw \mu_I>1$ so, as indicated above, $\BBn$ is supercritical for all sufficiently large $n$, say $n \ge n_0$.  Let $f_n(s)=\phi_I(n\bwn(1-s))$ be the offspring PGF of $\BBn$.  Then, for $n \ge n_0$, conditional upon extinction $\BBn$ is distributed as a subcritical Galton-Watson process, $\BBnhat$ say, having offspring PGF $\hat{f}_n(s)=f_n(s\pi_n)/\pi_n$; see e.g.~\cite{Daly79}.  Let $\Bnhat$ denote the total size of
$\BBnhat$ and $\hat{h}_n(s)$ be the PGF of $\Bnhat$.  Then $\hat{h}_n(s)$ is given by the smallest solution of $\hat{h}_n(s)=s\hat{f}_n(\hat{h}_n(s))$ in
$[0,\infty)$ (see \citet[p.~39]{Jage75}.  Now
\[
\lim_{n \to \infty} f_n(s) = f(s)=\phi_I(\lw(1-s)) \quad (0 \le s \le 1),
\]
whence
\[
\lim_{n \to \infty} \hat{f}_n(s) = \hat{f}(s)=f(s \pi_W)/\pi_W \quad (0 \le s \le \pi_W^{-1}).
\]
Recall that $s_0=\max\{s>1:h=s\hat{f}(h)\mbox{ for some } h \in (0,\infty)\}$; see just before the statement of Theorem~\ref{multifinalmain1}
in Section~\ref{sec:divergingm}.
For any $s \in [0, s_0)$, $\hat{h}_n(s) \to \hat{h}(s)$ as $n \to \infty$,
where $\hat{h}(s)$ is given by the smallest solution of $\hat{h}(s)=s\hat{f}(\hat{h}(s))$ in
$[0,\infty)$. Note that $\hat{h}$ is the PGF of the total size of $\BB(\lw I)$ conditioned on extinction.

Recalling that $\left(\Yn_0(t)\mid \Bn(t)<\infty\right) \sim \Po(n^2 m_n \bgn \pi_n t)$,
\begin{equation*}
\E\left[s^{\Bn(t)}\mid \Bn(t)<\infty \right] = \e^{-n^2 m_n \bgn \pi_n t(1-\hat{h}_n(s))},
\end{equation*}
so Markov's inequality yields
\begin{equation*}
\P \left( \Bn(t) \in [\ngam,\infty) \mid \Bn(t) < \infty\right) \le s^{-\ngam} \, \e^{-n^2 m_n \bgn \pi_n t (1-\hat{h}_n(s))}
\end{equation*}
%\annote{}{no longer `for all suitable $s$'. DS: Is in fact true for all $s\geq0$ as is just an application of Markov's inequality? That we are interested in $s>1$ is what makes the next argument a bit more involved than if $0<s<1$; but at this point we are ok for all $s$ I am pretty sure.}
for all $s>1$. Now choose $\hat{s}_0 \in (1, s_0)$ and let $\hat{\theta}_0=\log \hat{s}_0>0$.
Noting that $n^2 m_n \bgn \to \lambda_G$, $\pi_n\to\pi_W$ and $\hat{h}_n(\hat{s}_0) \to \hat{h}(\hat{s}_0)$ as $n \to \infty$, we have that for any $\epsilon>0$ and all $t\in[0,T]$
\begin{equation}
\label{Markovineq}
\P \left( \Bn(t) \in [\ngam,\infty) \mid \Bn(t) < \infty\right) \le \e^{-\hat{\theta}_0 \ngam} \, \e^{-\lambda_G \pi_W t (1-\hat{h}(\hat{s}_0))+\epsilon}
\end{equation}
for all sufficiently large $n$.
Hence,
\[
\lim_{n \to \infty} \sqrt{m_n} \sup_{0 \le t \le T} \left|\P\left(\Bn(t)\in[\ngam,\infty) \mid \Bn(t) < \infty\right)\right|=0,
\]
since $\hat{\theta}_0>0$ and $\gamma>0$.  Thus, using~\eqref{equation1} and~\eqref{equation1a},
\begin{equation}
\label{limit3}
\lim_{n \to \infty} \sqrt{m_n} \sup_{0 \le t \le T} \left|\P\left(\Bn(t) \ge \ngam\right)-\P\left(\Bn(t) = \infty\right)\right|=0
\end{equation}
for every $T>0$.
Equation~\eqref{pnconv} now follows using~\eqref{limit0}, \eqref{bound2}, \eqref{limit1} and~\eqref{limit3}.

The proof of~\eqref{pnconvlog} parallels that of~\eqref{pnconv}.  Arguing as in the derivation of~\eqref{limit0} shows that
\begin{equation}
\label{limit0a}
\lim_{n \to \infty} \sqrt{m_n}\sup_{0 \le t \le T}\left|\P\left(\Zn(t) \ge \log n\right)-\P\left(\Bn(t) \ge \log n\right)\right|=0.
\end{equation}
Now take $\theta'$ as in condition (C5) and let $\hat{s}_0=\exp(\theta'/2)$, so we have $\hat{\theta}_0 = \log \hat{s}_0 = \frac{\theta'}{2}$.
A similar application of Markov's inequality to~\eqref{Markovineq} yields that, for all $t \in [0,T]$ and any $\epsilon>0$,
\begin{equation*}
\P\left(\Bn(t)\in[\log n,\infty) \mid \Bn(t) < \infty\right) \le n^{-\hat{\theta}_0} \e^{-\lambda_G \pi t(1-\hat{h}(\hat{s}_0))+\epsilon}
\end{equation*}
for sufficiently large $n$. Now $\lim_{n \to \infty} \sqrt{m_n} n^{-\hat{\theta}_0}=0$, since $\lim_{n \to \infty} m_n n^{-\theta'}=0$, so arguing as in the derivation of~\eqref{limit3} yields
\begin{equation*}
\lim_{n \to \infty} \sqrt{m_n} \sup_{0 \le t \le T} \left|\P\left(\Bn(t) \ge \log n\right)-\P\left(\Bn(t) = \infty\right)\right|=0
\end{equation*}
for every $T>0$, which together with~\eqref{limit1} and~\eqref{limit0a} implies~\eqref{pnconvlog}.

\subsection{Susceptibility sets}
\label{sec:suscSets}
Before proving \eqref{znhatconv}, we describe the concept of a susceptibility set (see e.g.~\cite{Ball02,Ball09}) and derive some associated bounds that are used in the proof.  The notation is local to this description.  Consider the standard single-population SIR epidemic model of Section~\ref{sec:singleComm} but suppose that initially there are $a$ infectives and $N$ susceptibles, the infectious period is distributed according to the random variable $I$ and the individual-to-individual infection rate is $\beta$.  Label the initial infectives, $-(a-1),-(a-2),\cdots,0$ and the initial susceptibles $1,2,\cdots,N$.  Let $\mathcal{G}$ denote the random directed graph on $\mathcal{N}=\{-(a-1), -(a-2),\cdots,N\}$ in which for any distinct $i,j \in \mathcal{N}$ there is a directed edge from $i$ to $j$ if and only if $i$, if infected, makes infectious contact with $j$.  More specifically, the graph $\mathcal{G}$ is constructed by for each $i \in \mathcal{N}$ first sampling its
infectious period, $I_i$ say, and then for each $j \in \mathcal{N} \setminus \{i\}$ drawing a directed edge from $i$ to $j$ independently with probability $1-\e^{-\beta I_i}$.

For distinct $i, j \in \mathcal{N}$, write $i \rightsquigarrow j$ if and only if there is a chain of directed edges from $i$ to $j$ in $\mathcal{G}$.
Let $\mathcal{I}_0=\{-(a-1), -(a-2),\cdots,0\}$ denote the set of initial infectives.  For $i=1,2,\cdots, N$, define the susceptiblity set of individual $i$ by
$\mathcal{S}_i=\{j \in \mathcal{N} \setminus \{i\}:j \rightsquigarrow i\}$ and let $S_i=|\mathcal{S}_i|$ denote the cardinality of $\mathcal{S}_i$.  Note that
susceptible $i$ is infected by the epidemic if and only if $\mathcal{S}_i \cap \mathcal{I}_0 \neq \emptyset$ and, by exchangeability,
\begin{equation*}
\P\left(i \mbox{ avoids infection from epidemic} \mid  S_i=k  \right)=\theta_{N,a}(k)\qquad (k=0,1,\cdots,N),
\end{equation*}
where $\theta_{N,a}(0)=1$, $\theta_{N,a}(k)=0$ if $k=N,N+1,\cdots,N+a-1$, and
\begin{equation}
\label{thetaNak}
\theta_{N,a}(k)=\prod _{j=1}^k \left( \frac{N-j}{N+a-j} \right) \quad (k=1,2,\cdots,N-1).
\end{equation}
It follows that
\begin{equation*}
\P\left(i \mbox{ avoids infection from epidemic}\right)=\sum_{k=0}^{N-1} \P(S_i=k) \theta_{N,a}(k).
\end{equation*}
Note that $\theta_{N,a}(k)$ decreases with $k$ so, for $k=1,2,\cdots,N-1$,
\begin{align}
\theta_{N,a}(k)\P(S_i\le k) & \le \P\left(i \mbox{ avoids infection from epidemic}\right) \nonumber \\
 & \le \P(S_i\le k)+\theta_{N,a}(k+1). \label{pavoidbounds}
\end{align}

\subsection{Proof of~\eqref{znhatconv}}
\label{sec:B3}
Recall that we are setting out to prove that, under (C1)--(C4), for any $\gamma\in(0,\delta/4)$ and $T>0$ we have $$\lim_{n \to \infty} \sqrt{m_n} \sup_{0 \le t \le T}\left|\hatzng (t)- \zinf \right|=0,$$ where $\hatzng (t) = \E\left[\barzn(t) \mid \Zn (t) \ge \ngam\right]$.
The proof is structured as follows. First, in Section~\ref{sec:qingamt}, we reduce the problem to finding bounds $\qLgam$ and $\qUgam$ which satisfy respectively \eqref{lowerlim} and \eqref{upperlim} below, for a key quantity $\qingamt$ that can be loosely interpreted as the probability that an individual who is susceptible early in a large outbreak remains susceptible at the end of the outbreak. Then in each of sections~\ref{sec:qL} and~\ref{sec:qU} we bound first the number of infectives early in a large outbreak and second the size of the susceptibility set of an individual who is susceptible after the early part of a large outbreak, and thus bound $\qingamt$.

\subsubsection{The quantity $\qingamt$}
\label{sec:qingamt}
Fix $\gamma \in (0, \frac{\delta}{4})$ and $T>0$, and note that for any $t \in [0,T]$
\begin{align*}
\hatzng (t) & = \frac{1}{n}\left[\ceil{\ngam}+\left(n-\ceil{\ngam}\right)\left(1-\qingamt\right)\right]\\
& = 1-\frac{\left(n-\ceil{\ngam}\right)}{n}\qingamt,
\end{align*}
where $\qingamt$ is the conditional probability, given that the epidemic $\En(t)$ has infected at least $\ceil{\ngam}$ individuals, that any given individual who is susceptible when the cumulative number of infectives in $\En(t)$ first reaches $\ceil{\ngam}$ is still susceptible at the end of the epidemic. For $\lambda>0$, let $\pipo(\lambda)$ denote the extinction probability of the branching process $\BB(\lambda)$; so $\pipo(\lambda) = -\lambda^{-1} W_0(-\lambda \e^{-\lambda})$, where $W_0(\cdot)$ is the principal branch of the Lambert W function.  Note that $\zinf=1-\pipo(\lw \mu_I)$.  Then
\[
\hatzng (t)- \zinf =\pipo(\lw \mu_I)-\qingamt+\frac{\ceil{\ngam}}{n}\qingamt,
\]
so
\begin{equation*}
\sqrt{m_n}\sup_{0 \le t \le T}\left|\hatzng (t)- \zinf \right| \le \sqrt{m_n}\sup_{0 \le t \le T}\left|\pipo(\lw \mu_I)-\qingamt \right|
+\sqrt{m_n}\frac{\ceil{\ngam}}{n}.
\end{equation*}
Now $\sqrt{m_n}\frac{\ceil{\ngam}}{n} \to 0$ as $n \to \infty$, since $\gamma<\frac{\delta}{4}$ and $\delta < 2$, so to prove~\eqref{znhatconv} it is sufficient to obtain lower and upper bounds, $\qLgam$ and $\qUgam$, for $\qingamt$ $(0 \le t \le T)$ that satisfy
\begin{equation}
\label{lowerlim}
\lim_{n \to \infty}\sqrt{m_n}\left|\pipo(\lw \mu_I)-\qLgam\right|=0
\end{equation}
and
\begin{equation}
\label{upperlim}
\lim_{n \to \infty}\sqrt{m_n}\left|\pipo(\lw \mu_I)-\qUgam\right|=0.
\end{equation}

In the following two sections we establish bounds for $\Ynhat(\ngam,t)$, the number of infectives in $\En(t)$ when the cumulative number of infectives reaches $n^\gamma$, and $S^{(n)}_i$, the size of the susceptibility set of an individual who is suscepbible at the time the cumulative number of infections reaches $n^\gamma$, amongst the individuals who are susceptible and infectious at that time. Since $\qingamt$ is an infection \emph{avoidance} probability, upper bounds for $\Ynhat(\ngam,t)$ and $S^{(n)}_i$ imply a lower bound for $\qingamt$, and vice-versa.

\subsubsection{The lower bound $\qLgam$}
\label{sec:qL}

First note that $\ceil{n^\gamma}$ is a trivial upper bound for $\Ynhat(\ngam,t)$.

Suppose that we stop the construction of $\En(t)$ when the cumulative number of infectives first reaches $\ngam$.  (If the epidemic $\En(t)$ infects fewer than $\ngam$ individuals, the construction is repeated independently until an epidemic that infects at least $\ngam$ individuals is obtained.)  Choose an individual, $i$ say, that is still susceptible and consider its susceptibility set, $\SSn_i$  having size $\Sn_i$, among the remaining $n-\ceil{\ngam}-1$ susceptibles and the $\Ynhat(\ngam,t)$ current infectives, so in the notation of Section~\ref{sec:suscSets} used to describe susceptibility sets, $N=n-\ceil{\ngam}$ and $a=\Ynhat(\ngam,t)$.
Each of the other $N-1$ remaining susceptibles in the population, if infected, makes infectious contact with individual $i$ independently and with probability $1-\phi_I(\bgn)$.
It follows that the indicator function of the event that a given individual contacts individual $i$ is stochastically smaller than a $\Po\left(-\kappa_I(\bgn)\right)$ random variable, where $\kappa_I(\theta)=\log \phi_I(\theta)$. Thus the total number of individuals that contact individual $i$ (and hence join  $\SSn_i$) is stochastically smaller than a $\Po\left(-(N-1)\kappa_I(\bgn)\right)$ random variable.  (Note that $\kappa_I(-\theta)$ is the cumulant-generating function of~$I$.)

The susceptibility set $\SSn_i$ can be constructed on a generation basis in the obvious fashion, with individual $i$ comprising generation 0 and for any other individual, $j$ say, in $\SSn_i$, its generation is given by the number of edges in the shortest directed path from $j$ to $i$ in $\SSn_i$.
Note however that, unless the infectious period $I$ is constant, the probability an individual joins the susceptibility set is different for different generations.
Fix two individuals, $j_1$ and $j_2$, let $q_0^{(n)}$ be the unconditional probability that $j_1$ fails to infect $j_2$  and, for $k=1,2,\cdots$, let $q_k^{(n)}$ be the conditional probability that $j_1$ fails to infect $j_2$ given that $j_1$ has failed to infect $k$ other individuals.  Then $q_k^{(n)}=\phi_I\left((k+1)\bwn\right)/\phi_I\left(k\bwn\right)$ $(k=0,1,\cdots)$ and since $\kappa_I$ is convex it is readily seen that $q_0^{(n)} \le q_1^{(n)} \le \cdots$, as one would expect on intuitive grounds.  Thus, as $N-1 \le n$, it follows that $\Sn_i \stlt \Vn$, where $\Vn$ is the total progeny of the Galton-Watson branching process $\BB(\lambda_n)$ and $\lambda_n=-n\kappa_I(\bwn)$. Note that $\Vn$ and $\Sn_i$ do not include the initial individual $i$.

Choose $\gamma_1>0$ such that $2(\gamma+\gamma_1)<\delta$, where $\delta \in (0, 2)$ is as in condition (C1).   Recall that, for any $t \in [0,T]$, the number of current infectives $\Ynhat(\ngam,t)$ is at most
$\ceil{\ngam}$.  Thus, from~\eqref{pavoidbounds}, $\qLgam=\theta_{N,a}(\ceil{\ngamone})\P\left(\Vn \le \ceil{\ngamone}\right)$ with $N=n-\ceil{\ngam}$ and $a=\ceil{\ngam}$ is a uniform lower bound for $\qingamt$. Now
\begin{align}
\Big|\qLgam- \pipo(\lw \mu_I)\Big| & = \Big|\P\left(\Vn \le \ceil{\ngamone}\right)\left(\theta_{N,a}\left(\ceil{\ngamone}\right)-1\right) \nonumber \\
  & \hspace*{3cm} +\P\left(\Vn \le \ceil{\ngamone}\right)-\pipo(\lw \mu_I)\Big| \nonumber \\
& \le \left|\theta_{N,a}(\ceil{\ngamone})-1\right|+\left|\P\left(\Vn \le \ceil{\ngamone}\right)-\pipo(\lw \mu_I)\right|. \label{qlntriangle}
\end{align}

Note that $\kappa_I$ is infinitely differentiable on $(0,\infty)$, so by Taylor's theorem,
\begin{equation}
\label{taylor}
\kappa_I(x)=-x \mu_I +x^2 O(1) \qquad \mbox{as }x \to 0.
\end{equation}
Hence,
\begin{equation*}
\lim_{n \to \infty}\sqrt{m_n}(\lambda_n-\lw \mu_I)=\lim_{n \to \infty}\sqrt{m_n}\left[\left(n\bwn-\lw\right)\mu_I+n\left(\bwn\right)^2 O(1)\right]=0,
\end{equation*}
using conditions (C1) and (C3).
Arguing as in the derivation of~\eqref{limit1} and~\eqref{limit3} in Section~\ref{appB1} now shows that
\begin{equation}
\label{mnvnpi}
\sqrt{m_n}\left|\P\left(\Vn \le \ceil{\ngamone}\right)-\pipo(\lw \mu_I)\right| \to 0 \qquad \mbox{as } n \to \infty.
\end{equation}
Further,
for $k=1,2,\cdots,N-1$,
\begin{equation*}
\theta_{N,a}(k)\ge\left(\frac{N-k}{N+a-k}\right)^k = \left(1-\frac{a}{N+a-k}\right)^k \ge 1-\frac{ka}{N+a-k},
\end{equation*}
so, substituting in $N=n-\ceil{\ngam}$ and $a=\ceil{\ngam}$,
\begin{equation}
\label{mntheta1}
\sqrt{m_n} \, \left|\theta_{N,a}(\ceil{\ngamone})-1\right| \le \sqrt{m_n} \frac{\ceil{\ngamone}\ceil{\ngam}}{n-\ceil{\ngamone}} \to 0 \qquad\mbox{as } n \to \infty,
\end{equation}
as $\gamma+\gamma_1 < \frac{\delta}{2}$ and $\delta<2$.  The limit~\eqref{lowerlim} follows using~\eqref{qlntriangle}, \eqref{mnvnpi} and~\eqref{mntheta1}.

\subsubsection{The upper bound $\qUgam$}
\label{sec:qU}

To find a lower bound for $\Ynhat(\ngam,t)$ (the number of infectives in $\En(t)$ when the cumulative number of infectives first reaches $\ceil{\ngam}$), consider the branching process $\BBn(t)$ introduced in Section~\ref{appB1}.  Its total size $\Bn(t)$ can be constructed in the well-known fashion from a random walk obtained by considering the offspring of individuals sequentially as follows.  Recall that $\Yn_0(t) \sim \Po\left(n^2 m_n \bgn t\right)$ is the number of ancestors in $\BBn(t)$ and let $\Yn_1,\Yn_2,\cdots$ be i.i.d.\ copies of the offspring distribution $\Po\left(n \bwn\right)$.  Let $\Xn_0(t)=\Yn_0(t)$ and, for $k=1,2,\cdots$, let $\Xn_k(t)=\Yn_0(t)+\sum_{i=1}^k \Yn_i$.  Suppressing the explicit dependence on $t$, let $\Wn=\inf\{k \ge 0: \Xn_{k}(t)-k=0\}$, where $\Wn=\infty$ if $\Xn_{k}(t)>k$ for all $k\ge 0$. Then $\Bn(t)=\Xn_{\Wn}(t)$.

Let $\muyn=\E\left[\Yn_1\right]=n\bwn \mu_I$ and for $\theta \in \mathbb{R}$, let
\begin{equation*}
M_n(\theta)=\E\left[\e^{\theta\left(\Yn_1-\muyn\right)}\right]=\e^{-n \bwn \mu_I \theta}\phi_I\left(n \bwn \left(1-\e^{\theta}\right)\right).
\end{equation*}
Recall that $\lim_{n\to\infty} n \bwn = \lw$ and (by condition (C4)) there exists $\theta_0<0$ such that $\phi_I(\theta_0)<\infty$.  Thus $M_n(\theta) \to M(\theta)=\e^{-\lw \mu_I \theta}\phi_I\left(\lw\left(1-\e^{\theta}\right)\right)$, as $n \to \infty$, in an interval containing the origin.  By a standard large-deviation argument (e.g.~\citet[p.~202]{Grim01}), for all $\zeta>0$, there exists $\theta_1>0$ such that $\e^{-\theta \zeta} M(\theta) \in (0,1)$ for all $\theta \in (0,\theta_1)$, so there exist $\theta_2>0$, $c_1 \in (0,1)$ and $n_0 \in \mathbb{N}$ such that $\e^{-\theta_2 \zeta} M_n(\theta_2) \le c_1$ for all $n \ge n_0$.  Hence, by Markov's inequality, for all $n\ge n_0$,
\begin{equation*}
\P\left(\Xn_{k}(t)-k\muyn \ge k \zeta\right) \le c_1^k\e^{n^2 m_n \bgn t\left(\e^{\theta_2}-1\right)}
\qquad (k=1,2,\cdots),
\end{equation*}
so, since $n^2 m_n \bgn \to \lg$ as $n \to \infty$, there exist $C_1>0$ and $n_1 \in \mathbb{N}$ such that, for all $n\ge n_1$,
\begin{equation}
\label{largedev1}
\P\left(\Xn_{k}(t)\ge k(\muyn+\zeta)\right) \le C_1 c_1^k \qquad (k=1,2,\cdots; 0 \le t \le T).
\end{equation}
Let $\mu_Y=\lim_{n \to \infty} \muyn = \lw \mu_I$, using~\eqref{infectionrates}.
Then, a similar argument shows that, for all $\zeta \in (0, \mu_Y)$, there exist $c_2 \in (0,1)$, $C_2>0$ and $n_2 \in \mathbb{N}$  such that, for all $n\ge n_2$,
\begin{equation}
\label{largedev2}
\P\left(\Xn_{k}(t)\le k(\muyn-\zeta)\right) \le C_2  c_2^k \qquad (k=1,2,\cdots; 0 \le t \le T).
\end{equation}

Suppose that $\gamma \in (0,\frac{\delta}{4})$.  Then, for any $\gamma' \in (0,\gamma)$, since $\ngam>\ngamd(\muyn+\zeta)$ for all sufficiently large $n$,~\eqref{largedev1} and condition (C1) imply that
\begin{equation}
\label{largedev3}
\lim_{n \to \infty} \sqrt{m_n} \sup_{0 \le t \le T}\P\left(X^{(n)}_{\ceil{\ngamd}}(t)>\ngam\right)=0.
\end{equation}
Similarly, for any $\gamma''>\gamma$, since for any $\zeta \in (0, \mu_Y)$, $\ngam \le \ngamdd(\muyn-\zeta)$ for all sufficiently large $n$,~\eqref{largedev2} and condition (C1) imply that
\begin{equation}
\label{largedev4}
\lim_{n \to \infty} \sqrt{m_n}\sup_{0 \le t \le T} \P\left(X^{(n)}_{\ceil{\ngamdd}}(t)< \ngam\right)=0.
\end{equation}
For $x >0$, let $\taun(x,t)=\min\{k \ge 0:\Xn_{k}(t) \ge x\}$.  Since $\Xn_{k}(t)$ is non-decreasing in $k$,
\begin{equation}
\label{largedev5}
\lim_{n \to \infty} \sqrt{m_n} \P\left(\taun(\ngam,t) \notin [\ngamd,\ngamdd] \right)=0.
\end{equation}
It follows immediately from~\eqref{largedev2} that, for all $n \ge n_2$,
\[
\P\left(\Xn_{k}(t)-k\le k(\muyn-\zeta-1)\right) \le C_2  c_2^k \qquad (k=1,2,\cdots; 0 \le t \le T).
\]
Note from~\eqref{R0def}, that $\mu_Y>1$, since $R_0>1$.  Thus, $\zeta$ can be chosen in $(0, \mu_Y-1)$.  Let $\gamma''' \in (0,\gamma')$.  Then, for such $\zeta$ and all sufficiently large $n$,  $\ngamddd<k(\muyn-\zeta-1)$ for all $k \ge \ngamd$, so there exists $n_3 \in \mathbb{N}$  such that, for all $n\ge n_3$,
\[
\P\left(\Xn_{k}(t)-k \le \ngamddd\right)\le C_2  c_2^{\ngamd} \qquad (k \ge \ngamd; 0 \le t \le T).
\]
Hence,
\[
\lim_{n \to \infty} \sqrt{m_n}\sup_{0 \le t \le T}\P\left(\Xn_{k}(t)-k< \ngamddd \mbox{ for some } k \in [\ngamd,\ngamdd]\right)=0,
\]
which, together with~\eqref{largedev5}, implies
\begin{equation}
\label{largedev6}
\lim_{n \to \infty} \sqrt{m_n}\P\left(\Xn_{k}(\taun(\ngam,t))-\taun(\ngam,t) < \ngamddd\right)=0.
\end{equation}

For $t \ge 0$, let $\Yn(\ngam,t)$ be the number of individuals alive in the above construction of $\Bn(t)$ when the total progeny first reaches $\ngam$, where $\Yn(\ngam,t)=0$ if $\Bn(t)<\ngam$.  Now
\[
\left\{\Yn(\ngam,t)< \ngamddd\right\}\cap \left\{\Bn(t) \ge \ngam\right\} \subseteq \left\{\Xn_{k}(\taun(\ngam,t))-\taun(\ngam,t) < \ngamddd\right\},
\]
so it follows from~\eqref{largedev6} that
\begin{equation}
\label{largedev7}
\lim_{n \to \infty} \sqrt{m_n}\sup_{0 \le t \le T}\P\left(\Yn(\ngam,t)< \ngamddd, \, \Bn(t) \ge \ngam \right)=0.
\end{equation}

A realisation of the total size $\Zn(t)$ of the epidemic $\En(t)$ can be constructed using the above random variables $\Yn_0(t),\Yn_1,\Yn_2,\cdots$ and the random variables $\chin_1, \chin_2,\cdots$ introduced near the start of Section~\ref{appB1}.  The construction proceeds in an analogous fashion to that used in Section~\ref{appB1}.  The $k$-th birth in the construction of $\Bn(t)$ (including the initial ancestors) is given the label is $\chin_k$ and corresponds to an infective in the construction of $\Zn(t)$ if and only if $\chin_k \notin \{\chin_1,\chin_2,\cdots,\chin_{k-1}\}$, otherwise that individual and its descendants are ignored in the construction of $\Zn(t)$.  As in the construction of $\Bn(t)$, the numbers of individuals infected by infectives are considered sequentially in the construction of $\Zn(t)$, yielding a process $\Xncheck_k(t)=\Yncheck_0(t)+\sum_{i=1}^k
\Yncheck_i$ $(k=0,1,\dots)$, where $\Yncheck_0(t)$ is the number of initial infectives in $\En(t)$ and for example, $\Yncheck_1$ is the number of people infected by the first infective considered in the construction of $\Zn(t)$.  Note that $\Zn(t)=\Xncheck_{\Wncheck}(t)$, where $\Wncheck=\inf\{k \ge 0: \Xncheck_k(t)-k=0\}$.  Let
\[
\Mn=\min\{k\ge 2:\chin_k \in \{\chin_1,\chin_2,\cdots,\chin_{k-1}\}\},
\]
as in Section~\ref{appB1}.  Then,
$\Xn_k(t)$ $(k=0,1,\cdots)$ and $\Xncheck_k(t)$ $(k=0,1,\cdots)$ coincide while $\Xn_k(t) < \Mn$.  Now $\lim_{n \to \infty}\sqrt{n}\P(\Mn \le \ngam)=0$ (cf.~\eqref{birthday}) and using~\eqref{largedev7}, a similar argument to the derivation of~\eqref{limit0} yields
\begin{equation}
\label{largedev8}
\lim_{n \to \infty} \sqrt{m_n}\sup_{0 \le t \le T}\P\left(\Ynhat(\ngam,t)< \ngamddd, \Zn(t) \ge \ngam \right)=0.
\end{equation}

Now $\png (t) = \P\left(\Zn (t) \ge \ngam \right)$ is increasing in $t$ and by Lemma~\ref{pn_znhat_convlemma}, $\png(t) \to p(t)$ as $n \to \infty$.  Hence, for any $T' \in(0, T)$, $\min_{T' \le t \le T}\png(t) >\frac{p(T')}{2}$ for all sufficiently large $n$.  In~\eqref{largedev8}, $\gamma'''<\gamma$ can be made arbitrarily close to $\gamma$.  Thus, it follows from~\eqref{largedev8} that, for any $\gamma'<\gamma<\frac{\delta}{4}$ and any $0<T'<T$,
\begin{equation}
\label{currentinf}
\lim_{n \to \infty}\sqrt{m_n}\sup_{T' \le t \le T}\P\left(\Ynhat(\ngam,t) < \ngamd \mid \Zn(t) \ge \ngam \right)=0.
\end{equation}

To obtain a uniform upper bound for $\qingamt$, first choose $\gamma_2 \in (0, \frac{\delta}{4})$, $\gamma' \in (0,\gamma)$ and then $\gamma_3 \in (0,1)$ such that $\gamma'+\gamma_3>1$. In view of~\eqref{currentinf}, we may assume that there are at least $\ngamd$ current infectives when the cumulative number of infectives
in $\En$ reaches $\ceil{\ngam}$.  It then follows from~\eqref{pavoidbounds} that
\begin{equation}
\label{qinbound}
\qingamt \le \P\left(\Sn \le \floor{\ngamthree}\right)+\theta_{N,a}(\ceil{\ngamthree}),
\end{equation}
where $N=n-\ceil{\ngam}$ and $a=\floor{\ngamd}$. Here, $\Sn=\left|\SSn\right|$, where $\SSn$ denotes the susceptibility set of a typical individual, $i$ say, in a population of total size $N+a=n-\ceil{\ngam}+\floor{\ngamd}$.  We get an upper bound for $\P\left(\Sn \le \floor{\ngamthree}\right)$ by first coupling the susceptibility set $\SSn$ to a lower bounding branching process, to obtain an
upper bound for $\P\left(\Sn \le \ngamTwo \right)$, and then showing that, as as $n \to \infty$,
$\P\left(\Sn \le \floor{\ngamthree} \mid  \Sn > \ngamTwo\right) \to 0$ sufficiently quickly.

In this final section we need the following additional notation to describe various branching processes and their properties. We write $\BBin(n,p)$ and $\BPo(X)=\BB(X)$ for Galton-Watson branching processes with a single ancestor and offspring distributions which are respectively $\Bin(n,p)$ (for $n \in \mathbb{N}$ and $p \in (0,1)$) and $\mbox{Po}(X)$. (We now add the `Po' subscript to the notation $\BB(X)$ for the latter in order to aid clarity.) Recall also the notation $\pipo(x)$ for the extinction probability of $\BPo(x)$.

Note that whilst the size of $\SSn$ is less than $\ngamTwo$, the number of individuals that could join the susceptibility set is at least
$n-\ceil{\ngam}-\ceil{\ngamTwo}$ (in fact at least $n-\ceil{\ngam}-\ceil{\ngamTwo}+\floor{\ngamd}-1$ but the coarser bound is sufficient for our purposes) and the probability that an individual joins the susceptibility set is at least $1-q_{\floor{\ngamTwo}}^{(n)}$, so
$\P\left(\Sn \le \ngamTwo \right) \le \P\left(\Un \le \ngamTwo \right)$, where $\Un$ is the total progeny of $\BBin(N',p_n)$ with
$N'=n-\ceil{\ngam}-\ceil{\ngamTwo}$ and $p_n=1-q_{\floor{\ngamTwo}}^{(n)}$. Let $\lambda_n=-N'\log(1-p_n)$. A realisation of $\BBin(N',p_n)$ can be obtained from one of $\BPo(\lambda_n)$ as follows.  Suppose that an individual in $\BPo(\lambda_n)$ has $k>0$ offspring. Thinking of those $k$ offspring as balls, place those $k$ balls independently and uniformly into $N'$ boxes; then in $\BBin(N',p_n)$ the number of offspring is the number of non-empty boxes.  Let $\Vn$ denote the total progeny of
$\BPo(\lambda_n)$.  Then (cf.~\eqref{birthday}), if $\Vn \le \ngamTwo$, the probability that $\BPo(\lambda_n)$ and $\BBin(N',p_n)$ differ is at most
$d_n=\ngamTwo(\ngamTwo-1)/(n-\ceil{\ngam}-\ceil{\ngamTwo})$ and $\lim_{n \to \infty} \sqrt{m_n} d_n =0$, since $\gamma_2<\frac{\delta}{4}$ and $\delta < 2$.  Thus we may use
$\P\left(\Vn \le  \ngamTwo\right)$ as an `upper bound' for $\P\left(\Sn \le \ngamTwo \right)$.

Observe that
\begin{equation}
\label{lamnbdaneq}
\lambda_n=-\left(n-\ceil{\ngam}-\ceil{\ngamTwo}\right)\left[\kappa_I\left(\bwn(1+\floor{\ngamTwo})\right)-\kappa_I\left(\bwn\floor{\ngamTwo}\right)\right].
\end{equation}
Omitting the details, using~\eqref{taylor}, we have $\lim_{n \to \infty}\sqrt{m_n}(\lambda_n-\mu_I \lw)=0$ if \newline (a) $\lim_{n \to \infty}\sqrt{m_n}n^{\gamma-1}=0$, (b) $\lim_{n \to \infty}\sqrt{m_n}n^{\gamma_2-1}=0$, (c)  $\lim_{n \to \infty}\sqrt{m_n}(n\bwn-\lw)=0$ and
(d) $\lim_{n \to \infty}\sqrt{m_n}n(\bwn)^2 n^{2\gamma_2}=0$.  Conditions (a) and (b) follow immediately from (C1) since $2\gamma<\delta$ and
$2\gamma_2<\delta$, and condition (c) is precisely (C3).  Condition (d) follows from $\lim_{n\to \infty}n\bwn=\lw$ (see~\eqref{infectionrates}) and $4\gamma_2<\delta$.  As at~\eqref{mnvnpi}, the same arguments as in the derivation of~\eqref{limit1} and~\eqref{limit3} in Section~\ref{appB1} now show that
\begin{equation}
\label{Vnngam2bound}
\sqrt{m_n}\left|\P\left(\Vn \le  \ngamTwo\right)-\pipo(\lw \mu_I)\right| \to 0 \qquad \mbox{as } n \to \infty.
\end{equation}

By bounding $\SSn$ between the processes $\BBin(N',p_n)$ and  $\BBin(N',p'_n)$, where $p'_n=1-q_0^{(n)}$, similar arguments to the derivation of~\eqref{currentinf} show that the susceptibility set $\SSn$ can be constructed so that if and when its size reaches $\ngamTwo$ then, for any $\gamma_4 \in (0,\gamma_2)$ there at least $\ngamfour$ members whose offspring have yet to be explored.  Until the size of $\SSn$ reaches $\ngamthree$, $\SSn$ is bounded below by the branching process
$\BBin(N(n),p(n))$, where $N(n)=n-\ceil{\ngam}-\ceil{\ngamthree}$ and $p(n)=1-q_{\floor{\ngamthree}}^{(n)}$.  It is easily shown using the mean value theorem that
$\lim_{n \to \infty}N(n)p(n)=\lw \mu_I$.  Thus, as $n \to \infty$, the offspring distribution of $\BBin(N(n),p(n))$ converges in distribution to
$\Po(\lw \mu_I)$, so the extinction probability, $\pi_n$ say, of $\BBin(N(n),p(n))$ converges to $\pipo(\lw \mu_I)$. Now $\pipo(\lw \mu_I)<1$, so there exists $\pi_0\in (\pipo(\lw \mu_I),1)$ and $n_0 \in \mathbb{N}$ such that $\pi_n<\pi_0$ for all $n \ge n_0$.  It follows that, for all $n \ge n_0$,
\begin{equation*}
P\left(\Sn \le \floor{\ngamthree} \mid  \Sn \ge \ngamTwo\right)\le (\pi_0)^{\ngamfour},
\end{equation*}
so, in view of~\eqref{qinbound} and the discussion just prior to~\eqref{lamnbdaneq}, we may take
\begin{equation}
\label{qugameq}
\qUgam= \P\left(\Vn \le  \ngamTwo\right)+(\pi_0)^{\ngamfour}+
\theta_{N,a}(\ceil{\ngamthree}).
\end{equation}
(Note that we cannot omit the term involving $\pi_0$ in~\eqref{qugameq} and replace $\gamma_2$ by $\gamma_3$ because conditions (b) and (d) following~\eqref{lamnbdaneq} may not hold with $\gamma_2$ replaced by $\gamma_3$.)

Now $\sqrt{m_n}(\pi_0)^{\ngamfour}\to 0 $ as $n \to \infty$, as $\pi_0 \in (0,1)$ and $m_n < n^2$ for all sufficiently large $n$.  Thus,
in view of~\eqref{Vnngam2bound}, to show~\eqref{upperlim} and thereby complete the proof of~\eqref{znhatconv} we just have to show that $\sqrt{m_n}\theta_{N,a}(\ceil{\ngamthree})\to 0 $ as $n \to \infty$.  Recall that $N=n-\ceil{\ngam}$ and $a=\floor{\ngamd}$.  Thus,
\begin{align*}
 \sqrt{m_n}\theta_{N,a}(\ceil{\ngamthree}) & \le \sqrt{m_n}\left(\frac{n}{n+\floor{\ngamd}}\right)^{\ceil{\ngamthree}}\\
 & =\sqrt{m_n}\left(1-\frac{\floor{\ngamd}}{n+\floor{\ngamd}}\right)^{\ceil{\ngamthree}}\\
 & \le \sqrt{m_n}\exp\left(-\frac{\floor{\ngamd}\ceil{\ngamthree}}{n+\floor{\ngamd}}\right) \to 0\qquad\mbox{as } n \to \infty,
\end{align*}
since $\gamma'+\gamma_3>1$ and $m_n < n^2$ for all sufficiently large $n$.
%\section*{???}%% if no title is needed, leave empty \section*{}.
%\end{appendix}
%%%%%%%%%%%%%%%%%%%%%%%%%%%%%%%%%%%%%%%%%%%%%%
%% Multiple Appendixes:                     %%
%%%%%%%%%%%%%%%%%%%%%%%%%%%%%%%%%%%%%%%%%%%%%%
%\begin{appendix}
%\section{???}
%
%\section{???}
%
\end{appendix}

%%%%%%%%%%%%%%%%%%%%%%%%%%%%%%%%%%%%%%%%%%%%%%
%% Support information, if any,             %%
%% should be provided in the                %%
%% Acknowledgements section.                %%
%%%%%%%%%%%%%%%%%%%%%%%%%%%%%%%%%%%%%%%%%%%%%%
\begin{acks}[Acknowledgments]
The authors thank an anonymous referee for their very careful reading of our paper and constructive comments, which have improved its presentation.
\end{acks}
%%%%%%%%%%%%%%%%%%%%%%%%%%%%%%%%%%%%%%%%%%%%%%
%% Funding information, if any,             %%
%% should be provided in the                %%
%% funding section.                         %%
%%%%%%%%%%%%%%%%%%%%%%%%%%%%%%%%%%%%%%%%%%%%%%
\begin{funding}
This work was partially supported by a grant from the Simons Foundation and was carried out as a result of the authors' visit to the Isaac Newton Institute
for Mathematical Sciences during the programme Theoretical Foundations for Statistical Network Analysis in 2016 (EPSRC Grant Number EP/K032208/1).
The third author was supported by Vetenskapsr{\aa}det (Swedish Research Council), grant 2016-04566.
This work was also supported by a grant from the Knut and Alice Wallenberg Foundation, which enabled the first author to be a guest professor at the Department of Mathematics, Stockholm University.

\end{funding}

%%%%%%%%%%%%%%%%%%%%%%%%%%%%%%%%%%%%%%%%%%%%%%
%% Supplementary Material, including data   %%
%% sets and code, should be provided in     %%
%% {supplement} environment with title      %%
%% and short description. It cannot be      %%
%% available exclusively as external link.  %%
%% All Supplementary Material must be       %%
%% available to the reader on Project       %%
%% Euclid with the published article.       %%
%%%%%%%%%%%%%%%%%%%%%%%%%%%%%%%%%%%%%%%%%%%%%%
%\begin{supplement}
%\stitle{???}
%\sdescription{???.}
%\end{supplement}

%%%%%%%%%%%%%%%%%%%%%%%%%%%%%%%%%%%%%%%%%%%%%%%%%%%%%%%%%%%%%
%%                  The Bibliography                       %%
%%                                                         %%
%%  imsart-???.bst  will be used to                        %%
%%  create a .BBL file for submission.                     %%
%%                                                         %%
%%  Note that the displayed Bibliography will not          %%
%%  necessarily be rendered by Latex exactly as specified  %%
%%  in the online Instructions for Authors.                %%
%%                                                         %%
%%  MR numbers will be added by VTeX.                      %%
%%                                                         %%
%%  Use \cite{...} to cite references in text.             %%
%%                                                         %%
%%%%%%%%%%%%%%%%%%%%%%%%%%%%%%%%%%%%%%%%%%%%%%%%%%%%%%%%%%%%%

%% if your bibliography is in bibtex format, uncomment commands:
\bibliographystyle{imsart-nameyear}
\bibliography{publicationsA}

\newcommand{\SortNoop}[1]{}\providecommand{\noopsort}[1]{}
\begin{thebibliography}{44}
% BibTex style file: imsart-nameyear.bst, 2017-11-03
% Default style options (sort=1,type=nameyear).
% Used options (sort=1,type=nameyear).

\bibitem[\protect\citeauthoryear{Abbe}{2018}]{Abbe2018}
\begin{barticle}[author]
\bauthor{\bsnm{Abbe},~\bfnm{Emmanuel}\binits{E.}}
(\byear{2018}).
\btitle{Community Detection and Stochastic Block Models}.
\bjournal{Found. Trends Commun. Inf. Theory}
\bvolume{14}
\bpages{1--162}.
\end{barticle}
\endbibitem

\bibitem[\protect\citeauthoryear{Aizenman and Newman}{1986}]{AizNew1986}
\begin{barticle}[author]
\bauthor{\bsnm{Aizenman},~\bfnm{M.}\binits{M.}} \AND
  \bauthor{\bsnm{Newman},~\bfnm{C.~M.}\binits{C.~M.}}
(\byear{1986}).
\btitle{Discontinuity of the percolation density in one-dimensional {$1/|x-
  y|^2$} percolation models}.
\bjournal{Comm. Math. Phys.}
\bvolume{107}
\bpages{611--647}.
\bmrnumber{868738}
\end{barticle}
\endbibitem

\bibitem[\protect\citeauthoryear{Andersson and Britton}{2000}]{AndBri2000}
\begin{bbook}[author]
\bauthor{\bsnm{Andersson},~\bfnm{H.}\binits{H.}} \AND
  \bauthor{\bsnm{Britton},~\bfnm{T.}\binits{T.}}
(\byear{2000}).
\btitle{Stochastic epidemic models and their statistical analysis}.
\bseries{Lecture Notes in Statistics}
\bvolume{151}.
\bpublisher{Springer-Verlag}, \baddress{New York}.
\bmrnumber{MR1784822 (2001j:92052)}
\end{bbook}
\endbibitem

\bibitem[\protect\citeauthoryear{Bailey}{1975}]{Bailey75}
\begin{bbook}[author]
\bauthor{\bsnm{Bailey},~\bfnm{N.~T.~J.}\binits{N.~T.~J.}}
(\byear{1975}).
\btitle{The Mathematical Theory of Infectious Diseases and its Applications},
\bedition{Second} ed.
\bpublisher{Griffin}, \baddress{London}.
\end{bbook}
\endbibitem

\bibitem[\protect\citeauthoryear{Ball}{1986}]{Ball86}
\begin{barticle}[author]
\bauthor{\bsnm{Ball},~\bfnm{Frank}\binits{F.}}
(\byear{1986}).
\btitle{A unified approach to the distribution of total size and total area
  under the trajectory of infectives in epidemic models}.
\bjournal{Adv. in Appl. Probab.}
\bvolume{18}
\bpages{289--310}.
\end{barticle}
\endbibitem

\bibitem[\protect\citeauthoryear{Ball}{2021}]{Ball21}
\begin{barticle}[author]
\bauthor{\bsnm{Ball},~\bfnm{F.}\binits{F.}}
(\byear{2021}).
\btitle{Central limit theorems for SIR epidemics and percolation on
  configuration model random graphs}.
\bjournal{Ann. Appl. Probab.}
\bvolume{31}
\bpages{2091--2142}.
\end{barticle}
\endbibitem

\bibitem[\protect\citeauthoryear{Ball and Britton}{2005}]{Ball05}
\begin{barticle}[author]
\bauthor{\bsnm{Ball},~\bfnm{Frank}\binits{F.}} \AND
  \bauthor{\bsnm{Britton},~\bfnm{Tom}\binits{T.}}
(\byear{2005}).
\btitle{An epidemic model with exposure-dependent severities}.
\bjournal{J. Appl. Probab.}
\bvolume{42}
\bpages{932--949}.
\end{barticle}
\endbibitem

\bibitem[\protect\citeauthoryear{Ball and Clancy}{1993}]{BallClancy93}
\begin{barticle}[author]
\bauthor{\bsnm{Ball},~\bfnm{F.}\binits{F.}} \AND
  \bauthor{\bsnm{Clancy},~\bfnm{D.}\binits{D.}}
(\byear{1993}).
\btitle{The final size and severity of a generalised stochastic multitype
  epidemic model}.
\bjournal{Adv. Appl. Probab.}
\bvolume{25}
\bpages{721--736}.
\end{barticle}
\endbibitem

\bibitem[\protect\citeauthoryear{Ball and Donnelly}{1995}]{Ball95}
\begin{barticle}[author]
\bauthor{\bsnm{Ball},~\bfnm{Frank}\binits{F.}} \AND
  \bauthor{\bsnm{Donnelly},~\bfnm{Peter}\binits{P.}}
(\byear{1995}).
\btitle{Strong approximations for epidemic models}.
\bjournal{Stochastic Process. Appl.}
\bvolume{55}
\bpages{1--21}.
\end{barticle}
\endbibitem

\bibitem[\protect\citeauthoryear{Ball and Lyne}{2001}]{BallLyne01}
\begin{barticle}[author]
\bauthor{\bsnm{Ball},~\bfnm{F.}\binits{F.}} \AND
  \bauthor{\bsnm{Lyne},~\bfnm{O.~D.}\binits{O.~D.}}
(\byear{2001}).
\btitle{Stochastic multitype SIR epidemics among a population partitioned into
  households}.
\bjournal{Adv. Appl. Probab.}
\bvolume{33}
\bpages{99--123}.
\end{barticle}
\endbibitem

\bibitem[\protect\citeauthoryear{Ball, Mollison and
  Scalia-Tomba}{1997}]{Ball97}
\begin{barticle}[author]
\bauthor{\bsnm{Ball},~\bfnm{Frank}\binits{F.}},
  \bauthor{\bsnm{Mollison},~\bfnm{Denis}\binits{D.}} \AND
  \bauthor{\bsnm{Scalia-Tomba},~\bfnm{Gianpaolo}\binits{G.}}
(\byear{1997}).
\btitle{Epidemics with two levels of mixing}.
\bjournal{Ann. Appl. Probab.}
\bvolume{7}
\bpages{46--89}.
\end{barticle}
\endbibitem

\bibitem[\protect\citeauthoryear{Ball and Neal}{2002}]{Ball02}
\begin{barticle}[author]
\bauthor{\bsnm{Ball},~\bfnm{Frank}\binits{F.}} \AND
  \bauthor{\bsnm{Neal},~\bfnm{Peter}\binits{P.}}
(\byear{2002}).
\btitle{A general model for stochastic {SIR} epidemics with two levels of
  mixing}.
\bjournal{Math. Biosci.}
\bvolume{180}
\bpages{73--102}.
\end{barticle}
\endbibitem

\bibitem[\protect\citeauthoryear{Ball, Sirl and Trapman}{2009}]{Ball09}
\begin{barticle}[author]
\bauthor{\bsnm{Ball},~\bfnm{Frank}\binits{F.}},
  \bauthor{\bsnm{Sirl},~\bfnm{David}\binits{D.}} \AND
  \bauthor{\bsnm{Trapman},~\bfnm{P.}\binits{P.}}
(\byear{2009}).
\btitle{Threshold behaviour and final outcome of an epidemic on a random
  network with household structure}.
\bjournal{Adv. in Appl. Probab.}
\bvolume{41}
\bpages{765--796}.
\end{barticle}
\endbibitem

\bibitem[\protect\citeauthoryear{Barbour and Mollison}{1990}]{BarMol1990}
\begin{bincollection}[author]
\bauthor{\bsnm{Barbour},~\bfnm{Andrew}\binits{A.}} \AND
  \bauthor{\bsnm{Mollison},~\bfnm{Denis}\binits{D.}}
(\byear{1990}).
\btitle{Epidemics and random graphs}.
In \bbooktitle{Stochastic Processes in Epidemic Theory}
\bpages{86--89}.
\bpublisher{Springer}.
\end{bincollection}
\endbibitem

\bibitem[\protect\citeauthoryear{Billingsley}{1968}]{Bill68}
\begin{bbook}[author]
\bauthor{\bsnm{Billingsley},~\bfnm{Patrick}\binits{P.}}
(\byear{1968}).
\btitle{Convergence of probability measures}.
\bpublisher{John Wiley \& Sons}.
\end{bbook}
\endbibitem

\bibitem[\protect\citeauthoryear{Black et~al.}{2014}]{BlaHouKeeRos2014}
\begin{barticle}[author]
\bauthor{\bsnm{Black},~\bfnm{A.~J.}\binits{A.~J.}},
  \bauthor{\bsnm{House},~\bfnm{T.}\binits{T.}},
  \bauthor{\bsnm{Keeling},~\bfnm{M.~J.}\binits{M.~J.}} \AND
  \bauthor{\bsnm{Ross},~\bfnm{J.~V.}\binits{J.~V.}}
(\byear{2014}).
\btitle{The effect of clumped population structure on the variability of
  spreading dynamics}.
\bjournal{J. Theoret. Biol.}
\bvolume{359}
\bpages{45--53}.
\bmrnumber{3248418}
\end{barticle}
\endbibitem

\bibitem[\protect\citeauthoryear{Bollob\'{a}s, Janson and
  Riordan}{2007}]{BolJanRio2007}
\begin{barticle}[author]
\bauthor{\bsnm{Bollob\'{a}s},~\bfnm{B\'{e}la}\binits{B.}},
  \bauthor{\bsnm{Janson},~\bfnm{Svante}\binits{S.}} \AND
  \bauthor{\bsnm{Riordan},~\bfnm{Oliver}\binits{O.}}
(\byear{2007}).
\btitle{The phase transition in inhomogeneous random graphs}.
\bjournal{Random Structures Algorithms}
\bvolume{31}
\bpages{3--122}.
\end{barticle}
\endbibitem

\bibitem[\protect\citeauthoryear{Britton, Janson and
  Martin-L{\"o}f}{2007}]{Brit07}
\begin{barticle}[author]
\bauthor{\bsnm{Britton},~\bfnm{Tom}\binits{T.}},
  \bauthor{\bsnm{Janson},~\bfnm{Svante}\binits{S.}} \AND
  \bauthor{\bsnm{Martin-L{\"o}f},~\bfnm{Anders}\binits{A.}}
(\byear{2007}).
\btitle{Graphs with specified degree distributions, simple epidemics, and local
  vaccination strategies}.
\bjournal{Adv. in Appl. Probab.}
\bvolume{39}
\bpages{922--948}.
\end{barticle}
\endbibitem

\bibitem[\protect\citeauthoryear{Corless et~al.}{1996}]{CorlessEtal1996}
\begin{barticle}[author]
\bauthor{\bsnm{Corless},~\bfnm{R.~M.}\binits{R.~M.}},
  \bauthor{\bsnm{Gonnet},~\bfnm{G.~H.}\binits{G.~H.}},
  \bauthor{\bsnm{Hare},~\bfnm{D.~E.~G.}\binits{D.~E.~G.}},
  \bauthor{\bsnm{Jeffrey},~\bfnm{D.~J.}\binits{D.~J.}} \AND
  \bauthor{\bsnm{Knuth},~\bfnm{D.~E.}\binits{D.~E.}}
(\byear{1996}).
\btitle{On the Lambert $W$ function}.
\bjournal{Adv. in Comput. Math.}
\bvolume{5}
\bpages{329--359}.
\end{barticle}
\endbibitem

\bibitem[\protect\citeauthoryear{Daly}{1979}]{Daly79}
\begin{barticle}[author]
\bauthor{\bsnm{Daly},~\bfnm{Fergus}\binits{F.}}
(\byear{1979}).
\btitle{Collapsing supercritical branching processes}.
\bjournal{J. Appl. Probab.}
\bpages{732--739}.
\end{barticle}
\endbibitem

\bibitem[\protect\citeauthoryear{Dawson and Gorostiza}{2013}]{DawGor2013}
\begin{barticle}[author]
\bauthor{\bsnm{Dawson},~\bfnm{D.~A.}\binits{D.~A.}} \AND
  \bauthor{\bsnm{Gorostiza},~\bfnm{L.~G.}\binits{L.~G.}}
(\byear{2013}).
\btitle{Percolation in an ultrametric space}.
\bjournal{Electron. J. Probab.}
\bvolume{18}
\bpages{no. 12, 26}.
\bmrnumber{3035740}
\end{barticle}
\endbibitem

\bibitem[\protect\citeauthoryear{Diekmann, Heesterbeek and
  Britton}{2013}]{Diek13}
\begin{bbook}[author]
\bauthor{\bsnm{Diekmann},~\bfnm{Odo}\binits{O.}},
  \bauthor{\bsnm{Heesterbeek},~\bfnm{Hans}\binits{H.}} \AND
  \bauthor{\bsnm{Britton},~\bfnm{Tom}\binits{T.}}
(\byear{2013}).
\btitle{Mathematical Tools for Understanding Infectious Disease Dynamics}.
\bpublisher{Princeton University Press}.
\end{bbook}
\endbibitem

\bibitem[\protect\citeauthoryear{Durrett}{2010}]{Durrett2010}
\begin{bbook}[author]
\bauthor{\bsnm{Durrett},~\bfnm{Rick}\binits{R.}}
(\byear{2010}).
\btitle{Probability: Theory and Examples},
\bedition{Fourth} ed.
\bpublisher{Cambridge University Press}.
\end{bbook}
\endbibitem

\bibitem[\protect\citeauthoryear{Ethier and Kurtz}{1986}]{Ethi86}
\begin{bbook}[author]
\bauthor{\bsnm{Ethier},~\bfnm{Stewart~N}\binits{S.~N.}} \AND
  \bauthor{\bsnm{Kurtz},~\bfnm{Thomas~G}\binits{T.~G.}}
(\byear{1986}).
\btitle{{M}arkov processes: Characterization and convergence}.
\bpublisher{John Wiley \& Sons}.
\end{bbook}
\endbibitem

\bibitem[\protect\citeauthoryear{Grimmett and Stirzaker}{2001}]{Grim01}
\begin{bbook}[author]
\bauthor{\bsnm{Grimmett},~\bfnm{Geoffrey}\binits{G.}} \AND
  \bauthor{\bsnm{Stirzaker},~\bfnm{David}\binits{D.}}
(\byear{2001}).
\btitle{Probability and Random Processes}.
\bpublisher{Oxford University Press}.
\end{bbook}
\endbibitem

\bibitem[\protect\citeauthoryear{Jagers}{1975}]{Jage75}
\begin{bbook}[author]
\bauthor{\bsnm{Jagers},~\bfnm{Peter}\binits{P.}}
(\byear{1975}).
\btitle{Branching Processes with Biological Applications}.
\bpublisher{Wiley}, \baddress{New York}.
\end{bbook}
\endbibitem

\bibitem[\protect\citeauthoryear{Ludwig}{1975}]{Ludwig1975}
\begin{barticle}[author]
\bauthor{\bsnm{Ludwig},~\bfnm{D.}\binits{D.}}
(\byear{1975}).
\btitle{Final size distributions for epidemics}.
\bjournal{Math. Biosci.}
\bvolume{23}
\bpages{33--46}.
\bmrnumber{MR0403727 (53 \#\#7538)}
\end{barticle}
\endbibitem

\bibitem[\protect\citeauthoryear{Mossel, Neeman and Sly}{2015}]{MosNeeSly2015}
\begin{barticle}[author]
\bauthor{\bsnm{Mossel},~\bfnm{Elchanan}\binits{E.}},
  \bauthor{\bsnm{Neeman},~\bfnm{Joe}\binits{J.}} \AND
  \bauthor{\bsnm{Sly},~\bfnm{Allan}\binits{A.}}
(\byear{2015}).
\btitle{Reconstruction and estimation in the planted partition model}.
\bjournal{Probab. Theory Related Fields}
\bvolume{162}
\bpages{431--461}.
\bmrnumber{3383334}
\end{barticle}
\endbibitem

\bibitem[\protect\citeauthoryear{Nagaev and Startsev}{1968}]{NagStartsev68}
\begin{barticle}[author]
\bauthor{\bsnm{Nagaev},~\bfnm{A.~V.}\binits{A.~V.}} \AND
  \bauthor{\bsnm{Startsev},~\bfnm{A.~N.}\binits{A.~N.}}
(\byear{1968}).
\btitle{Threshold theorem for an epidemic model}.
\bjournal{Mathematical Notes of the Academy of Sciences of the USSR}
\bvolume{3}
\bpages{115--119}.
\end{barticle}
\endbibitem

\bibitem[\protect\citeauthoryear{Nagaev and Startsev}{1970}]{NagStartsev70}
\begin{barticle}[author]
\bauthor{\bsnm{Nagaev},~\bfnm{A.~V.}\binits{A.~V.}} \AND
  \bauthor{\bsnm{Startsev},~\bfnm{A.~N.}\binits{A.~N.}}
(\byear{1970}).
\btitle{The asymptotic analysis of a stochastic model of an epidemic}.
\bjournal{Theor. Prob. Appl.}
\bvolume{15}
\bpages{98--107}.
\end{barticle}
\endbibitem

\bibitem[\protect\citeauthoryear{Neal}{2006}]{Neal06}
\begin{barticle}[author]
\bauthor{\bsnm{Neal},~\bfnm{P.}\binits{P.}}
(\byear{2006}).
\btitle{Multitype randomized Reed--Frost epidemics and epidemics upon random
  graphs}.
\bjournal{Ann. Appl. Probab.}
\bvolume{16}
\bpages{1166--1189}.
\end{barticle}
\endbibitem

\bibitem[\protect\citeauthoryear{Newman}{2002}]{Newman02}
\begin{barticle}[author]
\bauthor{\bsnm{Newman},~\bfnm{M.~E.~J.}\binits{M.~E.~J.}}
(\byear{2002}).
\btitle{The spread of epidemic disease on networks}.
\bjournal{Phys. Rev. E.}
\bvolume{66}
\bpages{016128}.
\end{barticle}
\endbibitem

\bibitem[\protect\citeauthoryear{{\noopsort{bahr}}{von Bahr} and
  Martin-L{\"o}f}{1980}]{Bahr80}
\begin{barticle}[author]
\bauthor{\bsnm{{\noopsort{bahr}}{von Bahr}},~\bfnm{Bengt}\binits{B.}} \AND
  \bauthor{\bsnm{Martin-L{\"o}f},~\bfnm{Anders}\binits{A.}}
(\byear{1980}).
\btitle{Threshold limit theorems for some epidemic processes}.
\bjournal{Adv. in Appl. Probab.}
\bvolume{12}
\bpages{319--349}.
\end{barticle}
\endbibitem

\bibitem[\protect\citeauthoryear{{\noopsort{Hofstad}}{van der
  Hofstad}}{2021}]{vdH2021}
\begin{bunpublished}[author]
\bauthor{\bsnm{{\noopsort{Hofstad}}{van der Hofstad}},~\bfnm{Remco}\binits{R.}}
(\byear{2021}).
\btitle{Random Graphs and Complex Networks {II}}.
\bnote{https://www.win.tue.nl/\~{}rhofstad/NotesRGCNII.pdf}.
\end{bunpublished}
\endbibitem

\bibitem[\protect\citeauthoryear{Norros and Reittu}{2006}]{NorRei2006}
\begin{barticle}[author]
\bauthor{\bsnm{Norros},~\bfnm{Ilkka}\binits{I.}} \AND
  \bauthor{\bsnm{Reittu},~\bfnm{Hannu}\binits{H.}}
(\byear{2006}).
\btitle{On a conditionally {P}oissonian graph process}.
\bjournal{Adv. in Appl. Probab.}
\bvolume{38}
\bpages{59--75}.
\bmrnumber{2213964}
\end{barticle}
\endbibitem

\bibitem[\protect\citeauthoryear{Pellis, Ferguson and
  Fraser}{2008}]{PelFerFra2008}
\begin{barticle}[author]
\bauthor{\bsnm{Pellis},~\bfnm{L.}\binits{L.}},
  \bauthor{\bsnm{Ferguson},~\bfnm{N.~M.}\binits{N.~M.}} \AND
  \bauthor{\bsnm{Fraser},~\bfnm{C.}\binits{C.}}
(\byear{2008}).
\btitle{The relationship between real-time and discrete-generation models of
  epidemic spread}.
\bjournal{Math. Biosci.}
\bvolume{216}
\bpages{63--70}.
\bmrnumber{MR2474745}
\end{barticle}
\endbibitem

\bibitem[\protect\citeauthoryear{P\'{o}lya and Szeg\H{o}}{1978}]{PolSze1978}
\begin{bbook}[author]
\bauthor{\bsnm{P\'{o}lya},~\bfnm{George}\binits{G.}} \AND
  \bauthor{\bsnm{Szeg\H{o}},~\bfnm{G\'{a}bor}\binits{G.}}
(\byear{1978}).
\btitle{Problems and theorems in analysis. {I}}.
\bseries{Grundlehren der Mathematischen Wissenschaften [Fundamental Principles
  of Mathematical Sciences]}
\bvolume{193}.
\bpublisher{Springer-Verlag, Berlin-New York}
\bnote{Series, integral calculus, theory of functions, Translated from the
  German by D. Aeppli, Corrected printing of the revised translation of the
  fourth German edition}.
\bmrnumber{580154}
\end{bbook}
\endbibitem

\bibitem[\protect\citeauthoryear{Scalia-Tomba}{1985}]{Scal85}
\begin{barticle}[author]
\bauthor{\bsnm{Scalia-Tomba},~\bfnm{Gianpaolo}\binits{G.}}
(\byear{1985}).
\btitle{Asymptotic final-size distribution for some chain-binomial processes}.
\bjournal{Adv. in Appl. Probab.}
\bvolume{17}
\bpages{477--495}.
\end{barticle}
\endbibitem

\bibitem[\protect\citeauthoryear{Scalia-Tomba}{1986}]{Scal86}
\begin{barticle}[author]
\bauthor{\bsnm{Scalia-Tomba},~\bfnm{Gianpaolo}\binits{G.}}
(\byear{1986}).
\btitle{Asymptotic final size distribution of the multitype Reed--Frost
  process}.
\bjournal{J. of Appl. Probab.}
\bvolume{23}
\bpages{563--584}.
\end{barticle}
\endbibitem

\bibitem[\protect\citeauthoryear{Scalia-Tomba}{1990}]{Scal90}
\begin{bincollection}[author]
\bauthor{\bsnm{Scalia-Tomba},~\bfnm{Gianpaolo}\binits{G.}}
(\byear{1990}).
\btitle{On the asymptotic final size distribution of epidemics in heterogeneous
  populations}.
In \bbooktitle{Stochastic Processes in Epidemic Theory}
\bpages{189--196}.
\bpublisher{Springer}.
\end{bincollection}
\endbibitem

\bibitem[\protect\citeauthoryear{Sellke}{1983}]{Sell83}
\begin{barticle}[author]
\bauthor{\bsnm{Sellke},~\bfnm{Thomas}\binits{T.}}
(\byear{1983}).
\btitle{On the asymptotic distribution of the size of a stochastic epidemic}.
\bjournal{J. Appl. Probab.}
\bvolume{20}
\bpages{390--394}.
\end{barticle}
\endbibitem

\bibitem[\protect\citeauthoryear{Serfling}{1980}]{Serf80}
\begin{bbook}[author]
\bauthor{\bsnm{Serfling},~\bfnm{Robert~J}\binits{R.~J.}}
(\byear{1980}).
\btitle{Approximation theorems of mathematical statistics}
\bvolume{162}.
\bpublisher{John Wiley \& Sons}.
\end{bbook}
\endbibitem

\bibitem[\protect\citeauthoryear{\SortNoop{Vaart}van~der Vaart and
  Wellner}{1996}]{Vaar96}
\begin{bbook}[author]
\bauthor{\bparticle{\SortNoop{Vaart}van~der}
  \bsnm{Vaart},~\bfnm{Aad~W}\binits{A.~W.}} \AND
  \bauthor{\bsnm{Wellner},~\bfnm{John~A}\binits{J.~A.}}
(\byear{1996}).
\btitle{Weak Convergence and Empirical Processes with Application to
  Statistics}.
\bpublisher{Springer Verlag}.
\end{bbook}
\endbibitem

\bibitem[\protect\citeauthoryear{Watson}{1972}]{Watson1972}
\begin{barticle}[author]
\bauthor{\bsnm{Watson},~\bfnm{R.~K.}\binits{R.~K.}}
(\byear{1972}).
\btitle{On an epidemic in a stratified population}.
\bjournal{J. Appl. Probab.}
\bvolume{9}
\bpages{659--666}.
\bmrnumber{MR0350916 (50 \#\#3408)}
\end{barticle}
\endbibitem

\end{thebibliography}

%% or include bibliography directly:
% \begin{thebibliography}{}
% \bibitem{b1}
% \end{thebibliography}

\end{document}